\newtheorem{theorem}{Theorem}
\newtheorem{definition}[theorem]{Definition}
\newtheorem{proposition}[theorem]{Proposition}
\newtheorem{lemma}[theorem]{Lemma}
\newtheorem{corollary}[theorem]{Corollary}
\newtheorem{conjecture}[theorem]{Conjecture}
\newtheorem{question}[theorem]{Question}
\newtheorem{example}[theorem]{Example}
\theoremstyle{remark}
\newtheorem{remark}[theorem]{Remark}
\numberwithin{theorem}{section}
\title{Lie algebras in $\Ver_4^+$}
\author{Serina Hu}
\newcommand{\mcal}{\mathcal}
\newcommand{\mfrak}{\mathfrak}
\newcommand{\mbb}{\mathbb}
\newcommand{\1}{\mathds{1}}
\newcommand{\on}{\operatorname}
\newcommand{\inj}{\hookrightarrow}
\DeclareMathOperator{\Ver}{Ver}
\DeclareMathOperator{\Vect}{Vect}
\DeclareMathOperator{\End}{End}
\DeclareMathOperator{\Hom}{Hom}
\DeclareMathOperator{\im}{im}
\DeclareMathOperator{\id}{id}
\DeclareMathOperator{\Sym}{Sym}
\DeclareMathOperator{\Lie}{Lie}
\DeclareMathOperator{\GL}{GL}
\DeclareMathOperator{\Dist}{Dist}
\DeclareMathOperator{\gr}{gr}
\DeclareMathOperator{\FOLie}{FOLie}
\DeclareMathOperator{\coev}{coev}
\DeclareMathOperator{\diag}{diag}
\DeclareMathOperator{\ad}{ad}
\DeclareMathOperator{\Tr}{Tr}
\newcommand{\assign}{:=}
\begin{document}

\maketitle

\begin{abstract}
We develop Lie theory in the category $\Ver_4^+$ over a field of characteristic 2, the simplest tensor category which is not Frobenius exact, as a continuation of \cite{hu_supergroups_2024}. We provide a conceptual proof that an operadic Lie algebra in $\Ver_4^+$ is a Lie algebra, i.e. satisfies the PBW theorem, exactly when its invariants form a usual Lie algebra. % $\ker d$ is a classical Lie algebra. 
We then classify low-dimensional Lie algebras in $\Ver_4^+$, construct elements in the center of $U(\mfrak{gl}(X))$ for $X \in \Ver_4^+$, and study representations of $\mfrak{gl}(P)$, where $P$ is the indecomposable projective of $\Ver_4^+$.
\end{abstract}

\tableofcontents

\section{Introduction}

Let $k$ be an algebraically closed field of characteristic 2. The symmetric tensor category $\Ver_4^+$ is the tensor category of modules over the Hopf algebra $k[d]/d^2$ with $d$ primitive, with commutativity defined by the $R$-matrix $R:=1\otimes 1+d\otimes d$ (\cite{venkatesh_hilbert_2016}). \footnote{Note that this category has implicitly appeared much earlier than the referenced papers on Verlinde categories in the context of homotopy theory, see \cite{etingof_p-adic_2020}, Remark 3.4.} This category has one simple object, $\1$, with indecomposable projective cover $P = k[d]/d^2$.

In \cite{hu_supergroups_2024}, we studied the representation theory of general linear groups in $\Ver_4^+$. There we defined highest weights for representations of affine group schemes of finite type and classified the irreducible representations of general linear groups. In this paper, we develop some general theory around Lie algebras and representation theory of general linear Lie algebras in the category $\Ver_4^+$,  Since $\Ver_4^+$ is a reduction of the category of supervector spaces to characteristic $2$ (\cite{venkatesh_hilbert_2016}, Subsection 2.3), these Lie algebras may be viewed as Lie superalgebras in characteristic $2$, and more generally Lie theory in $\Ver_4^+$ may be regarded as a nontrivial version of superLie theory in characteristic $2$ (the trivial one being ordinary Lie theory, as there are no signs). More precisely, every object in $\Ver_4^+$ has the form $m \cdot \1 +nP$ where $P$ is the indecomposable projective, and $m\cdot \1+nP$ is the reduction to characteristic $2$ of $k^{(m+n|n)}$. 

We note that there are other nontrivial versions of superLie theory in characteristic $2$, 
for example the one discussed in \cite{bouarroudj_vectorial_2020} Subsection 1.2.3 and \cite{bouarroudj_classification_2023} Subsection 2.2. In this version, Lie superalgebras are defined as $\mathbb Z/2 \mathbb{Z}$-graded Lie algebras, with an additional structure (the squaring map), whereas in our setting Lie superalgebras have no $\mathbb Z/2 \mathbb{Z}$-grading, and instead have a derivation $d$ such that $d^2=0$. We discuss the relationship between these two theories in an upcoming paper.

The organization of the paper is as follows. In Section 2, we give a brief overview of Lie theory in symmetric tensor categories.

In Section 3, we prove that operadic Lie algebras in $\Ver_4^+$ are PBW iff $\ker d$ is a classical Lie algebra, i.e. $[x, x] = 0$ when $x \in \ker d$, giving a notion of Lie algebras in $\Ver_4^+$. This was proved computationally in \cite{kaufer_superalgebra_2018}, but our proof relies on the Koszul deformation principle.

In Section 4, we describe low-dimensional Lie algebras by describing a framework to classify Lie algebra structures on $m \cdot \1 + n P$. We then classify Lie algebra structures on $P$, $\1 + P$, $2 \cdot \1 + P$, and $2P$.

In Sections 5 and 6, we construct elements in the center of $U(\mfrak{gl}(X))$, $X \in \Ver_4^+$, by conjecturing an analog of the $p$-center and computing Casimir elements. We note that even for $X = P$, such elements do not generate the entire center of $U(\mfrak{gl}(P))$, indicating a need to develop this theory further.

In Section 7, we describe representations of $\mathfrak{gl}(P)$ and their restrictions to representations of $\GL(P)$.

\subsection{Acknowledgments}
I am deeply grateful to my advisor, Pavel Etingof, for both suggesting the problems in this paper and providing a huge amount of insightful advice on how to approach both the proofs and general ways of thinking about these problems. I am also grateful to Arun Kannan, Alex Sherman, and Kevin Coulembier for our helpful conversations about the Verlinde categories as well as Andrew Snowden and Karthik Ganapathy for asking many excellent questions about $\Ver_4^+$ during a seminar talk on this paper. This work was partially supported by NSF grant DMS-2001318. 

\section{Preliminaries}
% TODO: notation?
% define Ver_4^+, commutative algebra, group schemes and how to think about them (Hopf algebra or as functor of points (representable functor CommAlg -> Grp)), ind-algebras

In this section, we review preliminaries for working with Lie algebras in an arbitrary symmetric tensor category $\mcal{C}$, as described in \cite{venkatesh_harish-chandra_2022} and \cite{etingof_koszul_2018}.

We refer to \cite{hu_supergroups_2024} for an overview of symmetric tensor categories, the category $\Ver_4^+$, and its relationship to the category of supervector spaces. For notational ease, we will denote $dx$ by $x'$.

\begin{definition}
    The Lie operad $\mathbf{Lie} = \bigoplus_{n \ge 1} \mathbf{Lie}_n$ is the linear operad over $\mbb{Z}$ generated by an antisymmetric element $b \in \mathbf{Lie}(2)$ (the bracket) with defining relation
    \begin{equation*}
        b \circ (b \otimes 1) \circ (\id + (123) + (132)) = 0
    \end{equation*}
    in $\mathbf{Lie}(3)$ (the Jacobi identity). 
\end{definition}

\begin{definition}
An \emph{operadic Lie algebra} in a symmetric tensor category is an (ind)-object $L$ equipped with the structure of a $\mathbf{Lie}$ algebra. That is, it is equipped with a skew-symmetric bracket $\beta: L \otimes L \to L$, i.e.
\begin{equation*}
    \beta \circ (\id + (12)) = 0
\end{equation*}
such that $\beta$ satisfies the Jacobi identity, i.e.
\begin{equation*}
    \beta \circ (\beta \otimes \id) \circ (\id + (123) + (132)) = 0
\end{equation*}
where $(12)_{X \otimes Y} := c_{X \otimes Y}$ and $(123), (132)$ are the cyclic permutations on $X \otimes Y \otimes Z$ obtained by repeatedly applying the braiding. Let $\Lie(\mcal{C})$ be the category of operadic Lie algebras in $\mcal{C}$.
\end{definition}
\begin{remark}
    We call this an operadic Lie algebra instead of a ``Lie algebra'' because $L$ is not required to satisfy $[x, x]=0$, only $[x, y] = -[y, x]$, and these are not equivalent in characteristic 2, even in $\text{Vec}$. In particular, in characteristic 2 over vector spaces, an operadic Lie algebra will not satisfy the PBW theorem unless additionally $[x, x] = 0$.
\end{remark}

\begin{example}
    If $A$ is an associative algebra in $\mcal{C}$, it has a natural structure of an operadic Lie algebra via the commutator.
\end{example}

\begin{definition}
The \emph{universal enveloping algebra} $U(L)$ of an operadic Lie algebra $L$ is the (ind)-algebra $TL/(id - c - \beta)$, where $TL$ is the tensor algebra of $L$ and $c$ is the braiding of $\mcal{C}$..
\end{definition}

\begin{definition}
    The free operadic Lie algebra $\FOLie(V)$ associated to any object $V \in \mcal{C}$ is defined as follows: $\FOLie$ is the left adjoint to the forgetful functor from the category of operadic Lie algebras in $\mcal{C}$ to $\mcal{C}$; that is,
    \begin{equation*}
        \Hom_{\rm{Lie}}(\FOLie(V), L) = \Hom_{\mcal{C}}(V, L).
    \end{equation*}
    Concretely,
    \begin{equation*}
        \FOLie(V) = \bigoplus_{n \ge 1} \FOLie_n(V), \text{ } \FOLie_n(V) = (V^{\otimes n} \otimes \mathbf{Lie}_n)_{S_n}
    \end{equation*}
    where the subscript means the $S_n$-coinvariants.
\end{definition}

\begin{definition}
    Let $\phi^V: \FOLie(V) \to TV$ be the map of operadic Lie algebras induced by the degree 1 inclusion $V \inj TV$. (Note that $TV$ is an operadic Lie algebra because it is an associative algebra.) Let
    \begin{equation*}
        \phi^V = \bigoplus_{n \ge 1} \phi_n^V, \text{ where } \phi_n^V : \FOLie_n(V) \to V^{\otimes n}.
    \end{equation*}
    This map is not injective in general; let
    \begin{equation*}
        E_n(V) := \ker \phi_n^V.
    \end{equation*}
\end{definition}

Following the notation in \cite{etingof_koszul_2018}, we can define a Lie algebra as follows:

\begin{definition}
    We say that an operadic Lie algebra $L$ is a Lie algebra if the natural map $\beta^L: \FOLie(L) \to L$ induced by $\id: L \to L$ vanishes on $E_n(L)$.
\end{definition}

The vanishing of $\beta^L$ on $E_n(L)$ is a necessary condition for the PBW theorem to hold for $L$, and it is sufficient in Frobenius exact categories (\cite{etingof_koszul_2018}, Section 7).

\begin{example}
Affine group schemes have associated Lie algebras: $\mcal{O}(G)$ has an augmentation ideal $I = \ker \epsilon_G$, the counit of $\epsilon_G$. Its distribution algebra is the ind-object
\begin{equation*}
    \Dist(G) = \bigcup_{n = 0}^\infty (\mcal{O}(G)/I^n)^*,
\end{equation*}
and the Lie algebra $\mfrak{g}$ of $G$ is $(I/I^2)^* \subset \Dist(G)$.
\end{example}

We will prove in Section \ref{pbw_theorem} that operadic Lie algebras in $\Ver_4^+$ satisfying $[x, x]= 0 $ when $dx = 0$ are Lie algebras in this sense. That is, an operadic Lie algebra in $\Ver_4^+$ is a Lie algebra exactly when $\ker d$ is a Lie algebra in $\text{Vec}$. We also show that the PBW theorem holds for an operadic Lie algebra $L$ under the above condition, i.e. the natural map $\eta: \Sym L \to \gr U(L)$ is an isomorphism.

Determining whether the PBW theorem holds for operadic Lie algebras in a general symmetric tensor category is rather involved and discussed in \cite{etingof_koszul_2018}, but it is much easier if the underlying object is Koszul.

Recall the definition of the Koszul complex of an object $X$ in a symmetric tensor category as given in \cite{etingof_koszul_2018}.
\begin{definition}
  Let $\Lambda^2 X$ be the image of $c_{X, X} - 1$ and $S X = T X / (\Lambda^2 X)$.
\end{definition}

\begin{definition}
  Let $\mathbb{S}^2 X$ be the kernel of $c_{X, X} - 1$, or $(S^2
  X^{*})^{*}$, and $\wedge X = T X /(\mathbb{S}^2 X)$.
\end{definition}

\begin{definition}
  Let $\Lambda^n X = (\wedge^n X^{*})^{*}$, i.e. which can be identified with the sign-twisted $S_n$ invariants $\bigcap \operatorname{im} c_i - 1 \subset X^{\otimes n}$, where $c_i$ is the braiding swapping the $i$th and $i+1$th factors.
\end{definition}

There are multiplication maps $\mu : X \otimes S^j X \rightarrow S^{j + 1} X$
and $X^{*} \otimes \wedge^{i - 1} X^{*} \rightarrow \wedge^i X^{*}$;
dualizing the second gives us a map $\mu_{*} : X^{*} \otimes \Lambda^i X
\rightarrow \Lambda^{i - 1} X$ (evaluation). 
\begin{definition}[\cite{etingof_koszul_2018}]
  The Koszul complex of an object $X$ is $K^{\bullet} (X) = S X \otimes
  \Lambda^{\bullet} X$, and the differential is given by
  \begin{equation*}
  \partial : S X \otimes \Lambda^i X \xrightarrow{\coev_X} X \otimes
     X^{*} \otimes S X \otimes \Lambda^i X \xrightarrow{P_{2, 3}} X \otimes
     S X \otimes X^{*} \otimes \Lambda^i X \xrightarrow{\mu \otimes
     \mu_{*}} S X \otimes \Lambda^{i - 1} X.
     \end{equation*}
     If the Koszul complex of $X$ is exact, we say that $X$ is Koszul.
\end{definition}

By Theorem 4.11 in \cite{etingof_koszul_2018}, if the underlying object of an operadic Lie algebra $L$ is Koszul, the following are equivalent:
\begin{itemize}
    \item $L$ is PBW
    \item $L$ is a Lie algebra
    \item $\beta^L$ vanishes on $E_2(L)$ and $E_3(L)$.
\end{itemize}
We show that all objects in $\Ver_4^+$ are Koszul, hence we can use this theorem to characterize Lie algebras in $\Ver_4^+$.

\section{Lie algebras in $\Ver_4^+$ and the PBW theorem}\label{pbw_theorem}

In this section, we prove the PBW theorem for any operadic Lie algebra $\mfrak{g}$ in $\Ver_4^+$ satisfying $[x, x] = 0$ when $x' = 0$ for $x \in \mfrak{g}$.

First, we show that $P$ is Koszul. 
\begin{proposition}
\label{p_koszul}
    $P$ has exact Koszul complex, so it is Koszul.
\end{proposition}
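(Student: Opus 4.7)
The plan is to compute $SP$, $\Lambda^\bullet P$, and the Koszul differential explicitly in a chosen basis, and then to verify exactness by exhibiting a contracting homotopy on the underlying chain complex of vector spaces; since the forgetful functor $\Ver_4^+ \to \Vect$ is exact, this suffices.

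First, I would fix a basis $\{v, v'\}$ of $P$ with $v' = dv$. The braiding is then $c_{P, P}(a \otimes b) = b \otimes a + (db) \otimes (da)$, and a direct computation shows that both the image and the kernel of $c - 1$ on $P \otimes P$ equal the $2$-dimensional subspace spanned by $v' \otimes v'$ and $v \otimes v' + v' \otimes v$. Hence $\Lambda^2 P = \mbb{S}^2 P$, so $SP$ and $\wedge P$ coincide as quadratic algebras, presented by $(v')^2 = 0$ together with $v v' = v' v$; this yields $\dim S^n P = 2$ for $n \ge 1$ with basis $\{v^n, v^{n-1} v'\}$. Since $P \cong P^*$ and $\wedge^n P = S^n P$ by this same coincidence, duality forces $\dim \Lambda^n P = 2$, and one checks directly that $(v')^{\otimes n}$ and $T_n \assign \sum_{i=1}^n (v')^{\otimes(i-1)} \otimes v \otimes (v')^{\otimes(n-i)}$ lie in and hence span $\Lambda^n P$.

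Next I would unwind the definition of $\partial$ on these bases. The relevant comultiplication $\Lambda^j P \to P \otimes \Lambda^{j-1} P$ is just the inclusion $\Lambda^j P \hookrightarrow P \otimes P^{\otimes(j-1)}$ (whose image lies in $P \otimes \Lambda^{j-1} P$), and this gives four short formulas for $\partial$ on the four pairs of basis elements. From these, $\partial^2 = 0$ follows at once, the one nontrivial cancellation being a sum of two equal terms in characteristic~$2$. For exactness in positive total degree I would write down an explicit contracting homotopy $h \colon S^k P \otimes \Lambda^j P \to S^{k-1} P \otimes \Lambda^{j+1} P$ whose interior rule is to \emph{extract the trailing $v'$ from the $SP$ factor and insert it as the first tensor slot of the $\Lambda^\bullet P$ factor}: namely $h(v^{k-1} v' \otimes \tau) = v^{k-1} \otimes (v' \otimes \tau)$ (viewing $v' \otimes \tau$ as an element of $\Lambda^{j+1} P$), and $h(v^k \otimes \tau) = 0$ whenever $j \ge 1$.

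The main obstacle is the right-hand boundary column $S^\bullet P \otimes \Lambda^0 P$: the interior rule gives $h(v^k) = 0$, whereas exactness requires $\partial h(v^k) = v^k$, so one must instead set $h(v^k) = v^{k-1} \otimes v \in S^{k-1} P \otimes \Lambda^1 P$ (using that $T_1 = v$ under $\Lambda^1 P = P$). With this corner case adjusted, the identity $\partial h + h \partial = \id$ becomes a finite check across the four basis types and across the endpoint columns, and it yields exactness of the Koszul complex of $P$.
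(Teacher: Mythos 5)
Your setup agrees with the paper's: the same computation showing $\Lambda^2 P = \mathbb{S}^2 P$, the same bases ($v^n, v^{n-1}v'$ for $S^nP$; your $(v')^{\otimes n}$ and $T_n$ are precisely the paper's $\mathbf{t}_n$ and $\mathbf{x}_n$ inside $P^{\otimes n}$), and your identification of $\mu_*$ with contraction against the first tensor slot of $\Lambda^j P\subset P^{\otimes j}$ is correct. Where you diverge is the exactness argument: the paper computes $\ker\partial$ and $\im\partial$ directly from the explicit formula for $\partial$, whereas you propose a contracting homotopy (legitimate in principle, since a $k$-linear homotopy on the underlying complex of vector spaces suffices). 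However, the homotopy you wrote down is not well-defined: $v'\otimes\tau$ does not lie in $\Lambda^{j+1}P$ when $\tau$ has a $T_j$-component. Already for $j=1$, $v'\otimes T_1=v'\otimes v$ is not in $\Lambda^2P=\operatorname{span}(v'\otimes v',\ v\otimes v'+v'\otimes v)$; in general $v'\otimes T_j\notin\im(c_1-1)$ because the coefficient of $v'\otimes v\otimes(v')^{\otimes(j-1)}$ is $1$ while that of $v\otimes v'\otimes(v')^{\otimes(j-1)}$ is $0$. So "insert $v'$ in the first slot" only lands in $P\otimes\Lambda^jP$, which strictly contains $\Lambda^{j+1}P$, and the finite check of $\partial h+h\partial=\id$ cannot even be set up on half of your basis.

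The gap is repairable: the correct value is $h(v^{k-1}v'\otimes T_j)=v^{k-1}\otimes T_{j+1}$, i.e.\ one must add the correction term $v\otimes(v')^{\otimes j}$ to $v'\otimes T_j$ (equivalently: on the $\Lambda$-factor send $\mathbf{t}_j\mapsto\mathbf{t}_{j+1}$ and $\mathbf{x}_j\mapsto\mathbf{x}_{j+1}$ whenever the $SP$-factor ends in $v'$, and $h=0$ otherwise, plus your corner rule $h(v^k)=v^{k-1}\otimes v$). With that amendment and with the four formulas for $\partial$ that the paper derives, namely $\partial(v^a\otimes\mathbf{t}_i)=v^av'\otimes\mathbf{t}_{i-1}$, $\partial(v^a\otimes\mathbf{x}_i)=v^{a+1}\otimes\mathbf{t}_{i-1}+v^av'\otimes\mathbf{x}_{i-1}$, $\partial(v^av'\otimes\mathbf{t}_i)=0$, $\partial(v^av'\otimes\mathbf{x}_i)=v^{a+1}v'\otimes\mathbf{t}_{i-1}$, the identity $\partial h+h\partial=\id$ does hold in all positive total degrees, leaving $H_0=\1$ as in the paper. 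Two further cautions: you never actually derive those formulas, and the derivation has a genuine subtlety (the braiding in $P_{2,3}$ creates an extra term, which vanishes only because $(v')^2=0$ in $SP$); and since $\Lambda^nP\cong 2\cdot\1$ for even $n$ while $P^{\otimes n}$ is projective, there is no $\Ver_4^+$-equivariant projection onto $\Lambda^{n}P$, so your insertion map cannot be rescued by appealing to any natural projection — the explicit correction term (or the paper's direct kernel/image computation) is really needed.
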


\begin{proof}    
Let $P$ have basis $x, x'$. Then
\begin{align*}
    \Lambda^2 P &= \text{span}(x' \otimes x', x \otimes x' + x' \otimes x), \\
    S P &= k[x, x'], (x')^2 = 0, \\
    \mathbb{S}^2 P &= \text{span}(x' \otimes x', x \otimes x' + x' \otimes x).
\end{align*}
Therefore, $\wedge P \cong S P$ and $\wedge^n P$ is spanned by $x^{\otimes n}$ and $x^{\otimes n - 1} \otimes x'$. Then $P^*$ has a dual basis $y', y$ with $y'(x) = 1$, $y(x') = 1$, and $dy = y'$. Moreover, $\wedge^n (P^*)$ is spanned by $y^{\otimes n}$ and $y^{\otimes n - 1} \otimes y'$. Thus $\Lambda^n P$ is also two-dimensional and spanned by
\begin{align*}
    \mathbf{t}_n &:= (y^{\otimes n})^*, \\
    \mathbf{x}_n &:= (y^{\otimes n - 1} \otimes y')^*.
\end{align*}
When $X = P$,
$\mu_{*}$ acts as
\begin{align*}
  x^{*} \otimes \mathbf{t}_i & \mapsto 0\\
  x^{*} \otimes \mathbf{x}_i & \mapsto \mathbf{t}_{i - 1}\\
  x^{\prime *} \otimes \mathbf{t}_i & \mapsto \mathbf{t}_{i - 1}\\
  x^{\prime *} \otimes \mathbf{x}_i & \mapsto \mathbf{x}_{i - 1}
\end{align*}

\begin{lemma}
  The differential $\partial : S^j P \otimes \Lambda^i P \rightarrow S^{j + 1} P \otimes
  \Lambda^{i - 1} P$ acts as
  \begin{equation*}
  x^j \otimes (\alpha\mathbf{t}_i + \beta\mathbf{x}_i) + x^{j - 1} x' \otimes
     (\gamma\mathbf{t}_i + \delta\mathbf{x}_i) \mapsto x^{j + 1} \otimes
     \beta\mathbf{t}_{i - 1} + x^j x' \otimes ((\alpha + \delta) \mathbf{t}_{i - 1} +
     \beta\mathbf{x}_{i - 1}) .
     \end{equation*}
\end{lemma}

\begin{proof}
  Let $f(x) \in S^j P$. For $X = P$, coevalution acts as $1 \mapsto 1 \otimes 1^{*} + d \otimes
  d^*$, so the composition $P_{2, 3} \circ \coev_P$ acts as
  \begin{align*}
  f (x) \otimes \Lambda^i P &\mapsto (1 \otimes 1^{*} + d \otimes
     d^*) \otimes f (x) \otimes \Lambda^i P\\
     &\mapsto (1 \otimes f (x)
     \otimes 1^{*} + d \otimes f (x) \otimes d^* + d \otimes d f (x)
     \otimes 1^{*}) \otimes \Lambda^i P.
     \end{align*}
  Applying $\mu$, we get
  \begin{equation*}
  (xf (x) \otimes 1^{*} + x'f (x) \otimes d^* + x' f(x)' \otimes
     1^{*}) \otimes \Lambda^i P
     \end{equation*}
  and the third summand is $0$ because $f(x)'$ will be a multiple of
  $x'$, so multiplication by $x'$ will kill it. Since $\partial : S^j X \otimes
  \Lambda^i X \rightarrow S^{j + 1} X \otimes \Lambda^{i - 1} X$, we may look only at homogeneous components. Applying $\mu_{*}$, we get that
  $\partial$ sends a (symmetric) degree $j$ element $x^j \otimes (\alpha\mathbf{t}_i + \beta\mathbf{x}_i) + x^{j -
  1} x' \otimes (\gamma\mathbf{t}_i + \delta\mathbf{x}_i)$ to
  \begin{equation*}
  x^{j + 1} \otimes \beta\mathbf{t}_{i - 1} + x^j x' \otimes ((\alpha + \delta)
     \mathbf{t}_{i - 1} + \beta\mathbf{x}_{i - 1}) .
     \end{equation*}
\end{proof}

Therefore, the image of $\partial$ (on $S^j P \otimes \Lambda^i P$) is the
  subspace where the $x^{j + 1} \otimes \mathbf{x}_{i - 1}$ coefficient is 0
  and the $x^{j + 1} \otimes \mathbf{t}_{i - 1}$ and $x^j x' \otimes
  \mathbf{x}_{i - 1}$ coefficients are equal. On the other hand, the kernel
  of $\partial : S^{j + 1} P \otimes \Lambda^{i + 1} P \rightarrow S^{j + 2} P
  \otimes \Lambda^i P$ is the subspace where the $x^{j + 1} \otimes
  \mathbf{x}_{i - 1}$ coefficient is 0 and $\alpha + \delta = 0$, i.e. $\alpha = \delta$, so the
  coefficients of $x^{j + 1} \otimes \mathbf{t}_{i - 1}$ and $x^j x'
  \otimes \mathbf{x}_{i - 1}$ must be equal. So this complex is exact in
  positive degrees.
  
  In degree 0, since $\mathbf{x}_0 = 0 , \mathbf{t}_0 = 1$, the
  differential $\partial : K^1 (P) \rightarrow K^0 (P)$ acts as
  \begin{equation*} x^j \otimes (\alpha\mathbf{t}_1 + \beta\mathbf{x}_1) + x^{j - 1} x' \otimes
     (\gamma\mathbf{t}_1 + \delta\mathbf{x}_1) \mapsto x^{j + 1} \otimes \beta + x^j x'
     \otimes (\alpha + \delta) . \end{equation*}
  Hence the image of this morphism is the space of polynomials
  with no constant term. So $H_0$ is the quotient of $S P$ by $\im \partial$, which is
  $\1$. Therefore, $P$ is Koszul.
\end{proof}

\begin{corollary}
    Since $\1$ is Koszul as well, all objects in $\Ver_4^+$ are Koszul.
\end{corollary}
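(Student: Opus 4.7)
The plan is to reduce everything to the two building blocks $\1$ and $P$: since every object of $\Ver_4^+$ is isomorphic to $m\cdot\1 + nP$, it suffices to verify Koszulity for $\1$ and to show that a direct sum of Koszul objects is Koszul, at which point Proposition \ref{p_koszul} completes the proof.

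First I would check that $\1$ is Koszul by direct inspection. Since $c_{\1,\1} = \id$, we have $\Lambda^2\1 = 0$, $\mathbb{S}^2 \1 = \1$, so $S\1$ is the polynomial algebra $k[x]$ on a single generator of degree $1$, while $\wedge \1 = k[x]/(x^2)$. Dualizing gives $\Lambda^0 \1 = \Lambda^1 \1 = \1$ and $\Lambda^n \1 = 0$ for $n\ge 2$, so the Koszul complex $K^\bullet(\1)$ collapses to the two-term complex $k[x] \xleftarrow{\,x\,} k[x]$, which is exact in positive degree with $H_0 = \1$. Thus $\1$ is Koszul.

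Next I would prove that the Koszul complex of a direct sum $X\oplus Y$ is naturally isomorphic to the tensor product of the Koszul complexes of $X$ and $Y$. For this one uses the canonical identifications $S(X\oplus Y) \cong SX \otimes SY$ and $\Lambda^n(X\oplus Y) \cong \bigoplus_{i+j=n} \Lambda^i X \otimes \Lambda^j Y$, together with the compatibility of the multiplication maps $\mu$ and the evaluation maps $\mu_*$ with these decompositions; the coevaluation for $X\oplus Y$ splits as the sum of the coevaluations for $X$ and $Y$, so the differential $\partial$ becomes the total differential on the tensor product complex $K^\bullet(X)\otimes K^\bullet(Y)$. Assuming $X$ and $Y$ are Koszul, both factors are resolutions of $\1$, and the tensor product of two such resolutions (which are bounded below and consist of flat objects, since everything in $\Ver_4^+$ is flat as the category is semisimple enough at the level of the relevant complexes) is again a resolution of $\1\otimes\1 = \1$. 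Hence $X\oplus Y$ is Koszul.

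Combining these observations with Proposition \ref{p_koszul}, any object $m\cdot\1 + nP$ is a finite direct sum of Koszul objects and is therefore Koszul, which proves the corollary. The only nontrivial step is the tensor-product decomposition of the Koszul complex of a direct sum; this is a standard fact but requires carefully tracking how $\coev$, $\mu$, and $\mu_*$ behave under the splittings of symmetric and exterior powers, which is where I would expect to spend the most care in a fully written-out proof.
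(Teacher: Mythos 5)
Your proposal is correct and fills in exactly the argument the paper leaves implicit: $\1$ is Koszul by direct inspection, the Koszul complex of a direct sum decomposes as the tensor product of the Koszul complexes of the summands, and hence every object $m\cdot\1+nP$ is Koszul by Proposition \ref{p_koszul}. One small fix: the K\"unneth step should be justified by the fact that tensoring with any object of a tensor category is exact (all objects are rigid, hence flat), not by any appeal to semisimplicity --- $\Ver_4^+$ is not semisimple, but exactness of $\otimes$ is all you need to conclude that the tensor product of the two resolutions of $\1$ is again a resolution of $\1$.
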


Hence, we obtain the following condition for being a Lie algebra, which was proved computationally in \cite{kaufer_superalgebra_2018}.
\begin{corollary}
    An operadic Lie algebra is a Lie algebra satisfying the PBW theorem if and only if $[x, x] = 0$ for all $x \in \ker d$.
\end{corollary}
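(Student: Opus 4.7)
The plan is to invoke the just-proven Koszulness of every object in $\Ver_4^+$ together with Theorem 4.11 of~\cite{etingof_koszul_2018}: for an operadic Lie algebra $L$ with Koszul underlying object, the PBW property is equivalent to $\beta^L$ vanishing on $E_2(L)$ and $E_3(L)$. It then remains to convert these two operadic conditions into the single hypothesis $[x,x]=0$ for $x\in\ker d$.

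For the forward direction, if $x\in\ker d$ then $c(x\otimes x) = x\otimes x + x'\otimes x' = x\otimes x$, so the defining relation $\id - c - \beta$ of $U(L)$ applied to $x\otimes x$ reads $[x,x]=0$ in $U(L)$; by PBW, $L\hookrightarrow U(L)$, hence $[x,x]=0$ in $L$.

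For the converse, I would first unpack operadic antisymmetry $\beta + \beta\circ c = 0$ in $\Ver_4^+$ to the pointwise identity $[x,y] + [y,x] + [y',x'] = 0$; setting $y=x$ gives $[x',x']=0$ for every $x\in L$. Thus the hypothesis is nontrivial only on invariants not in $\im d$, equivalently on those supported by $\1$-summands of $L$. Next I would compute $E_2(V)=\mathbb{S}^2 V/\Lambda^2 V$ for $V=m\cdot\1+nP$: the $\1$-diagonals each contribute a copy of $\1$; the $P$-diagonals contribute $0$ since $\mathbb{S}^2 P=\Lambda^2 P$ as computed in Proposition~\ref{p_koszul}; and cross terms $V_i\otimes V_j + V_j\otimes V_i$ with $i\ne j$ vanish because in characteristic $2$ we have $(c-1)^2=0$, which on that piece forces $\ker(c-1)=\im(c-1)$. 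Hence $E_2(V)=m\cdot\1$, and vanishing of $\beta^L$ on it is precisely the stated hypothesis.

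The main obstacle is to check that $\beta^L$ also vanishes on $E_3(L)$. Here I would decompose $E_3(V)$ into contributions from the $\1$- and $P$-summands using that $\mathbf{Lie}_3$ is $2$-dimensional with $S_3$ acting through the braiding, and then verify vanishing using the Jacobi identity (automatic for an operadic Lie algebra), the derived relation $[x',x']=0$, and the $E_2$ condition just secured. In lieu of the direct computation one can invoke the explicit calculation of~\cite{kaufer_superalgebra_2018}, which establishes exactly this equivalence on degree-$3$ brackets.
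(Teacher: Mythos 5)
Your overall scaffolding matches the paper's proof: invoke Koszulness of all objects of $\Ver_4^+$ together with Theorem 4.11 of \cite{etingof_koszul_2018} to reduce PBW to the vanishing of $\beta^L$ on $E_2(L)$ and $E_3(L)$, compute $E_2(m\cdot\1+nP)=m\cdot\1$ (your direct computation via $\ker(c-1)/\im(c-1)$ on the diagonal and cross summands is correct and is equivalent to the paper's remark that $E_2(L)$ is the Frobenius twist of $L$, which kills the $P$-summands), and use skew-symmetry $[x,x]+[x,x]+[x',x']=0$ to dispose of $\im d$. The forward direction via the relation $(\id-c-\beta)(x\otimes x)$ in $U(L)$ and injectivity of $L\to U(L)$ under PBW is also fine.

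The genuine gap is the $E_3$ step, which you leave as a plan rather than a proof. The fact you are missing is that $E_3(L)=0$ when $\operatorname{char}k=2$: the degree-$3$ obstruction in \cite{etingof_koszul_2018} is a characteristic-$3$ phenomenon (it encodes the extra axiom $[[x,x],x]=0$), so in $\Ver_4^+$ the condition on $E_3$ is vacuous and nothing needs to be verified there. This matters because your proposed mechanism — deducing vanishing of $\beta^L$ on $E_3(L)$ from the Jacobi identity together with the $E_2$ condition — is not a valid general strategy: in characteristic $3$, where $E_3$ is genuinely nonzero, the condition on $E_3$ is an independent axiom not implied by Jacobi, so an argument of that shape cannot be expected to close the gap and you give no computation showing it does here. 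The fallback of citing \cite{kaufer_superalgebra_2018} is also problematic: that reference proves the full equivalence by direct computation (it is the very result this corollary re-proves conceptually), it is not organized as a statement about $\beta^L$ on $E_3$, and leaning on it both leaves your argument incomplete as written and forfeits the point of the Koszul-theoretic route. To repair the proof, replace the last paragraph with the observation (from \cite{etingof_koszul_2018}) that $E_3$ vanishes outside characteristic $3$, hence $E_3(L)=0$ here.
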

\begin{proof}
By Theorem 4.11 in \cite{etingof_koszul_2018}, an operadic Lie algebra whose underlying object is Koszul is PBW iff it is a Lie algebra iff $\beta^L$ vanishes on $E_2(L)$ and $E_3(L)$. Since $\operatorname{char} k = 2$, $E_3(L) = 0$. Meanwhile, $E_2(L)$ is the first Frobenius twist of $L$; thus if $L = m \cdot \1 + nP$, $E_2(m \cdot \1 + nP) = m \cdot \1$, as the Frobenius twist of $P$ is $0$. Hence, $\beta$ vanishes on $E_2(L)$ exactly when $[x, x] = 0$ for $x \in m \cdot \1 \subset L$.

Note that for $x \in nP$, the skew-symmetry condition is $[x, x] + [x, x] + [x', x'] = 0$, so for $y \in \im d \subset nP$, $[y, y] = 0$. Therefore, $m \cdot \1 + nP$ is a Lie algebra exactly when $[x, x] = 0$ for $x \in \ker d$.
\end{proof}
    
\section{Determination of Lie algebras of low dimensionalities}

In this section, we provide a general framework for classifying Lie algebra structures in $\Ver_4^+$, then classify Lie algebras in $\Ver_4^+$ with underlying objects $P$, $\1 + P$, $2 \cdot \1 + P$, and $2P$ up to isomorphism.

We first deduce some basic properties of Lie algebras in $\Ver_4^+$. Recall that we require the bracket to satisfy the following:
\begin{itemize}
    \item $[x, y] + [y, x] + [y', x'] = 0$ (skew-symmetry);
    \item $[[x, y], z] + [[z, x], y] + [[y, z], x] + [[x', y], z'] + [[z', x], y'] + [[y', z], x'] = 0$ (Jacobi identity);
    \item $[x, x] = 0$ if $x' = 0$ (so that the operadic Lie algebra is a Lie algebra).
\end{itemize}

In particular, this implies that $[x', x'] = 0$ and that $[x, y] = [y, x]$ for $y \in \ker d$. Therefore, a Lie algebra with underlying object $m \cdot \1$ is a classical $m$-dimensional Lie algebra.

Because $d$ is primitive, $d$ is a derivation: we have 
\begin{equation*}
    d([x, y]) = [x', y] + [x, y'].
\end{equation*}
In particular, $[x, x], [x, x'] \in \ker d$.

\begin{definition}
    We say a Lie algebra $\mfrak{g}$ in $\Ver_4^+$ is alternating if $[x, x] = 0$ for all $x \in \mfrak{g}$. We say $\mfrak{g}$ is skew-symmetric if $[x, y] = [y, x]$ for all $x, y \in \mfrak{g}$, i.e. $[x', y'] = 0$ and $\im d$ is an abelian Lie algebra.
\end{definition}
\begin{remark}
    Every alternating Lie algebra is skew-symmetric, since
    $$[x, y] + [y, x] = [x + y, x + y] - [x, x] - [y, y].$$
\end{remark}

\begin{proposition}
    Every Lie algebra structure on $n \cdot \1 + P$ is skew-symmetric. 
\end{proposition}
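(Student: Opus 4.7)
The plan is to reduce the claim to showing that $[y', z'] = 0$ for all $y, z \in \mfrak{g} = n \cdot \1 + P$. Once this is established, the skew-symmetry axiom $[y, z] + [z, y] + [z', y'] = 0$ becomes $[y, z] + [z, y] = 0$, so $[y, z] = [z, y]$ in characteristic $2$, which is precisely the definition of skew-symmetric given above.

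To prove $[y', z'] = 0$, the first step I would take is to analyze $\im d \subset \mfrak{g}$. The derivation $d$ annihilates the $n \cdot \1$-summand entirely, and on $P = k[d]/d^2$ its image is the one-dimensional span of $d$ itself. Hence $\im d$ is a single line in $\mfrak{g}$. I would fix a generator $x$ of the $P$-summand so that $\im d = k \cdot x'$.

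The second step is to specialize the skew-symmetry axiom $[u, v] + [v, u] + [v', u'] = 0$ to the case $u = v = x$, which yields $2[x, x] + [x', x'] = 0$, so $[x', x'] = 0$ in characteristic $2$. Since the bracket is a $k$-bilinear map on the underlying vector space and every $y'$, $z'$ lies in the one-dimensional subspace $k \cdot x'$, we may write $y' = \lambda x'$ and $z' = \mu x'$ and conclude $[y', z'] = \lambda \mu [x', x'] = 0$.

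I do not foresee any real obstacle: the entire argument rests on the observation that $\mfrak{g}$ contains just one copy of $P$, so that $\im d$ is one-dimensional and the vanishing of $[x', x']$ from the skew-symmetry axiom automatically propagates to all of $\im d \otimes \im d$. The claim would fail once two or more copies of $P$ are allowed, since then $\im d$ is at least two-dimensional and cross-terms $[x'_1, x'_2]$ between distinct generators need not vanish.
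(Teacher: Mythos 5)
Your proposal is correct and follows essentially the same route as the paper, which simply observes that $\im d$ is one-dimensional and hence an abelian (trivial) Lie algebra; your argument just fills in the detail that $[x',x']=0$ follows from the skew-symmetry axiom at $u=v=x$ and then propagates by bilinearity, after which the axiom reduces to $[y,z]=[z,y]$.
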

\begin{proof}
    Since $\im d$ is one-dimensional, it must be the trivial Lie algebra.
\end{proof}

\subsection{Classification of ordinary Lie algebra structures of dimension up to 4}

Here we review the well-known classification of ordinary Lie algebras of dimension at most 3, which will be used in the later subsections. We also state the classification in the dimension 4 case to cover all the 4-dimensional cases in $\Ver_4^+$.

The only 1-dimensional Lie algebra is $\1$ with the trivial bracket.

The following proposition is a well-known classical result for 2-dimensional Lie algebras:
\begin{proposition}\label{p_lie_algebras}
    If the underlying object of $L$ is $2 \cdot \1$ with basis $x, y$, then $L$ is isomorphic to one of the following:
    \begin{itemize}
        \item an abelian Lie algebra
        \item $[x,y] = x$, which we denote by $L_2$
    \end{itemize}
\end{proposition}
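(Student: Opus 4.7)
The plan is to observe that since the underlying object is $2 \cdot \1$, the derivation $d$ vanishes on all of $L$, so every element satisfies $x' = 0$. Thus the three defining axioms of a Lie algebra in $\Ver_4^+$ collapse to the classical ones: $[x,y] = [y,x]$ (equivalently $[x,y] + [y,x] = 0$ since $\operatorname{char} k = 2$), the ordinary Jacobi identity, and $[v,v] = 0$ for all $v$. So this reduces to the well-known classification of $2$-dimensional classical Lie algebras, which I will sketch.

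First I would note that the Jacobi identity is automatic in dimension $2$: the expression $[[a,b],c] + [[c,a],b] + [[b,c],a]$ vanishes whenever two of the arguments agree (using $[v,v]=0$ and skew-symmetry), and any triple from a two-element basis repeats some entry. So it suffices to classify alternating brackets up to change of basis. Writing $[x,y] = \alpha x + \beta y$ for scalars $\alpha,\beta \in k$, if $\alpha = \beta = 0$ then $L$ is abelian. Otherwise, set $z := [x,y]$ and extend $z$ to a basis $\{z,w\}$ of $L$. The derived subalgebra $[L,L]$ lies in $kz$, so $[z,w] = \gamma z$ for some $\gamma \in k$; and $\gamma \neq 0$, since otherwise the bracket would vanish on the new basis (using $[z,z]=[w,w]=0$), contradicting $z \neq 0$. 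Rescaling $w$ by $\gamma^{-1}$ yields a basis realizing $L_2$.

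To conclude, I would check that the two cases are non-isomorphic by observing that the derived subalgebra distinguishes them: $[L,L] = 0$ in the abelian case and $[L,L] = kx$ in $L_2$. There is no real obstacle here --- the result is entirely elementary linear algebra --- the only subtlety is confirming at the outset that the $\Ver_4^+$-axioms genuinely reduce to the classical ones on $2\cdot\1$, which is immediate from $d = 0$.
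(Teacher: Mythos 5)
Your proof is correct. The paper states this proposition as a well-known classical result and gives no proof; your reduction to the classical setting via $d=0$ on $2\cdot\1$ matches the paper's earlier remark that a Lie algebra with underlying object $m\cdot\1$ is just an ordinary Lie algebra, and the classification argument you supply (Jacobi automatic in dimension $2$, derived subalgebra either zero or spanned by a nonzero $z$ with $[z,w]=\gamma z$, $\gamma\neq 0$, rescale) is the standard one and is sound.
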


The classification of Lie algebras with underlying object $3 \cdot \1$ is described in e.g. \cite{jacobson_lie_1979} Section 1.4:
\begin{proposition}
    Let $L$ be a Lie algebra with underlying object $3 \cdot \1$ with basis $x, y, z$. If $[L, L] \ne L$, $L$ is isomorphic to one of
    \begin{itemize}
        \item abelian 3-dimensional Lie algebra
        \item $[x, y] = z$, other brackets are 0
        \item $[x, y] = x$, other brackets are 0
        \item $[x, y] = 0$, $[x, z] = x$, $[y, z] = \lambda y$, $\lambda \ne 0$
        \item $[x, y] = 0$, $[x, z] = x + y$, $[y, z] = y$
    \end{itemize}
    If $[L, L] = L$, then isomorphism classes of $L$ correspond to equivalence classes of 3 by 3 symmetric matrices $A$ where $A \sim B$ if $B = \rho N^T A N$ where $\rho \ne 0$ and $N$ is invertible.
\end{proposition}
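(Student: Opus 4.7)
The plan is to split into two main cases depending on whether $[L,L] = L$, and in each case reduce the question to linear algebra: a case analysis on $d := \dim[L,L]$ in the non-perfect case, and a reformulation in terms of symmetric bilinear forms via a volume form on $L$ in the perfect case. The result is classical; I sketch how I would organize the argument.

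For $[L,L] \ne L$, I would stratify by $d \in \{0, 1, 2\}$. If $d = 0$, $L$ is abelian. If $d = 1$, let $w$ span $[L,L]$ and split on whether $w$ is central: when $w \in Z(L)$, choosing $x, y$ with $[x,y] = w$ and setting $z = w$ recovers the Heisenberg algebra $[x,y] = z$; when $w \notin Z(L)$, some $v$ satisfies $[v,w] = \lambda w$ with $\lambda \ne 0$, and after rescaling one obtains $[x,y] = x$ with a central $z$ completing the basis. If $d = 2$, the derived subalgebra $I := [L,L]$ is a two-dimensional ideal; a short argument rules out $I \cong L_2$ (any derivation of $L_2$ has image in the line $[L_2,L_2]$, which would force $\dim[L,L] \le 1$), so $I$ is abelian and $\ad_v|_I$ is forced to be invertible for any $v \in L \setminus I$. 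Jordan normal form over the algebraically closed $k$, together with rescalings of $v$ and change of basis in $I$, then gives the diagonalizable family (case~4, parameterized by $\lambda \ne 0$) and the single Jordan block family (case~5).

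For $[L,L] = L$, the bracket defines a surjective, hence bijective, linear map $\beta: \Lambda^2 L \to L$. Fixing a nonzero $\omega \in (\det L)^*$, I would use the isomorphism $\Lambda^2 L \cong L^*$ given by $u \wedge v \mapsto \omega(u \wedge v \wedge \cdot)$ to convert $\beta$ into an isomorphism $L^* \to L$, equivalently a nondegenerate bilinear form on $L^*$ represented by an invertible matrix $A$. A direct computation of the Jacobiator on the single nontrivial triple $(e_1, e_2, e_3)$ expresses $[[e_i,e_j],e_k] + \text{cyc.}$ as a scalar multiple of $(A - A^T)$ applied to the basis, so Jacobi holds precisely when $A$ is symmetric. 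On the equivalence side, base change in $L$ by $N \in \GL_3(k)$ sends $A \mapsto N^T A N$ up to a determinantal factor from the induced action on $\det L$, while rescaling $\omega$ by $\rho \in k^\times$ rescales $A$ by $\rho$; combining these yields exactly the stated relation $A \sim \rho N^T A N$.

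The main obstacle will be the bookkeeping in the non-perfect case, particularly checking that the three available normalizations (rescaling $v$, shifting $v$ by an element of $I$, and conjugating in $I$) really exhaust the ambiguity in the canonical forms so that the listed cases are both exhaustive and pairwise non-isomorphic. The perfect case is conceptually clean once the $\Lambda^2 L \cong L^*$ identification is in place, but one must carefully track the scalar factor from the interaction of base change in $L$ with the induced base change on the volume form in order to land precisely on the equivalence $A \sim \rho N^T A N$ rather than a more restrictive one.
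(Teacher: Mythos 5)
Your sketch is correct in outline, but note that the paper does not actually prove this proposition: it is quoted as a classical fact with a citation to Jacobson, Section 1.4, and your argument is essentially a reconstruction of that classical proof (stratify by $\dim[L,L]$ in the non-perfect case; in the perfect case encode the bracket as a matrix $A$ via $\Lambda^2 L \cong L^*$ after fixing a volume form, show Jacobi forces $A$ symmetric, and identify the base-change/volume-rescaling action as $A \mapsto \rho N^T A N$). Nothing in the characteristic-$2$ setting disturbs this, since the paper's definition forces $[x,x]=0$ on objects of the form $m\cdot\1$, so the brackets are genuinely alternating and the usual computations (e.g.\ killing the off-ideal brackets in the $\dim[L,L]=1$, non-central case by adjusting the complementary vector, and the derivation argument ruling out $[L,L]\cong L_2$ when $\dim[L,L]=2$) go through verbatim. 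One small imprecision worth fixing: the Jacobiator on $(e_1,e_2,e_3)$ is not literally ``$(A-A^T)$ applied to the basis''; writing $[e_2,e_3],[e_3,e_1],[e_1,e_2]$ as the rows $r_1,r_2,r_3$ of $A$, it equals $(a_{32}-a_{23})r_1+(a_{13}-a_{31})r_2+(a_{21}-a_{12})r_3$, so ``Jacobi iff $A=A^T$'' requires the rows to be independent, i.e.\ $A$ invertible; this is exactly what $[L,L]=L$ gives you (and is why non-symmetric $A$ can still define Lie algebras in the non-perfect case), so your conclusion stands, but the invertibility should be invoked explicitly at that step. Also note that base change alone yields $A\sim\det(N)\,N^TAN$; over algebraically closed $k$ this already generates the stated equivalence, but your device of additionally rescaling the volume form is the cleaner way to land exactly on $A\sim\rho N^TAN$.
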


The classification of 4-dimensional Lie algebras is described in \cite{strade_lie_2006} in the non-solvable case and \cite{graaf_solvable_2005} in the solvable case. We will not reproduce it here because it its not used elsewhere in this paper.

\subsection{Classification of Lie algebra structures on $P$}\label{lie_p_classification}

\begin{proposition}
    If the underlying object of $L$ is $P$ with basis $x, x'$, then $L$ is isomorphic to one of
    \begin{itemize}
        \item an abelian Lie algebra, which we denote by $P_a$;
        \item the Lie algebra with $[x, x] = 0, [x', x] = x'$, which we denote by $P_s$;
        \item the Lie algebra with $[x, x] = x', [x', x] = 0$, which we denote by $P_n$;
    \end{itemize}
    and these are pairwise non-isomorphic.
\end{proposition}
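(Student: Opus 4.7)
The plan is to parametrize all candidate brackets on $P$, cut the space down by the Lie-algebra axioms, and then quotient by the automorphism group of $P$ in $\Ver_4^+$. Fix the basis $x, x'$ of $P$ and write the fully general $k$-bilinear assignment
\begin{align*}
[x,x] &= ax + bx', & [x,x'] &= cx + ex', & [x',x] &= fx + gx', & [x',x'] &= hx + ix'.
\end{align*}
Requiring $\beta$ to commute with $d$ (equivalently, $d$ is a derivation of the bracket) on each basis element of $P \otimes P$ forces $c = f = i$, $h = 0$, and $a = e + g$. Imposing skew-symmetry $\beta + \beta \circ c_{P,P} = 0$ on $(x,x)$ gives $[x',x'] = 0$, hence $c = f = i = 0$, and on $(x, x')$ gives $e = g$, whence $a = 2e = 0$. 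The Lie-algebra condition $[v,v] = 0$ for $v \in \ker d = \langle x'\rangle$ is now automatic. So every candidate bracket has the form $[x,x] = bx'$ and $[x,x'] = [x',x] = ex'$ for some $(b, e) \in k^2$.

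Next I would check the Jacobi identity on basis triples. The triples $(x,x,x')$, $(x,x',x')$, and $(x',x',x')$ all collapse to $0$: the primed entries kill most terms through $x'' = 0$ and the remaining terms pair up and vanish in characteristic $2$. Only $(x,x,x)$ produces a nontrivial equation, namely $be = 0$, so the parameter space of Lie algebra brackets is the union of the two coordinate axes in the $(b,e)$ plane. An arbitrary automorphism of $P$ in $\Ver_4^+$ has the form $\phi(x) = \lambda x + \mu x'$ with $\lambda \in k^{\times}$ and $\mu \in k$, and a short computation shows that the induced action on parameters is $(b, e) \mapsto (\lambda^{-1} b, \lambda^{-1} e)$. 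Each of the two nonzero axes is therefore a single orbit, normalized to $(1, 0)$ and $(0, 1)$; together with the origin these yield precisely $P_n$, $P_s$, and $P_a$.

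Finally, for pairwise non-isomorphism: $P_a$ is abelian while $P_n$ and $P_s$ are not. To separate the latter two I would use that $P_s$ is alternating — for $v = \alpha x + \beta x'$ one has $[v, v] = \alpha^2 [x,x] + 2\alpha\beta[x, x'] + \beta^2 [x', x'] = 0$ — whereas $P_n$ is not, since $[x, x] = x' \ne 0$; being alternating is preserved by isomorphism. The main obstacle is really just careful bookkeeping in the first step: the characteristic $2$ setting makes many $2c$-type cancellations invisible, so one has to disentangle the constraints coming from $d$-equivariance, skew-symmetry, and the $[v, v] = 0$ condition on $\ker d$, and verify they combine as expected.
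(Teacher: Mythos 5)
Your proof is correct and follows essentially the same route as the paper: reduce the bracket using $d$-equivariance and skew-symmetry so that $[x,x]=bx'$, $[x,x']=[x',x]=ex'$, extract $be=0$ from the Jacobi identity, and normalize by rescaling under $\Aut(P)$. The only cosmetic difference is that you distinguish $P_s$ from $P_n$ via the alternating property (which the paper uses in the remark immediately after), whereas the paper's proof cites semisimplicity versus nilpotency of $\ad x$.
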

\begin{proof}
    We know that $[x, x], [x, x'] \in \ker d$; thus they are both multiples of $x'$. The Jacobi identity tells us that
    \begin{itemize}
        \item $[[x, x], x] + [[x', x], x'] = 0$
        \item $[[x, x], x'] = 0$
    \end{itemize}
    but the second is vacuous and the first reduces to $[[x, x], x] = 0$.

    If $L$ is not abelian, then we have two cases:
    \begin{itemize}
        \item If $[x, x] = 0$, then $[x, x'] = \lambda x' \ne 0$. By scaling $x$ by $\lambda^{-1}$, we get an isomorphism to the second case in the proposition.
        \item If $[x, x] = \lambda x' \ne 0$, then $[[x, x], x] = 0$ implies $\lambda[x, x'] = 0$. Hence $[x, x'] = 0$. Again, by scaling $x$ by $\lambda^{-1}$, we get an isomorphism to the third case.
    \end{itemize}
    These are not isomorphic to each other: in $P_s$, $\ad x$ is semisimple, but in $P_n$, $\ad x$ is nilpotent, which motivates the notation. Moreover, any isomorphism in $\GL(P)(k)$ will scale $[x, x]$ and $[x', x]$, so both $P_s$ and $P_n$ degenerate to the abelian case.
\end{proof}

\begin{remark}
    We see that $P_s$ does not degenerate to $P_n$ because $P_s$ is alternating while $P_n$ is not. Neither does $P_n$ degenerate to $P_s$, since in $P_n$, $\ad y$ is nilpotent for all $y \in P_n$, while in $P_s$ this is not the case.
\end{remark}

\subsection{Classification of Lie algebra structures on $m \cdot \1 + nP$} \label{gl_mnp_framework}

In this section, we describe a general framework for classifying Lie algebras with underlying object $m \cdot \1 + nP$. Let $\mfrak{g} := m \cdot \1 + nP$. Then $\ker d \subset \mfrak{g}$ is an ordinary $m + n$-dimensional Lie algebra $L$ (though the following analysis works for any $\ker d \supset L \supset \im d$) and $\mfrak{g}/L$ has the structure of an $L$-module. Choose a splitting $\mfrak{g} = V \oplus L$ as vector spaces; this induces an $L$-module structure on $V$, which we denote by $V_f$. Then $d: V \to L$ is an injective $L$-module homomorphism with image $\im d$. In particular, if $\mfrak{g} = nP$, then $d$ is an isomorphism and $V$ is isomorphic to the adjoint representation. Otherwise, $V$ is a subrepresentation of the adjoint representation. 

For the rest of this section, elements of $L$ are denoted by $z, w$ and elements of $V$ are denoted by $v, x, y$. Write
$$[x, w] = f(w)x + A(w)x$$
where $f: L \to \End(V)$ and $A: L \to \Hom(V, L)$. Then $f$ is the $L$-module structure on $V$.
Write
$$[x, y] = B(x \otimes y) + C(x \otimes y)$$
where $B: V \otimes V \to V$ and $C: V \otimes V \to L$. Skew-symmetry implies that 
\begin{align*}
    B(x \otimes y) = B(y \otimes x), B(x \otimes x) = 0 \\
    C(x \otimes y) + C(y \otimes x) + [y', x'] = 0.
\end{align*}

The Jacobi identity on $x, z, w$ for $z, w \in L$ and $x \in V$ implies that $f$ is a representation of $L$ on $V$, as stated above, and that 
\begin{equation}\label{a_derivation}[A(z)x, w] + A(w)f(z)x + [A(w)x, z] + A(z)f(w)x + A([z, w])x = 0,\end{equation} so $A$ is a derivation $L \to \Hom_{\Vect}(V_f, L)$. 

Any automorphism $\varphi: V \to V$ leaves $f$ unchanged and takes $A(w) \mapsto A(w) \circ \varphi$, $B \mapsto \varphi^{-1} B (\varphi \otimes \varphi)$, and $C \mapsto C(\varphi \otimes \varphi)$.

We can reduce choosing a different splitting of $\mfrak{g}$ to translating $V$ by a linear map $\varphi: V \to L$ because we have covered automorphisms of $V$ above. Such a translation leaves $f$ unchanged, and we have
\begin{align*}
    A(w, (1+\varphi)x) &= A(w, x) + [w, \varphi(x)] + \varphi(f(w)x) \\
    B((1 +\varphi)x, (1+\varphi)y)&= B(x \otimes y) + f(\varphi(y))x + f(\varphi(x))y \\
    C((1+\varphi)x, (1+\varphi)y) &= C(x \otimes y) + A(\varphi(y))x + A(\varphi(x))y + \varphi(B(x \otimes y)).
\end{align*}
In particular, if $D$ is the differential in the Chevalley-Eilenberg complex, $A$ is shifted by $D\varphi: L \to \Hom(V_f, L)$. Therefore, $A$ corresponds to a well-defined class in $H^1(L, \Hom(V_f, L))$ up to automorphisms of $V_f$.

Now we determine how properties of the bracket correspond to relations that $f, A, B, C$ must satisfy.
For $x, y \in V$, we have $[x, y]' = [x', y] + [x, y'] \in \ker d$, i.e. $$f(x')y + A(y)x' + f(y')x + A(x)y' \in \ker d.$$ Therefore, $$f(x')y = f(y')x, B(x \otimes y)' = A(x')y + A(y')x.$$ Note that the former is implied by $f$ being a subrepresentation of the adjoint representation.

For $x \in V$, $z \in L$, $$[x', z] = (f(z)x)'.$$ For $x \in V$, $z \in L$, the Jacobi identity on $x, x, z$ simplifies to $[[x, x], z] + [[x', z], x'] = 0$. Therefore, $$[C(x \otimes x), z] = (f(x')f(z)x)'.$$ For $x, y \in V$, the Jacobi identity simplifies to $[[x, x], y] + [[x', y], x'] = 0$ when $x, y \in V$. Hence, $$f(C(x \otimes x))y + A(C(x \otimes x))y + f(x')f(x')y + A(x')f(x')y + [A(x')y, x']= 0,$$ so
\begin{align*}
    f(C(x \otimes x))y &= f(x')f(x')y \\
    A(C(x \otimes x))y &= A(x')f(x')y + [x', A(x')].
\end{align*}

For $x, y \in V$, $z \in L$, the Jacobi identity simplifies to $$[[x, y], z] + [[z, x], y] + [[y, z], x] + [[y', z], x'] = 0.$$
Therefore,
\begin{align*}
    &f(z)B(x \otimes y) + A(z)B(x \otimes y)\\
    + &B(f(z)x \otimes y) +  C(f(z)x \otimes y) + f(A(z)x)y + A(A(z)x)y \\
    + &B(f(z)y \otimes x) + C(f(z)y \otimes x) + f(A(z)y)x + A(A(z)y)x \\
    + &[[y', z], x'] + [C(x \otimes y), z] = 0.
\end{align*}
Gathering terms in $L$, we need
\begin{gather*}
    f(z)B(x \otimes y) +B(f(z)x \otimes y) + B(f(z)y \otimes x) + f(A(z)x)y + f(A(z)y)x = 0 \\
    \implies [B, z](x \otimes y) = f(A(z)x)y + f(A(z)y)x,
\end{gather*}
and likewise gathering terms in $V$,
$$[C, z](x \otimes y) = A(z)B(x \otimes y) + A(A(z)x)y + A(A(z)y)x.$$

Finally, the Jacobi identity for $v, x, y \in V$ states that
\begin{align*}
    &B(B(x \otimes y) \otimes v) + f(C(x \otimes y))v + f(v')f(x')y \\
    + &B(B(v \otimes x) \otimes y) + f(C(v \otimes x))y + f(y')f(v')x \\
    + &B(B(y \otimes v) \otimes x) + f(C(y \otimes v))x + f(x')f(y')v = 0
\end{align*}
and
\begin{align*}
    &C(B(x \otimes y) \otimes v) + A(C(x \otimes y))v + [A(x')y, v'] + A(v')f(x')y \\
    + &C(B(v \otimes x) \otimes y) + A(C(v \otimes x))y + [A(v')x, y'] + A(y')f(v')x \\
    + &C(B(y \otimes v) \otimes x) + A(C(y \otimes v))x + [A(y')v, x'] + A(x')f(y')v.
\end{align*}

\begin{theorem}\label{lie_m_np_classification}
    Isomorphism classes of Lie algebra structures on $m \cdot \1 + nP$, correspond to tuples $(L, f, A, B, C)$, where
    \begin{enumerate}
        \item $L$ is an ordinary $m + n$-dimensional Lie algebra;
        \item $f$ is an $n$-dimensional subrepresentation of the adjoint representation of $L$, denoted $V_f$; let $d: V_f \subset L$ denote this inclusion and write $d(v) = v'$ for convenience;
        \item $A$ is a derivation $L \to \Hom_{\Vect}(V_f, L)$ whose class in $H^1(L, \Hom_{\Vect}(V_f, L))$ is well-defined;
        \item $B: V \otimes V \to V$, is a symmetric map with $B(x \otimes x) = 0$, up to translation by $f(\id \otimes \varphi)(1 + \sigma)$, where $\varphi: V \to L$ is any linear map (corresponding to choosing a different splitting of $\mfrak{g}$) and $\sigma$ is swapping factors;
        \item $C: V \otimes V \to L$, is a map such that $C(x \otimes y) = C(y \otimes x) + [y', x']$ up to translation by $A(\id \otimes \varphi)(1 + \sigma) + \varphi \circ B$, with $\varphi, \sigma$ as above,
    \end{enumerate}
    satisfying 
    \begin{align*}
        % f(x')y &= f(y')x, x, y \in V \\
        B(x \otimes y)' &= A(x')y + A(y')x, x, y \in V \\
        [C(x \otimes x), z] &= (f(x')f(z)x)', x \in V, z \in L \\
        f(C(x \otimes x)) &= f(x')f(x'), x \in V \\
        A(C(x \otimes x)) &= A(x')f(x') + (\ad x')A(x'), x \in V \\
        [B, z](x \otimes y) &= f(A(z)x)y + f(A(z)y)x, x,y \in V, z \in L \\
        [C, z](x \otimes y) &= A(z)B(x \otimes y) + A(A(z)x)y + A(A(z)y)x, x, y \in V, z \in L \\
        0 &= (B(B \otimes \id) + f(C \otimes \id) + (f \circ d)((f \circ d) \otimes \id))\sigma_3 \\
        0 &= C(B \otimes \id) + A(C \otimes \id) + (\ad \circ d)((A \circ d) \otimes \id) + df(A(d \otimes \id) \otimes \id))\sigma_3
    \end{align*}
    where $\sigma_3 = \on{Id} + (123) + (132)$ and the last two relations are on $V \otimes V \otimes V$.
    An automorphism $\varphi: V \to V$ takes $$(f, A, B, C) \mapsto (f, A(\cdot)\varphi, \varphi^{-1}B(\varphi \otimes \varphi), C(\varphi \otimes \varphi)).$$
\end{theorem}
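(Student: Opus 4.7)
The plan is to prove the correspondence in both directions: any Lie algebra structure on $\mfrak{g} = m\cdot\1 + nP$ produces such a tuple, and any tuple satisfying the listed relations defines a Lie algebra, with isomorphism classes matching equivalence classes of tuples.

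For the forward direction, I would set $L := \ker d \subset \mfrak{g}$, which is an ordinary $(m+n)$-dimensional Lie algebra by the preliminary observations of this section. Choosing a vector-space splitting $\mfrak{g} = V \oplus L$ picks out a copy of $V \cong (m+n)\cdot\1$ (as a plain vector space) on which $d$ acts injectively with image $\im d \subset L$; the composition identifies $V$ with a subrepresentation $V_f$ of the adjoint representation, which equals the full adjoint representation precisely when $m = 0$. Decomposing the bracket along the splitting $\mfrak{g} = V \oplus L$ gives the components $f, A, B, C$ as defined in the exposition, and the bracket on $L$ is $[L, L]$ together with $[V, L]$ having an $L$-component $A$. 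The skew-symmetry, derivation property of $d$, and the Jacobi identity then unpack termwise into exactly the relations listed; this was carried out in full detail in the paragraphs preceding the theorem, with separate cases according to how many arguments lie in $V$ versus $L$.

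For the reverse direction, given a tuple $(L, f, A, B, C)$ satisfying the listed equations, I would define $\mfrak{g} := V_f \oplus L$ with $d = $ inclusion on $V_f$ and $0$ on $L$, and define the bracket by the formulas $[x, w] = f(w)x + A(w)x$ and $[x, y] = B(x \otimes y) + C(x \otimes y)$ together with the given bracket on $L$. Skew-symmetry follows from $B(x \otimes x) = 0$, the constraint $C(x\otimes y) + C(y\otimes x) + [y', x'] = 0$, and the fact that $L$ is an ordinary Lie algebra so $[z, z] = 0$. Jacobi splits into four cases by the number of $V$-arguments (three-from-$L$ is automatic; two-from-$L$, one-from-$V$ gives the derivation condition on $A$ plus $f$ being a representation; one-from-$L$, two-from-$V$ gives the $[B, z]$ and $[C, z]$ relations and the alternating condition $[C(x\otimes x), z]$ equation; three-from-$V$ gives the final two $\sigma_3$-relations). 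Each case matches exactly one line in the list of relations.

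Finally, for the equivalence classes, two tuples yield isomorphic Lie algebras iff they are related by the combined action of $\on{Aut}(V) = \GL(V)(k)$ acting as stated, and by changing the splitting via a linear map $\varphi: V \to L$, which produces the translations shown in the exposition; in particular $A$ shifts by $D\varphi$ in the Chevalley-Eilenberg differential, so its cohomology class in $H^1(L, \Hom(V_f, L))$ is well-defined. The main obstacle is the bookkeeping: one must verify that distributing skew-symmetry and Jacobi across $V \oplus L$ yields exactly the listed relations and no others, without double-counting symmetric pairs. Most of this bookkeeping is carried out in the preceding exposition, so the proof of the theorem reduces to organizing those computations and checking that the reverse implication uses each relation exactly once.
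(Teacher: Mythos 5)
Your proposal is correct and follows essentially the same route as the paper: the theorem there is a summary of the computations in the preceding subsection (set $L = \ker d$, choose a splitting $\mfrak{g} = V \oplus L$, decompose the bracket into $f, A, B, C$, and unpack skew-symmetry, compatibility with $d$, and the Jacobi identity into the listed relations, together with the change-of-splitting and automorphism actions), and your explicit converse is just the straightforward reversal of those computations. Two small slips worth fixing: the complement $V$ is $n$-dimensional, not $(m+n)\cdot \1$ (since $\ker d$ already has dimension $m+n$ inside the $(m+2n)$-dimensional $\mfrak{g}$), and in the converse direction the first listed relation $B(x\otimes y)' = A(x')y + A(y')x$ together with $V_f \subset L$ being a subrepresentation of the adjoint is what makes the constructed bracket commute with $d$ (i.e.\ a morphism in $\Ver_4^+$), so the relations are not accounted for by the Jacobi cases alone and one Jacobi case produces several of the listed lines.
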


We simplify this data in two special cases of this classification: when $\ker d$ is abelian and when $n = 1$.
\begin{proposition}
If $\ker d = L \subset \mfrak{g}$ is abelian, then the data of a Lie algebra structure on $m \cdot \1 + nP$ can be simplified to:
\begin{enumerate}
    \item An $n$-dimensional Lie algebra $\mfrak{g}_0$;
    \item an $m+n$-dimensional $\mfrak{g}_0$-module $L$;
    \item an injective 1-cocycle $d: \mfrak{g}_0 \to L$;
    \item a class $C \in H^2_e(\mfrak{g}_0, L)$ such that $p(C) \in L^{\mfrak{g}_0}$. Here $H^2_e$ is the extended cohomology -- coboundaries for this are the same as in the ordinary case, but cocycles are symmetric rather than skew-symmetric forms satisfying the 2-cocycle condition -- and $p: H^2_e(\mfrak{g}_0, L) \to Z^1(\mfrak{g}_0, L)^{(1)}$ is the map $c(x, y) \mapsto c(x, x)$.
\end{enumerate}
\end{proposition}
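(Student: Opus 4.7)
The plan is to apply Theorem \ref{lie_m_np_classification} and exploit the fact that the structure map $f$ is forced to vanish when $L$ is abelian. To see this, pick $v \in V$ and $\ell \in L$ and apply the derivation $d$ to $[v, \ell] = f(\ell)v + A(\ell)v$: the left-hand side equals $[v', \ell] + [v, \ell'] = [v', \ell] = 0$ (using $\ell' = 0$ and $L$ abelian), while the right-hand side equals $d(f(\ell)v)$. Since $d|_V$ is injective ($V$ is a complement of $\ker d$), we conclude $f(\ell)v = 0$ for all $\ell, v$, so $f \equiv 0$.

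With $f = 0$, most conditions in Theorem \ref{lie_m_np_classification} collapse. The $V$-component of the Jacobi identity reduces to $\sigma_3(B(B \otimes \id)) = 0$, so $B$ is a genuine Lie bracket on $V$; set $\mfrak{g}_0 := (V, B)$, an $n$-dimensional Lie algebra. Defining the action $v \cdot \ell := A(\ell)v$ of $V$ on $L$, the Leibniz rule $B(v, w) \cdot \ell = v \cdot (w \cdot \ell) + w \cdot (v \cdot \ell)$ becomes
\begin{equation*}
A(\ell)B(v, w) + A(A(\ell)v)w + A(A(\ell)w)v = 0,
\end{equation*}
which holds because the equivariance defect $[C, z]$ vanishes identically when $f = 0$ and $L$ is abelian. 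Hence $L$ is an $\mfrak{g}_0$-module of dimension $m + n$. The relation $B(x, y)' = A(x')y + A(y')x$ becomes $d([v, w]) = v \cdot d(w) + w \cdot d(v)$, so $d: \mfrak{g}_0 \to L$ is a 1-cocycle, injective by construction.

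The map $C$ is symmetric since $C(x, y) + C(y, x) = [y', x'] = 0$. The $L$-component of the Jacobi identity on three $V$-elements, after substituting $f = 0$, yields
\begin{equation*}
\sum_{\text{cyc}}\bigl(C(B(x, y), z) + z \cdot C(x, y)\bigr) = 0,
\end{equation*}
which is the 2-cocycle condition for a symmetric $L$-valued form on $\mfrak{g}_0$, so $[C] \in H^2_e(\mfrak{g}_0, L)$. The condition $A(C(x, x)) = A(x')f(x') + (\ad x') \circ A(x')$ has vanishing right-hand side (using $f = 0$ and $L$ abelian), which translates to $v \cdot C(x, x) = 0$ and gives $p(C)(x) = C(x, x) \in L^{\mfrak{g}_0}$. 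Translating $V$ by some $\varphi: V \to L$ shifts $C$ by $(x, y) \mapsto x \cdot \varphi(y) + y \cdot \varphi(x) + \varphi(B(x, y))$, which is exactly a coboundary $\delta \varphi$; hence the class $[C] \in H^2_e(\mfrak{g}_0, L)$ is well-defined.

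Conversely, given $(\mfrak{g}_0, L, d, C)$, one reconstructs $\mfrak{g} := \mfrak{g}_0 \oplus L$ with derivation $d$ extended by $0$ on $L$, and bracket $[v + \ell_1, w + \ell_2] := [v, w]_{\mfrak{g}_0} + C(v, w) + v \cdot \ell_2 + w \cdot \ell_1$. Skew-symmetry follows from symmetry of $C$, abelianness of $L$, and skew-symmetry of $\mfrak{g}_0$. The main obstacle lies in verifying the Jacobi identity: its $V$-component is Jacobi for $\mfrak{g}_0$; its $L$-component on three $V$-elements is the 2-cocycle property of $C$; Jacobi on $(x, x, z)$ with $x \in V, z \in L$ boils down to $A(C(x, x))z = 0$, which is exactly $p(C) \in L^{\mfrak{g}_0}$; and the remaining cases with one or two elements in $L$ follow from $L$ being a $\mfrak{g}_0$-module together with $d$ being a 1-cocycle.
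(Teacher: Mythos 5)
Your argument is correct and follows essentially the same route as the paper: you specialize Theorem \ref{lie_m_np_classification}, observe $f = 0$ (which you prove directly via applying $d$ to $[v,\ell]$, whereas the paper just notes that $V_f$ is a subrepresentation of the adjoint representation of the abelian $L$), reinterpret $A$ as the $\mfrak{g}_0$-module structure on $L$, obtain the module axiom from the $[C,z]$ relation, the injective 1-cocycle property of $d$ from $B(x\otimes y)' = A(x')y + A(y')x$, and the symmetric 2-cocycle, coboundary, and $p(C)\in L^{\mfrak{g}_0}$ conditions exactly as in the paper; your explicit converse reconstruction is a useful addition that the paper leaves implicit. One small slip in that converse: the condition $p(C)\in L^{\mfrak{g}_0}$ is forced by the Jacobi identity on $(x,x,y)$ with $x,y\in V$ (as in your own forward direction), not on $(x,x,z)$ with $z\in L$ --- that case is vacuous because $[[x,x],z]=[C(x,x),z]$ lands in the abelian $L$, and the expression $A(C(x,x))z$ only typechecks for $z\in V$.
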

\begin{proof}
If $L$ is abelian, then $f = 0$ and $\mfrak{g}_0 := \mfrak{g}/L$ is a Lie algebra. We can rewrite $A$ as a map $\rho: \mfrak{g}_0 \to \End(L)$ so that $L$ has the structure of an $\mfrak{g}_0$-module. The Jacobi identity implies that $B$ induces a Lie bracket on $\mfrak{a}$ and that
\begin{align*}
    &C(B(x \otimes y) \otimes v) + \rho(z)C(x \otimes y) \\
    + &C(B(v \otimes x) \otimes y) + \rho(y)C(v \otimes x) \\
    + &C(B(y \otimes v) \otimes x) + \rho(x)C(y \otimes v).
\end{align*}
Also, since $L$ is abelian, $C$ is symmetric. Therefore, $C$ is an extended $2$-cocycle on $\mfrak{g}_0$ taking values in $L$. Since $B$ is the bracket on $\mfrak{g}_0$, choosing a different splitting of $\mfrak{g}$ translates $C$ by a 2-coboundary. Hence, $C \in H^2_e(\mfrak{g}_0, L)$. We have a linear map $p: H^2_e(\mfrak{g}_0, L) \to Z^1(\mfrak{g}_0, L)^{(1)}$, the Frobenius twist of the 1-cocycles, given by $c(x, y) \mapsto c(x, x)$. 
Since $C(x \otimes x) \in \ker \rho$, we then require $p(C) \in L^{\mfrak{g}_0}$. Finally, that $B(x \otimes y)' = \rho(y)x' + \rho(x')y$ is equivalent to $d: \mfrak{g}_0 \to L$ being an injective 1-cocycle.
\end{proof}

\begin{proposition}
    Isomorphism classes of Lie algebra structures on $m \cdot \1 + P$ correspond to tuples $(L, f, A, b, c)$, where
    \begin{enumerate}
        \item $L$ is an ordinary $m + 1$-dimensional Lie algebra;
        \item $f$ is a one-dimensional Lie algebra representation of $L$ on $k$, denoted $k_f$;
        \item $A$ is a class in $H^1(L, L \otimes k_f)$;
        \item $b, c$ are elements in $L$ such that $b$ is central, $c \ne 0$ and $c \in \ker f$, and $Ab = f(Ac)c$;\footnote{$Ab$ is well-defined as $b$ is central, and while $Ac$ is not well-defined, $A^* f$ is well-defined, so $f(Ac)$ is well-defined.}
    \end{enumerate}
    up to scaling $(L, f, A, b, c) \mapsto (L, f, tA, t^2b, tc)$.

    If $f = 0$, then $A \in H^1(L, L)$ and $c \in L$ is also central, so an isomorphism class of Lie algebra structures corresponds to a tuple $(L, A, b, c)$ with $A \in H^1(L, L)$, $b, c \in L$ central elements with $c \ne 0$ and $Ab = 0$, up to rescaling as above.

    If $f \ne 0$, then an isomorphism class of Lie algebra structures corresponds to a tuple $(L_0, D, A_0, a, b, c, q)$ where
\begin{enumerate}
    \item $L_0$ is an $m$-dimensional Lie algebra with central elements $b, c$, $c \ne 0$;
    \item $D$ is a class in $H^1(L_0, L_0)$ such that $Dc = c$ and $Db = 0$. Choosing a representative $\mathbf{D}$ of $D$, we get a Lie algebra $k \cdot p \ltimes L_0$ where $p$ acts by $\mathbf{D}$ on $L_0$;
    \item $a: L_0 \to k$ is a character such that $a(b) = 0$, $a \circ D = a$, and $a \wedge D \in H^2(L_0, L_0)$ vanishes;
    \item $A_0 \in \End(L_0)/\ad(L_0)$ such that $d(A_0) = a \wedge D$, $(D-1)(A_0) = 0 \in H^1(L_0, L_0)$, and $A_0b = a(c)c$; \footnote{$d(A_0)$ is well-defined as $A_0$ is defined up to inner derivations, and $(D-1)(A_0)$ is well-defined as a class in $H^1(L_0, L_0)$ because we check that $d((D-1)A_0) = 0$. $A_0b$ is well-defined because $b$ is central.}
    \item choosing a representative $\mathbf{A}_
    0$ of $A_0$, an element $q \in L_0$ such that $(\mathbf{D}-1)(\mathbf{A}_0) = \ad(q)$
\end{enumerate}
up to rescaling as above.
\end{proposition}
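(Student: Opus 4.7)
The plan is to specialize Theorem \ref{lie_m_np_classification} to $n = 1$, then split into the subcases $f = 0$ and $f \neq 0$. With $V = kx$ one-dimensional, the symmetric map $B: V \otimes V \to V$ satisfying $B(x \otimes x) = 0$ is forced to vanish. The classification data therefore collapses: $f: L \to k$ is a character, $A \in H^1(L, L \otimes k_f)$ is a twisted derivation which I will identify with $\hat A: L \to L$ via $\hat A(z) := A(z)(x)$, and $C$ reduces to the single element $b := C(x \otimes x) \in L$. Set $c := dx \in L$, which is nonzero because $d|_V$ is injective.

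The structural facts governing $(b, c)$ come from two sources. First, $d$ is $L$-equivariant: as a morphism in $\Ver_4^+$, $d$ commutes with $\ad w$ for each $w \in L = \ker d$, and applying $d$ to $[w, x] = f(w) x + \hat A(w)$ yields $[w, c] = f(w) c$, i.e.\ $\ad c = f(\cdot) c$ and $(\ad c)^2 = f(c) f(\cdot) c$. Second, the relations from Theorem \ref{lie_m_np_classification} specialize to $[b, z] = f(c) f(z) c$ and $f(b) = f(c)^2$. Setting $z = b$ and using $[b, b] = 0$ gives $f(c) f(b) = 0$, so $f(c)^3 = 0$ and therefore $f(c) = 0$. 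Hence $b$ is central, $f(b) = 0$, and the relation $A(C(x \otimes x)) = A(x')f(x') + (\ad x')A(x')$ collapses to $\hat A(b) = [c, \hat A(c)]$; substituting $\ad c = f(\cdot) c$ converts this to $\hat A(b) = f(\hat A(c)) c$, which is precisely $Ab = f(Ac) c$. Well-definedness of $Ab$ and $f(Ac)$ modulo coboundaries of $A$ follows from $b$ being central, $f(b) = 0$, and $f$ vanishing on commutators. Automorphisms of $V$ are scalings $x \mapsto tx$ with $t \in k^\times$, and a direct computation gives the stated rescaling $(f, A, b, c) \mapsto (f, tA, t^2 b, tc)$.

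For $f = 0$, equivariance forces $\ad c = 0$, so $c$ is central in $L$, $A$ becomes an ordinary class in $H^1(L, L)$, and $Ab = 0$. For $f \neq 0$, fix $p \in L$ with $f(p) = 1$ so $L = kp \oplus L_0$ with $L_0 := \ker f$, and set $D := \ad p|_{L_0} \in H^1(L_0, L_0)$, which satisfies $Dc = [p, c] = c$ and $Db = [p, b] = 0$. Decomposing $\hat A(z) = a(z) p + \bar A(z)$ for $z \in L_0$, the derivation condition on $\hat A|_{L_0}$ projects onto $kp$ and $L_0$ to show that $a: L_0 \to k$ is a character and that $\bar A$ satisfies $d(\bar A) = a \wedge D$ in Chevalley-Eilenberg cohomology, so $a \wedge D$ is a coboundary. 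Passing to $A_0 := \bar A \bmod \ad L_0$ and using the derivation condition evaluated on pairs involving $p$ extracts the constraints $a \circ D = a$ and $(\mathbf{D}-1)(\mathbf{A}_0) = \ad q$ for some $q \in L_0$. The identity $Ab = f(Ac) c$, projected via the decomposition, becomes $a(b) = 0$ and $A_0 b = a(c) c$ (using $f(\hat A(c)) = a(c)$).

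The main technical obstacle will be the $f \neq 0$ case: tracking the coboundary ambiguity of $A$ together with the ambiguity in the choice of $p \in f^{-1}(1)$, and showing that the derived data $(L_0, D, A_0, a, b, c, q)$ is well-defined modulo the stated equivalences. In particular, verifying that $(D-1)(A_0) = 0$ is the correct cohomology-level identity in $H^1(L_0, L_0)$ rather than a cochain-level one, and that the various ambiguities cancel consistently, is where most of the bookkeeping sits.
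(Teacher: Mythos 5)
Your proposal follows essentially the same route as the paper: specialize Theorem \ref{lie_m_np_classification} to $n=1$ (so $B=0$, $C$ reduces to the single element $b$, $c=dx$), derive $f(c)=0$, centrality of $b$, and $Ab=f(Ac)c$ together with the scaling action, then in the $f\ne 0$ case fix $p$ with $f(p)=1$, set $L_0=\ker f$, $D=\ad p$, and decompose $\hat A$ into $a$ and $A_0$, recovering $a\circ D=a$, $d(A_0)=a\wedge D$, and $(\mathbf{D}-1)(\mathbf{A}_0)=\ad q$ with $q=\mathbf{A}p$ (normalized to lie in $L_0$). The bookkeeping you defer --- how $\mathbf{D}$, $\mathbf{A}_0$, $a$, and $q$ shift under changes of splitting and of $p$, yielding the torsor/well-definedness statements --- is precisely the part the paper carries out explicitly, and it is routine.
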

\begin{proof}
This is the special case $n = 1$ of the previous theorem. In this situation, $V$ is one-dimensional, so $f: L \to k$ is a character of $L$ corresponding to a one-dimensional subrepresentation $k_f$ of $L$ and $A \in H^1(L, L \otimes k_f)$. Let the injection $d: k_f \subset L$ be spanned by $c \in L$, $c \in \ker f$. Choose a representative $\mathbf{A}$ of $A$. Also, $B = 0$, and $C$ is a central element $b \in L$, with $\mathbf{A}b = f(\mathbf{A}(c))c$ and $b \in \ker f$. We see that while $b$ is invariant under translations, under automorphisms of $k_f$, i.e. scaling, $A \mapsto tA$, $c \mapsto tc$, and $b \mapsto t^2 b$.

If $f = 0$, then $A \in H^1(L, L)$ and $c \in L$ is central. Hence, we can write that $Ab = f(Ac)c$, as $H^1(L, L)$ acts on $Z(L) = H^0(L, L)$.

If $f \ne 0$, then $L_0 := \ker f$ is an ideal in $L$ with codimension $1$, and $b, c \in L_0$ are central elements in $L_0$. Fix $p \in L$ with $f(p) = 1$; then $\mathbf{D} := \ad p: L_0 \to L_0$ is a derivation of $L_0$ with $\mathbf{D}c = c$. We recall that $A$ is defined up to inner derivations, so by translating by the appropriate multiple of $p$ we may choose a representative $\mathbf{A}$ of $A$ such that $\mathbf{A}p \in L_0$.

Now for $x \in L_0$, write $\mathbf{A}x = \mathbf{A}_0x + a(x)p$ for $\mathbf{A}_0: L_0 \to L_0$ and $a: L_0 \to k$. That $\mathbf{A}$ is a derivation implies that $a$ is a character and 
$$\mathbf{A}_0[z, x] = [\mathbf{A}_0z, x] + [z, \mathbf{A}_0x] + a(z)\mathbf{D}x + a(x)\mathbf{D}z.$$ 
For $y \in k_f$ and $z \in L$, translating $y \mapsto y + z$ sends $A_0 \mapsto \ad z$ and leaves the others unchanged; translating $p \mapsto p + z$ sends $\mathbf{D} \mapsto \mathbf{D} + \ad z$, $\mathbf{A}_0 + a \otimes z$, and $\mathbf{A}p \mapsto \mathbf{A}p + \mathbf{A}_0 z a(z)p$; we can then translate $y$ by $a(z)p$ to get
\begin{enumerate}
    \item $\mathbf{A} \mapsto \mathbf{A} + \ad z$
    \item $\mathbf{A}_0 + a \otimes z + a(z)\mathbf{D}$
    \item $\mathbf{A}p \mapsto \mathbf{A}p + \mathbf{A}_0z$.
\end{enumerate}
Also, $\mathbf{A}b = f(\mathbf{A}c)c$ implies that $a(b) = 0$ and $\mathbf{A}_0b = a(c)c$. Finally, by writing out $\mathbf{A}[p, x]$ for $x \in L_0$, we see that $a \circ \mathbf{D} = a$ and
$$[\mathbf{D}, \mathbf{A}_0] = \mathbf{A}_0 + \ad \mathbf{A}p.$$

Cohomologically, we can therefore interpret this data as follows:
\begin{enumerate}
    \item We have a class $D := [\mathbf{D}] \in H^1(L_0, L_0)$, a character $a: L_0 \to k$, and central elements $b, c \in L_0$ such that $a \circ D = a$, $Db = 0$, $Dc = c$, $a(b) = 0$, $c \ne 0$, and the class $C_1(a, D) := a \wedge D \in H^2(L_0, L_0)$ vanishes ("first obstruction to Lie algebra structure).
    \item We can define a torsor $T = T(a, D)$ over $H^1(L_0, L_0)$ as follows: pick $p \in L$ such that $f(p) = 1$; then $\mathbf{D} = \ad p$ is a representative of $D$. Let $T$ be the set of $A_0 \in \End(L_0)/\ad(L_0)$ such that $d(A_0) = a \wedge D$. This torsor depends only on $D$ up to canonical isomorphism, since shifting $p$ by $z$ shifts $T$ by $a \otimes z$. 
    \item We have another class $(D-1)(A_0) \in H^1(L_0, L_0)$, since $$d((D-1)(A_0)) = (D-1)d(A_0) = (D-1)(a \wedge D) = 0.$$ This class is well-defined regardless of the representative we choose for $D$, since translating $\mathbf{D}$ by $\ad z$ translates $\mathbf{A}_0$ by $a \otimes z - a(z)\mathbf{D}$, hence translates $(\mathbf{D}-1)(\mathbf{A}_0)$ by $\ad(\mathbf{A}_0 z)$. This class must vanish since $(\mathbf{D}-1)(\mathbf{A}_0) = \ad Ap$ ("second obstruction to Lie algebra structure").
    \item We can then define another torsor $M = M(a, D, A_0)$ over $Z(L_0)$ as the set of elements $q \in L_0$ such that $(\mathbf{D}-1)(\mathbf{A}_0) = \ad(q)$ for representatives $\mathbf{D}, \mathbf{A}
    _0$ of $D, A_0$ respectively. As before, $M$ depends only on $D$ and not $p$ up to canonical isomorphism, since shifting $p$ by $z$ shifts $M$ by $\mathbf{A}_0z$.
\end{enumerate}

Hence, we have the following data:
\begin{enumerate}
    \item A Lie algebra $L_0$ with central elements $b, c$, $c \ne 0$.
    \item A class $D \in H^1(L_0, L_0)$ such that $Dc = c$ and $Db = 0$; choosing a representative $\mathbf{D}$ of $D$, we get a Lie algebra $k \cdot p \ltimes L_0$ where $p$ acts by $\mathbf{D}$ on $L_0$.
    \item A character $a: L_0 \to k$ such that $a(b) = 0$, $a \circ D = a$, and $a \wedge D \in H^2(L_0, L_0)$ vanishes.
    \item A map $\mathbf{A}_0 \in \End(L_0)$ such that its equivalence class $A_0 \in \End(L_0)/\ad(L_0)$ satisfies $d(A_0) = a \wedge D$, $(D-1)[A_0] = 0 \in H^1(L_0, L_0)$, and $A_0(b) = a(c)c$
    \item An element $q \in L_0$ such that $(\mathbf{D}-1)(\mathbf{A}_0)= \ad(q)$, corresponding to $q = \mathbf{A}p$.
\end{enumerate}
\end{proof}
\begin{remark}
    In the $f \ne 0$ case, when $L_0$ is abelian, the data on $D$ and $A_0$ can be simplified to conditions on linear maps:
    \begin{enumerate}
        \item We have $D \in \End(L_0)$ and $a \wedge D: L_0 \wedge L_0 \to L_0$ vanishes.
        \item We have $A_0 \in \End(L_0)$ and $(D-1)A_0 = 0$; $q$ is unrestricted.
    \end{enumerate}
    We still require $Da = a$, $Dc = c$, $Db = 0$, and $A_0(b) = a(c)c$.
\end{remark}

In the case that $L$ is 1-dimensional, corresponding to $\mfrak{g} = P$, we see that we get the same classification as worked out by hand in Section \ref{lie_p_classification}:
\begin{example}
    If $L$ is 1-dimensional, it is abelian. Then $A: L \to L$ is a scalar. 
    \begin{enumerate}
        \item If $A \ne 0$, WLOG $A = 1$, and then $b = 0$. So in this case $[y, y] = 0$ and $[y, y'] = y'$, which is $P_s$.
        \item If $A = 0$, then:
        \begin{enumerate}
            \item If $b = 0$, then $\mfrak{g}$ is abelian, which is $P_a$.
            \item If $b \ne 0$, then we can scale so $b = c$ since $y \mapsto ty$ sends $(A, b, y') \mapsto (tA, t^2 b, ty')$. Then $[y, y] = y'$ and all other brackets are 0, which is $P_n$.
        \end{enumerate}
    \end{enumerate}
\end{example}
(There are no cases when $f \ne 0$ since that requires $L$ to be at least 2-dimensional.)

\subsection{Classification of Lie algebra structures on $\1 + P$}

We now use the general framework to classify isomorphism classes of Lie algebra structures on low-dimensional cases, beginning with $\1 + P$.
\begin{proposition}
\label{1p_lie}
    If the underlying object of $L$ is $\1 + P$ with basis $x, y, y'$, $L$ is isomorphic to exactly one of the following Lie algebras, which are organized by their subalgebras and quotients:

    \begin{center}
        \begin{tabular}{| c | c | c | c | c | c | c |}
            & $[x, y']$ & $[y, y]$ & $[y', y]$ & $[x, y]$ & subalgebras & quotients \\
            \hline
            1 & $0$ & $0$ & $0$ & $0$ & $\1, \1^2, P_a$ & $\1, \1^2, P_a$ \\ \hline
            2 & $y'$ & $0$ & $0$ & $y + \lambda y'$ & $L_2$, $P_a$, $P_s$ & $L_2$ \\ \hline
            3 & $0$ & $y'$ & $0$ & $\lambda x$ & $\1^2$, $P_n$ & $P_n$, $L_2$, $\1$ \\ \hline
            4 & $0$ & $y'$ & $0$ & $y'$ & $\1^2$, $P_n$ & $\1^2$, $\1$ \\ \hline
            5 & $0$ & $x$ & $\lambda x$ & $0$ & $\1^2$ & $P_a$, $\1$ \\ \hline
            6 & $0$ & $x$ & $\lambda x + y'$ & $0$ & $\1^2$ & $\1$, $P_s$, $\1^2$ if $\lambda = 0$, $P_n$ if $\lambda \ne 0$ \\ \hline
            7 & $0$ & $0$ & $0$ & $x$ & $\1^2$, $P_a$ & $P_a$, $L_2$, $\1$ \\ \hline
            8 & $0$ & $0$ & $0$ & $y'$ & $\1^2$, $P_a$ & $\1^2$, $\1$ \\ \hline
            9 & $0$ & $0$ & $y'$ & $x + y'$ & $\1^2$, $P_s$ & $L_2$, $\1$ \\ \hline
            10 & $0$ & $0$ & $y'$ & $\lambda x$ & $\1^2$, $P_s$ & $L_2$, $P_s$, $\1$ \\ \hline
            11 & $0$ & $0$ & $x$ & $x + \lambda y'$ & $\1^2$ & $\1$, $P_s$, $P_a$ if $\lambda = 0$ \\ \hline
            12 & $0$ & $0$ & $x$ & $y'$ & $\1^2$ & $\1, P_s$ \\ \hline
            13 & $0$ & $0$ & $x$ & $0$ & $\1^2, \1$ & $P_a$ \\ \hline
        \end{tabular}
    \end{center}
    Here the subalgebras column indicates which Lie algebra structures on $P$ and $\1^2$ appear as subalgebras in $L$, and likewise for quotients, and $\lambda$ is a parameter taking arbitrary values in $k$.
    
\end{proposition}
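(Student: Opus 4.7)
The plan is to apply Theorem~\ref{lie_m_np_classification} with $m = n = 1$. Write $\mfrak{g} = \1 \oplus P$ with basis $x \in \1$ and $y, y' \in P$, set $V = \langle y \rangle$, and let $L := \ker d = \langle x, y' \rangle$, which by Proposition~\ref{p_lie_algebras} is either abelian (denoted $\1^2$) or isomorphic to $L_2$. The element $c \in L$ from the framework is forced to be $y' = dy$, and the character $f: L \to k$ encoding the $L$-action on $V_f$ satisfies $f(y') = 0$.

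A key simplification reduces the problem to two cases. Writing $[x, y] = f(x)\, y + A(x)\,y$ with $A(x)y \in L = \ker d$, the derivation identity $d[x, y] = [x, y']$ (using $x' = 0$) yields $f(x)\, y' = [x, y']$. Hence $L$ is abelian iff $f = 0$, while $L \cong L_2$ (with $[x, y'] = y'$ after rescaling $x$) iff $f(x) = 1$. In case (ii), $L = L_2$, the vanishing $Z(L_2) = 0$ forces $b := [y, y] = 0$, and the remaining data $A \in H^1(L_2, L_2 \otimes k_f)$ subject to $A(y') = 0$ reduces to the scalar $\lambda$ appearing as the coefficient of $y'$ in $A(x)y$; this scalar is invariant under the residual scaling $y \mapsto ty$ (since $y'$ rescales by the same factor), producing row 2 with parameter $\lambda$.

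In case (i), $L$ abelian and $f = 0$, the data $(A, b)$ with $A \in \End(L)$ and $b \in L$ is subject to $A(y') = 0$, $b$ central (automatic), $A(b) = 0$, and the Jacobi constraints of Section~4, modulo the residual symmetries: scaling $y \mapsto ty$ (sending $(A, b) \mapsto (tA, t^2 b)$), translation $y \mapsto y + \phi$ for $\phi \in L$ (modifying $A$ by a coboundary and $b$ by a controlled term), and basis change of $L$ fixing $\im d = \langle y' \rangle$. Enumerating normal forms of the pair $(A, b)$ by the rank and nilpotence of $A$ together with the value of $b \in \{0, x, y'\}$ up to these symmetries yields rows 1 and 3--13; verifying skew-symmetry and Jacobi for each listed bracket is then a routine check.

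Pairwise non-isomorphism of the 13 rows follows from the tabulated invariants: the subalgebras and quotients of types $\1, \1^2, L_2, P_a, P_s, P_n$, whether the bracket is alternating, and the nilpotence or semisimplicity type of $\ad y$ and $\ad y'$. The main obstacle lies in carefully enumerating case (i) without duplication: the residual symmetry group is not a simple rescaling, so one must verify both that distinct $\lambda$-values in rows 3, 5, 6, 10, and 11 are genuinely non-isomorphic (rather than related by some combined scaling-translation-basis change) and that the rows with specific coefficients cover the complement of these one-parameter families exactly once.
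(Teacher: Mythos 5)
Your setup coincides with the paper's: specialize the Section 4 framework to $m=n=1$, so the data is $(L,f,A,b,c)$ with $c=y'$, and split on whether the character $f$ vanishes, i.e.\ whether $L=\ker d$ is abelian or isomorphic to $L_2$. Your handling of the $f\ne 0$ case is essentially correct and is a mild shortcut compared to the paper, which runs it through the $(L_0,D,A_0,a,q)$ data and finds only $q$ free: your observations that $Z(L_2)=0$ forces $b=[y,y]=0$ and that the $y'$-coefficient of $A(x)y$ is invariant under coboundary shifts and under the coupled rescaling of $y$ and $y'$ do produce row 2 with its genuine parameter $\lambda$ (strictly, the Jacobi constraints give $[y',y]=0$ and allow the $x$-component of $A(x)y$ to be translated away; ``$A(y')=0$'' is the normal form, not the a priori constraint, but this is a presentational slip rather than an error).

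The genuine gap is the $f=0$ case, i.e.\ rows 1 and 3--13, which is nearly the entire statement. You assert that ``enumerating normal forms of the pair $(A,b)$ \dots yields rows 1 and 3--13'' and then explicitly defer the two points that carry all the content: that the enumeration is complete and duplication-free under the residual symmetry group, and that distinct $\lambda$ within rows 3, 5, 6, 10, 11 are non-isomorphic. That enumeration is the proof: the paper's argument is a case analysis on $b=0$ versus $b\ne 0$ and on the Jordan normal form of $A$, tracking where $b$ and $y'$ sit relative to the eigenspaces and kernel of $A$, and using throughout that an automorphism of $\1+P$ commuting with $d$ scales $y'$ by the same factor with which it scales $y$, so that $A$, $b$ and $y'$ cannot be normalized independently (this coupling is exactly why some rows carry a free parameter, e.g.\ 3, 5, 6, while superficially similar configurations collapse to a single isomorphism class, e.g.\ rows 7, 8, 13). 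Similarly, ``pairwise non-isomorphism follows from the tabulated invariants'' is insufficient for the one-parameter families: the subalgebra/quotient columns do not separate different values of $\lambda$ within a row, so one needs the explicit check that any isomorphism commuting with $d$ preserves $\lambda$ (as you did implicitly only for row 2). As written, the proposal is a correct plan that matches the paper's strategy, but the classification itself --- the exact list of thirteen families and their distinctness --- is asserted rather than established.
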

\begin{proof}
First suppose $f = 0$. Then $L$ is 2-dimensional and $y' \ne 0$ is central, so $L$ must have nontrivial center. Hence, $L$ is abelian. We casework on whether or not $b = 0$.
    \begin{enumerate}
    \item If $b = 0$, then $A: L \to L$ can be any linear map. Then we have several cases, depending on the Jordan normal form of $A$.
    \begin{enumerate}
        \item If $A = 0$, then $\mfrak{g}$ is abelian (row 1).
        \item if $A$ is nonzero and diagonalizable, after a change of basis it is of the form $\diag(1, \mu)$.
        \begin{enumerate}
            \item If $Ay' = 0$, we get that $[y, y'] = 0$ and $[y, x] = x$ (after changing basis to $y', Ay'$) (row 7).
            \item If $Ay' = \mu y'$ for $\mu \ne 0$, we can scale so that $Ay' = y'$, in which case we get that $[y', y'] = y'$, $[y', x] = \mu x$ (row 10).
            \item If $Ay'$ is independent of $y'$, in which case $\mu \ne 1$, we can change basis to $Ay', y'$, then scale $A$ so that $[y, Ay'] = Ay' + \lambda y'$ for some $\lambda$ (row 11). We can achieve all $\lambda$ by choosing $A$ appropriately.
        \end{enumerate}
        \item If $A$ is nilpotent, we have two cases:
        \begin{enumerate}
            \item If $Ay' = 0$, then we can change basis so that $[y, x] = y'$, which is the only nonzero bracket (row 8).
            \item If $Ay' \ne 0$, then changing basis so that $x \mapsto Ay'$, we get $[y, y'] = x$ and all other brackets are 0 (row 13).
        \end{enumerate}
        \item If $A$ is not diagonalizable but invertible, we can change basis and rescale so that $A = \begin{pmatrix} 1 & 1 \\ 0 & 1 \end{pmatrix}$. If $Ay' = y'$, then we get the case $[y, y'] = y'$ and $[y, x] = x + y'$ (row 9). If $Ay'$ is independent of $y'$, then we can set $x = Ay'$ and scale appropriately to get $[y, y'] = x$ and $[y, x] = y'$ (row 12).
    \end{enumerate}
    \item If $b \ne 0$, then $A$ must kill $b$. We have the following cases:
    \begin{enumerate}
        \item $A = 0$, so $[y, y]$ is the only nonzero bracket. If $b$ is a multiple of $y'$, we can scale so that $[y, y] = y'$ (row 3 with $\lambda = 0$). If $b$ is not a multiple of $y'$, then we can change basis so $[y, y] = x$ (row 5 with $\lambda = 0$).
        \item $A$ is nilpotent and $Ab = 0$; after change of basis, $A$ is of the form $\begin{pmatrix} 0 & 1 \\ 0 & 0 \end{pmatrix}$. If $b$ is a multiple of $y'$, then we can rescale so $[y, y] = y'$ and $[y, x] = y'$ (row 4). If $b$ is not a multiple of $y'$, then we can transform so $[y, y] = x$, and $[y, y'] = \lambda x$ and can choose $A$ appropriately to get all nonzero values of $\lambda$ (row 5 with $\lambda \ne 0$).
        \item $A$ is diagonalizable, with one nonzero eigenvector $v$ with eigenvalue we can scale to $1$. If $b$ is a multiple of $y'$, we can transform so that $[y, y] = y'$ and $[y, x] = \lambda x$, choosing $A$ appropriately to get all $\lambda$ (row 10). If $y' = v$ is the other eigenvector, we have $[y, y'] = y'$ and $[y, y] = x$, while $[y, x] = 0$ (row 6 with $\lambda = 0$). If both $b$ and $v$ are independent of $y'$, then we can transform so that $[y, y] = x$, $[y, y'] = \lambda x + y'$, and $[y, x] = 0$, and can choose $A$ appropriately to get all nonzero $\lambda$ (row 6 with $\lambda \ne 0$).
    \end{enumerate}
    \end{enumerate}
    
Now suppose $f \ne 0$, so $L_0$ is one-dimensional.
Then the requirement that $Dc = c$ fixes $D$ to be the identity map, while $b = 0$. Since $a \wedge D = 0$, we have $a = 0$; since $(D-1)A_0 = A_0$, we have $A_0 = 0$ also. Meanwhile, $q \in L_0$ is free. Hence, there is only one Lie algebra structure with $f \ne 0$; it has $[x, y'] = y'$ and $[y, x] = y + \lambda y'$ (row 2).
\end{proof}

\begin{remark}
    The Lie algebras described in rows 2 and 7-13 are alternating.
\end{remark}
\begin{remark}
    Let $v, w$ be a basis for $P$, with $v' = w$. The Lie algebra $\mfrak{sl}(P)$ of traceless elements of $\mfrak{gl}(P)$ is spanned by $y := v \otimes w^*$, $y' := v \otimes v^* + w \otimes w^*$, and $x := w \otimes v^*$. We can check that $[y, y] = y'$, $[y, x] = y'$, and $[y', y] = 0$, so this is row 3.

    The Lie algebra $\mfrak{pgl}(P) := \mfrak{gl}(P)/I$ is spanned by (the equivalence clasess of) $y := v \otimes v^*$, $y' := w \otimes v^*$, and $x := v \otimes w^*$. We can check that $[y, y] = 0$, $[x, y] = x$, and $[y', y] = y'$. Hence, row 10 with $\lambda = 1$ is $\mfrak{pgl}(P)$.
\end{remark}

\subsection{Classification of Lie algebra structures on $2 \cdot \1 + P$}

In this section, we classify Lie algebra structures on $2 \cdot \1 + P$. Let $v, v'$ be a basis for $P$ and $x, y$ be a basis for $2 \cdot \1$. When $L$ or $L_0$ is not abelian, we use $\mathbf{A}$ for a linear map and $A = [\mathbf{A}]$ for its cohomology class.

First suppose that $f = 0$. Then $L$ is a 3-dimensional Lie algebra with nontrivial center; say $L$ has basis $x, y, z$. Either $L$ is abelian, $[x, y] = z$ is the only nonzero bracket, or $[x, y] = x$ is the only nonzero bracket. Let $A_{xx}$ be the $x$-coefficient of $Ax$ and $A_{yy}$ be the $y$-coefficient of $Ay$ with respect to the basis $x, y, z$. 

\begin{proposition}
    Suppose that $f = 0$ and $L$ is isomorphic to the 3-dimensional Lie algebra with $[x, y] = z$ the only nonzero bracket. The isomorphism classes of such Lie algebra structures are the following, where each one has $[x, y]=v'$:
\begin{center}
    \begin{tabular}{| c | c | c | c | c |}
        & $[v, v]$ & $[v, v']$ & $[v, x]$ & $[v, y]$ \\
        \hline
        1.1 & $0$ & $0$ & $0$ & $\lambda x$ \\ \hline
        1.2 & $0$ & $0$ & $x$ & $\lambda x + y$ \\ \hline
        1.3 & $0$ & $v'$ & $\lambda x$ & $(\lambda + 1)y$ \\ \hline
        1.4 & $v'$ & $0$ & $0$ & $\lambda x$ \\ \hline
        1.5 & $\lambda v'$ & $0$ & $x$ & $\mu x + y$ \\ \hline
    \end{tabular}
\end{center}
\end{proposition}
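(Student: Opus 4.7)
The plan is to apply the $f = 0$ case of the classification of Lie algebra structures on $m \cdot \1 + P$ from the previous subsection, with $m = 2$ and $L$ equal to the three-dimensional Heisenberg Lie algebra with basis $x, y, z$ and bracket $[x, y] = z$. That framework parameterizes isomorphism classes by tuples $(L, A, b, c)$ with $A \in H^1(L, L)$, $b, c \in Z(L)$, $c \ne 0$, and $Ab = 0$, modulo the rescaling $(A, b, c) \mapsto (tA, t^2 b, tc)$ and $\Aut(L)$. For Heisenberg $L$ we have $Z(L) = kz$, so applying $\Aut(L)$ lets me normalize $c = z$, and then $b = \beta z$ for some $\beta \in k$.

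Next I would compute $H^1(L, L)$: a derivation $D$ is determined by $Dx$ and $Dy$, with the Leibniz rule forcing $Dz = \on{tr}(M)\, z$ for $M = D|_{\langle x, y\rangle}$, and the inner derivations $\ad(x), \ad(y)$ kill the $z$-components of $Dy, Dx$ respectively. So each class in $H^1(L, L)$ is uniquely represented by a matrix $M \in \Mat_2(k)$, and the constraint $Ab = 0$ becomes $\beta \on{tr}(M) = 0$. The stabilizer of $c = z$ within $\Aut(L) \times k^*$ forces $t = (\det P)^{-1}$ for $P = \phi|_{\langle x, y\rangle} \in \GL_2$; decomposing $P = \alpha Q$ with $Q \in \SL_2$, the effective action becomes the joint rescaling $M \mapsto u Q M Q^{-1}$, $\beta \mapsto u\beta$, where $u = \alpha^{-2}$ ranges over $k^*$ since $k$ is algebraically closed.

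I would then casework on the $\SL_2$-conjugacy class of $M$, refined by the condition $\beta \on{tr}(M) = 0$. In the $\beta = 0$ branch: nilpotent $M$ yields Row 1.1; non-nilpotent trace-zero $M$ (a Jordan block in characteristic $2$) yields Row 1.2; and $\on{tr}(M) \ne 0$ yields Row 1.3 after using the $u$-scaling to set $\on{tr}(M) = 1$ and diagonalizing. In the $\beta \ne 0$ branch (which forces $\on{tr}(M) = 0$): nilpotent $M$ yields Row 1.4, and non-nilpotent $M$ (either scalar or Jordan block) yields Row 1.5.

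The main obstacle is the moduli bookkeeping in Rows 1.3 and 1.5, where a continuous parameter genuinely survives the joint $(Q, u)$-action and one must check which parameter values give inequivalent isomorphism classes. A characteristic-two subtlety complicates this: trace-zero $2 \times 2$ matrices have both diagonal entries equal (since $\alpha_1 + \beta_2 = 0$ in char $2$ gives $\beta_2 = \alpha_1$), so they have repeated eigenvalue and non-scalar trace-zero matrices are Jordan blocks rather than diagonalizable with distinct eigenvalues; the scalar versus Jordan-block cases within Row 1.5 must therefore be separated carefully.
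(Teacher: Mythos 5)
Your proposal is correct and takes essentially the same route as the paper: both apply the $n=1$, $f=0$ framework with $L$ the Heisenberg algebra, normalize $c=z$ and $b=\beta z$, and then casework on the Jordan type of the derivation class subject to $\beta\operatorname{tr}(M)=0$, with your up-front identification $H^1(L,L)\cong \Mat_2(k)$ and the residual $(Q,u)$-action being a cleaner bookkeeping of the paper's $3\times 3$ representatives adjusted case-by-case by inner derivations. The only tiny slip is the parenthetical in the $\beta=0$, trace-zero, non-nilpotent case: $M$ may also be scalar rather than a Jordan block, but that still lands in Row 1.2 (with $\lambda=0$), exactly as your own characteristic-2 caveat anticipates.
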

\begin{proof}
Since $z$ spans the center of $L$, $c$ is a nonzero multiple of $z$, and we can scale so that $c = z$. Now $b$ is a central element in the kernel of $\mathbf{A}$, so $b = \lambda z$. Moreover, in order for $\mathbf{A}$ to be a derivation, we must have that $\mathbf{A}z$ is central and that $$\mathbf{A}z = [\mathbf{A}x, y] + [x, \mathbf{A}y].$$ We have two cases:
\begin{enumerate}
    \item If $b \ne 0$, i.e. $\lambda \ne 0$, then $\mathbf{A}z = 0$. For $\mathbf{A}$ to be a derivation, the only relation $\mathbf{A}$ must satisfy now is $$\mathbf{A}z = [\mathbf{A}x, y] + [x, \mathbf{A}y].$$ Hence $\mathbf{A}_{xx} = \mathbf{A}_{yy}$ and $A$ is traceless. Then we have the following cases for $\mathbf{A}$, some of which result in the same cohomology class as they differ by an inner derivation:
    \begin{enumerate}
        \item $\mathbf{A} = 0$, in which case $[v, v] = \lambda v'$ is the only nonzero bracket; then we can scale so that $[v, v] = v'$ (row 4 when $\lambda = 0$).
        \item $\mathbf{A}$ is nilpotent and is a single Jordan block. Hence, with change of basis we can set $\mathbf{A}x = z$ and $\mathbf{A}y = \mu x$. Adjusting by $\ad y$, which sends $x \mapsto z$, we can make it so that $\mathbf{A}$ has instead two Jordan blocks with the Jordan block of $z$ one-dimensional, so $\mathbf{A}$ is in the same class as in the next case.
        \item $\mathbf{A}$ is nilpotent, has two Jordan blocks, and the Jordan block of $z$ is one-dimensional. Then with change of basis we can set $\mathbf{A}x = 0$ and $\mathbf{A}y = \mu x, \mu \ne 0$. Hence, $[v, x] = 0$, $[v, y] = \mu x$, $[v, v'] = 0$, and $[v, v] = \lambda v'$. We can rescale so that $[v, v] = v'$ (by sending $v \mapsto \lambda^{-1}v$ and $x \mapsto \lambda^{-1}x$) (row 4 when $\lambda \ne 0$).
        \item $\mathbf{A}$ is nilpotent, has two Jordan blocks, and the Jordan block of $z$ is two-dimensional. Then with change of basis we have $\mathbf{A}x = 0$ and $\mathbf{A}y = z$. Adjusting by $\ad x$, which sends $y \mapsto z$, we see that $[\mathbf{A}] = 0$.
        \item $\mathbf{A}$ is not nilpotent: then we can scale $\mathbf{A}$ to have nonzero generalized eigenvalue $1$ and with change of basis we can set $\mathbf{A}x = x$ and $\mathbf{A}y = \mu x + y$. So $[v, x] = x$, $[v, y] = \mu x + y$, $[v, v'] = 0$, and $[v, v] = \lambda v'$ (row 5).
    \end{enumerate}
    \item If $b = 0$, then $\mathbf{A}z$ is not necessarily $0$, but it is still a multiple of $z$. If $\mathbf{A}z = 0$, then the cases for $\mathbf{A}$ are the same as above, except that $[v, v] = 0$ instead of $v'$ (rows 1 and 2). If $\mathbf{A}z \ne 0$, we can rescale $\mathbf{A}$ so that $\mathbf{A}z = z$. Then we must have $\mathbf{A}_{xx} = \mathbf{A}_{yy} + 1$, so $\mathbf{A}$ is traceless. We have two cases, which both correspond to the same cohomology class:
    \begin{enumerate}
        \item If $\mathbf{A}$ has generalized eigenvalues $\lambda, \lambda + 1, 1$ with $\lambda \ne 1, 0$, then $\mathbf{A}$ is diagonalizable. So we have $\mathbf{A}x = \lambda x$ and $\mathbf{A}y = (\lambda + 1)y$. This gives the Lie algebra with $[v, x] = \lambda x$, $[v, y] = (\lambda + 1)y$, and $[v, v'] = v'$ (row 3).
        \item If $\mathbf{A}$ is not diagonalizable, after change of basis it will be of the form $\begin{pmatrix} 1 & \lambda & 0 \\ 0 & 1 & 0 \\ 0 & 0 & 0 \end{pmatrix}$. Adjusting by $\lambda \ad y$, which sends $x \mapsto \lambda z$, we get a diagonalizable matrix, so $[\mathbf{A}]$ is the same as above.
    \end{enumerate}
\end{enumerate}
\end{proof}

\begin{proposition}
    Suppose $f = 0$ and $L$ is isomorphic to the 3-dimensional Lie algebra with $[x, y] = x$ the only nonzero bracket. The isomorphism classes of Lie algebra structure satisfying the above are:
\begin{center}
    \begin{tabular}{| c | c | c | c | c |}
        & $[v, v]$ & $[v, v']$ & $[v, x]$ & $[v, y]$ \\
        \hline
        2.1 & $0$ & $0$ & $0$ & $v'$ \\ \hline
        2.2 & $0$ & $0$ & $0$ & $0$ \\ \hline
        2.3 & $0$ & $0$ & $0$ & $y$ \\ \hline
        2.4 & $0$ & $v'$ & $0$ & $\lambda v'$ \\ \hline
        2.5 & $0$ & $v'$ & $0$ & $\lambda y$ \\ \hline
        2.6 & $\lambda v'$ & $0$ & $0$ & $v'$ \\ \hline
        2.7 & $v'$ & $0$ & $0$ & $0$ \\ \hline
        2.8 & $\lambda v'$ & $0$ & $0$ & $y$ \\ \hline
    \end{tabular}
\end{center}
\end{proposition}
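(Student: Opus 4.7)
The plan is to apply the framework of the previous proposition (the classification of Lie algebra structures on $m \cdot \1 + P$ when $f = 0$) to the specialization where $L$ is isomorphic to the three-dimensional non-abelian Lie algebra $\mathfrak{r}$ with defining relation $[e_1, e_2] = e_1$. The center of $\mathfrak{r}$ is one-dimensional, spanned by $e_3$, so since $c = v'$ must be a nonzero central element of $L$ we may identify $v'$ with $e_3$; any other central element $b$ is then of the form $\lambda v'$.

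Next I would compute $H^1(L, L)$ by a direct enumeration of derivations. A derivation $D$ on the basis $\{e_1, e_2, e_3\}$ of $L$ is forced (by $D[e_1, e_2] = De_1$ together with $e_3$ being central) to have the form $De_1 = \alpha e_1$, $De_2 = \beta e_1 + \mu e_3$, $De_3 = \nu e_3$; modding out by the inner derivations, which span the $\alpha$ and $\beta$ directions via $\operatorname{ad} e_2$ and $\operatorname{ad} e_1$, leaves the two-parameter cohomology class $[\mathbf{A}]$ determined by $(\mu, \nu)$. The constraint $\mathbf{A}(b) = 0$ then reduces to $\lambda \nu = 0$, so the natural split of the classification is into the two regimes $b = 0$ (leaving $(\mu, \nu)$ arbitrary) and $b \ne 0$ (forcing $\nu = 0$).

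Within each regime, I would run a case analysis on $(\mu, \nu)$ analogous to the Jordan-form analysis used in the proof of the $[e_1, e_2] = e_3$ case just above, normalizing nonzero parameters to $1$ by means of the rescaling $(A, b, c) \mapsto (tA, t^2 b, tc)$, and further identifying presentations using the automorphisms of $L$ that fix $v' = c$ as a central element --- in particular swapping $e_1 \leftrightarrow e_2$, which after a corresponding basis change on $2 \cdot \1$ replaces the defining relation $[x, y] = x$ by $[x, y] = y$. Several rows of the table, notably those in which $[v, y]$ acquires a $y$-component rather than a $v'$-component, arise from non-canonical representatives of the cohomology class combined with such a basis swap. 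The eight resulting normal forms then make up the table, with some rows carrying a free parameter that indexes a one-parameter family of isomorphism classes.

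The main obstacle I anticipate is the bookkeeping of symmetries in the third step: because one is free to choose both the representative of $[\mathbf{A}] \in H^1(L, L)$ and the identification of the basis $x, y$ of $2 \cdot \1$ with specific elements of $L$, cases that look different at first can coincide, while cases that look similar may be genuinely non-isomorphic. To confirm pairwise non-isomorphism of the final eight entries I would compute invariants of $\mathfrak{g}$ --- the dimension of its center, the structure of the derived subalgebra $[\mathfrak{g}, \mathfrak{g}]$, and the nilpotency or semisimplicity of $\operatorname{ad}(v)$ --- and verify that these distinguish the listed normal forms, parameter by parameter.
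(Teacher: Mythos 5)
Your setup is the same as the paper's: you identify $c=v'$ with the central element $e_3$, compute that every derivation satisfies $De_1=\alpha e_1$, $De_2=\beta e_1+\mu e_3$, $De_3=\nu e_3$, quotient by the inner derivations $\ad e_1,\ad e_2$, and split on $b=0$ versus $b\neq0$ via $Ab=0$ (equivalently $\lambda\nu=0$). Up to that point the proposal is correct. The gap is in the step that carries all of the content of the proposition, which you only announce. First, the normalization tools you invoke do not behave as you assume: the rescaling $(A,b,c)\mapsto(tA,t^2b,tc)$ is tied to rescaling $v$ and hence $v'$, and every automorphism of $L$ is forced to fix the $e_2$-coefficient of $\varphi(e_2)$ to be $1$ (from $\varphi(e_1)=[\varphi(e_1),\varphi(e_2)]$), so $e_2$ cannot be rescaled. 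Consequently, when $[v,v']=0$ the coefficient $\mu$ in $[v,y]=\mu v'$ transforms trivially under all admissible changes of basis and is \emph{not} normalizable to $1$ (it is only translatable by multiples of $\nu$), whereas $\nu$ and $\lambda$ do scale. So the ``Jordan-form, normalize nonzero parameters to $1$'' template from the $[x,y]=z$ case does not transfer wholesale, and deciding which table entries carry honest parameters is exactly the bookkeeping you defer.

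Second, and more seriously, the mechanism you propose for the rows in which $[v,y]$ has a $y$-component cannot produce them as isomorphism classes. Your own computation of $H^1(L,L)$ shows $De_2$ has no $e_2$-component, so within the presentation $[x,y]=x$ a bracket $[v,y]=\mu y$ never arises (indeed it violates the Jacobi identity on $(v,x,y)$). Passing to a ``non-canonical representative of the cohomology class'' means translating $v$ by an element of $L$, which yields an isomorphic Lie algebra, and the swap $e_1\leftrightarrow e_2$ is not an automorphism of $L$ (it changes the defining relation) but a relabelling of the presentation; combining the two therefore only relabels structures already accounted for --- for instance $[v,x]=\alpha x$ is removed by $v\mapsto v+\alpha y$ --- and cannot certify rows $2.3$, $2.5$, $2.8$ as classes distinct from the others. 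So your sketch, carried out consistently, either omits those rows or exhibits them as redundant; it does not prove the table as stated. (This is precisely the murkiest point of the argument: the derivation constraint you and the paper both derive rules out $\mathbf{A}_{yy}\neq0$, yet those rows require it, so this case demands an explicit resolution rather than an appeal to representative choice plus relabelling.) Finally, the pairwise non-isomorphism check via center, derived subalgebra and $\ad(v)$ is left unexecuted, so even for the unproblematic rows the classification is not established.
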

\begin{proof}
As in the $[x, y] = z$ case, $z$ spans the center of $L$, so $c$ is a nonzero multiple of $z$, and we can scale so that $c = z$, while $b$ is a multiple of $z$. The condition that $\mathbf{A}$ is a derivation is equivalent to $\mathbf{A}z$ being central, so $\mathbf{A}z = \lambda z$, and that $\mathbf{A}x = [\mathbf{A}x, y] + [x, \mathbf{A}y]$, which reduces to $\mathbf{A}x = \mu x$ and $\mathbf{A}_{yy} = 0$. Hence, both $x$ and $z$ are eigenvectors for $\mathbf{A}$. We can add $\mu \ad y$ to $\mathbf{A}$ to get $\mathbf{A}x = 0$.

If $b \ne 0$, then $\mathbf{A}z = 0$, so $\mathbf{A}$ is determined by where it sends $y$; because $\ad x: y \mapsto x$, we can make it so that $\mathbf{A}_{yx} = 0$. Then
\begin{enumerate}
    \item if $\mathbf{A}_{yy} = 0$, then we can scale $A$ so that $Ay = z$ (row 6) or $Ay = 0$. If $Ay = 0$, we can scale $A$ so that $[v, v] = v'$ (row 7).
    \item If $\mathbf{A}_{yy} = \lambda \ne 0$, we can scale so that $\mathbf{A}_{yy} = 1$, and changing basis via $y \mapsto Ay$ gives $\mathbf{A}y = y$ (row 8).
\end{enumerate}

If $b = 0$, then
\begin{enumerate}
    \item If $\mathbf{A}z = 0$, the cases are the same as above (rows 1-3).
    \item If $\mathbf{A}z \ne 0$, we can scale $\mathbf{A}$ so that $\mathbf{A}z = z$. Then we can no longer scale $\mathbf{A}$ so that a coefficient in $\mathbf{A}y$ is one, so either $\mathbf{A}y = \lambda z$ (row 4) or $\mathbf{A}y = \lambda y$ (row 5).
\end{enumerate}
\end{proof}

\begin{proposition}
    Suppose $f = 0$ and $L$ is abelian. If $b = 0$, then the isomorphism classes of such Lie algebras are
    \begin{center}
    \begin{tabular}{| c | c | c | c | c | c |}
        & $[v, v]$ & $[v, v']$ & $[v, x]$ & $[v, y]$ & JNF of $A$ \\
        \hline
        3.1 & $0$ & $0$ & $0$ & $0$ & $0$\\ \hline
        3.2 & $0$ & $v' + x + y$ & \multirow{3}{*}{$\lambda x$} & \multirow{3}{*}{$\mu y$} & \multirow{3}{*}{$\diag(1, \lambda, \mu)$\footnote{If $\lambda = \mu$, then $x, y$ are indistinguishable, so either $Av' = v' + y$ or $Av' = v'$ and row 2 is redundant. If $\lambda = \mu = 1$, we have only the case that $Av' = v'$ and both rows 2 and 3 are redundant.}} \\ \cline{1-3}
        3.3 & $0$ & $v' + y$ & & & \\ \cline{1-3}
        3.4 & $0$ & $v'$ & & & \\ \hline
        3.5 & $0$ & $0$ & $x$ & $\lambda y$ & $\diag(0, 1, \lambda)$ \\ \hline
        3.6 & $0$ & $v' + x + \lambda y$ & $x$ & $x + y$ & \multirow{3}{*}{$\begin{pmatrix} 1 & 1 & 0 \\ 0 & 1 & 1 \\ 0 & 0 & 1 \end{pmatrix}$} \\ \cline{1-5}
        3.7 & $0$ & $v' + x$ & $x$ & $v' + y + \lambda x$ & \\ \cline{1-5}
        3.8 & $0$ & $v'$ & $x + v'$ & $x + y$ & \\ \hline
        3.9 & $0$ & $\alpha x + y$ & $0$ & $x$ & \multirow{3}{*}{$\begin{pmatrix} 0 & 1 & 0 \\ 0 & 0 & 1 \\ 0 & 0 & 0 \end{pmatrix}$} \\ \cline{1-5}
        3.10 & $0$ & $x$ & $0$ & $v'$ & \\ \cline{1-5}
        3.11 & $0$ & $0$ & $v'$ & $x$ & \\ \hline
        3.12 & $0$ & $\lambda v' + (\lambda + 1)(x + y)$ & $x$ & $x + y$ & \multirow{4}{*}{$\begin{pmatrix} 1 & 1 & 0 \\ 0 & 1 & 0 \\ 0 & 0 & \lambda \end{pmatrix}$\footnote{Note that if $\lambda = 1$, we have only three cases: $v' = x$ (so $Av' = v'$ and $Ay = v' + y$, row 15), $v' = y$ (so $Av' = v' + x$, row 14), or $v' = z$ (so $Av' = v'$ and $Ax = x + y$, row 13).}} \\ \cline{1-5}
        3.13 & $0$ & $v' + (\lambda + 1)x$ & $x$ & $x + y$ & \\ \cline{1-5}
        3.14 & $0$ & $v' + x$ & $x$ & $\lambda y$ & \\ \cline{1-5}
        3.15 & $0$ & $v'$ & $v' + x$ & $\lambda y$ & \\ \hline
        3.16 & $0$ & $v' + x + y$ & $0$ & $x$ & \multirow{4}{*}{$\begin{pmatrix}0 & 1 & 0 \\ 0 & 0 & 0 \\ 0 & 0 & 1 \end{pmatrix}$} \\ \cline{1-5}
        3.17 & $0$ & $v' + x$ & $0$ & $x$ & \\ \cline{1-5}
        3.18 & $0$ & $x$ & $0$ & $y$ & \\ \cline{1-5}
        3.19 & $0$ & $0$ & $v'$ & $y$ & \\ \hline
        3.20 & $0$ & $x$ & $0$ & $0$ & \multirow{3}{*}{$\begin{pmatrix}0 & 1 & 0 \\ 0 & 0 & 0 \\ 0 & 0 & 0 \end{pmatrix}$} \\ \cline{1-5}
        3.21 & $0$ & $0$ & $0$ & $x$ & \\ \cline{1-5}
        3.22 & $0$ & $0$ & $v'$ & $0$ & \\ \hline

    \end{tabular}
    \end{center}
\end{proposition}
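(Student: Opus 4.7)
We apply the general framework specialized to $f = 0$, $L$ abelian, and $b = 0$: the data of a Lie algebra structure reduces to a pair $(A, v') \in \End(L) \times (L \setminus \{0\})$, and isomorphism classes correspond to orbits under $\GL(L)$ (acting by conjugation on $A$ and the defining action on $v'$) together with the scaling $(A, v') \mapsto (tA, tv')$ coming from $\GL(V) = k^{*}$. The remaining axioms from Theorem~\ref{lie_m_np_classification} are trivially satisfied when $L$ is abelian, $f = 0$, $B = 0$, and $b = 0$, so there are no Jacobi constraints to impose and the entire problem reduces to a linear-algebraic orbit classification.

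The plan is to case-split on the Jordan normal form of $A$. The JNF types to enumerate are: (i) $A = 0$; (ii) $A$ diagonalizable with three nonzero eigenvalues, normalized via the scaling to $1, \lambda, \mu$; (iii) $A$ diagonalizable with one zero eigenvalue and two others normalized to $1, \lambda$; (iv) $A$ has a single $3 \times 3$ Jordan block with eigenvalue $1$ or $0$; and (v) $A$ has one $2 \times 2$ Jordan block together with one $1 \times 1$ Jordan block, with various eigenvalue configurations. For each type I would fix a canonical representative for $A$ and compute the centralizer $Z_{\GL(L)}(A)$ explicitly: polynomials in the nilpotent part $A - eI$ for a single Jordan block, a torus of diagonal matrices in an eigenbasis for $A$ semisimple with distinct eigenvalues, and block-diagonal combinations in mixed cases. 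I would then enumerate the orbits of this centralizer acting on $L \setminus \{0\}$. Each orbit yields one row of the table by choosing a basis $v', x, y$ of $L$ in which the orbit representative sits in a canonical position, and reading off $A(v')$, $A(x)$, $A(y)$ in that basis.

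The main obstacle is purely organizational: avoiding double-counting between cases and verifying that the continuous parameters $\lambda, \mu$ appearing in the brackets are genuine isomorphism invariants rather than eliminable by additional centralizer action. Special attention is needed in degenerate subcases with coincident eigenvalues, where the centralizer enlarges and previously distinct orbits of $v'$ collapse; this is precisely the content of the footnotes attached to rows 3.2--3.4 and 3.12--3.15. Cross-row non-isomorphism then follows from JNF-level invariants (the block partition and the eigenvalue ratios modulo the $k^{*}$-scaling) together with the ``height'' of $v'$ in the generalized-eigenspace filtration of $A$, which records within each JNF type how $v'$ meets the Jordan block structure and distinguishes, say, rows 3.6, 3.7, and 3.8 from one another. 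No further theoretical input beyond Theorem~\ref{lie_m_np_classification} is required.
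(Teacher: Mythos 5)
Your proposal is correct and follows essentially the same route as the paper: after noting that all constraints of Theorem~\ref{lie_m_np_classification} degenerate (so the data is just a pair $(A, v')$ with $A \in \End(L)$, $v' \ne 0$, up to $\GL(L)$, plus the $k^*$-scaling of $A$), the paper likewise caseworks on the Jordan normal form of $A$ and then normalizes the position of $v'$ using the residual transformations, which is exactly your centralizer-orbit enumeration phrased hands-on. The only difference is presentational: you package the residual freedom as $Z_{\GL(L)}(A)$-orbits and invariants (support/height of $v'$ relative to the Jordan structure), while the paper carries out the same normalizations row by row, including the degenerate coincident-eigenvalue subcases recorded in the footnotes.
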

\begin{proof}
Since $[v, v] = b = 0$ and $L$ is abelian, $A$ can be any linear map $L \to L$.
\begin{enumerate}
    \item If $A = 0$, then $\mfrak{g}$ is abelian (row 1).
    \item If $A$ is nonzero and diagonalizable:
    \begin{enumerate}
        \item If $Av' \ne 0$, then suppose $x, y, z$ is an eigenbasis for $A$ where $v' = x + a y + b z$ and $x$ has nonzero eigenvalue (here it is possible for $a = b = 0$, in which case $v'$ is an eigenvector). We can scale $A$ so that $Ax = x$, while $Ay = \lambda y, Az = \mu z$. Then $Av' = v' + \alpha y + \beta z$. Note that $y$ and $z$ can be scaled, so either $Av' = v' + y + z$ (row 2), $Av' = v' + z$ (row 3), or $Av' = v'$ (row 4). (Additionally, if $\lambda = \mu$, then $y, z$ are indistinguishable, so either $Av' = v' + z$ or $Av' = v'$. If $\lambda = \mu = 1$, we have only the case that $Av' = v'$.)
        \item If $Av' = 0$, then there is another eigenvector $x$ with nonzero eigenvalue. By scaling $A$, we can ensure that $Ax = x$. So we have $Av' = 0$, $Ax = x$, and $Ay = \lambda y$ (row 5).
    \end{enumerate}
    \item If $A$ is a single Jordan block:
    \begin{enumerate}
        \item If $A$ has nonzero eigenvalue, then after rescaling and a change of basis it is of the form $\begin{pmatrix} 1 & 1 & 0 \\ 0 & 1 & 1 \\ 0 & 0 & 1 \end{pmatrix}$ for basis $x, y, z$. We can check that if $v' = \alpha x + \beta y + \gamma z$, then $Av' = v' + \beta x + \gamma y$. If $\gamma \ne 0$, then $x, y, v'$ is a basis for $A$ where $Ax = x$, $Ay = x + y$, and $Av' = v' + \beta x + \gamma y$ (row 6). Note that $\beta$ and $\gamma$ scale together and $\gamma \ne 0$, so we can scale so that $\gamma = 1$. If $\gamma = 0$ but $\beta \ne 0$, then (after scaling $y$) $x, v', z$ is a basis where $Ax = x$, $Av' = v' + \beta x$, and $Az = z + v' + \lambda x$ (row 7). Again, note that $\beta$ and $\lambda$ scale together and $\beta \ne 0$, so we can set $\beta = 1$. If $\gamma = \beta = 0$, then we can scale so that $Av' = v'$, $Ay = y + v'$, and $Az = z + y$ (after relabeling $y, z$ to $x, y$ respectively, row 8). 
        % If $v'$ is a multiple of $x$, we can scale so that $Av' = v'$, $Ay = y + v'$, $Az = z + y$ (after relabeling $y, z$ to $x, y$ respectively, row 6). If $v'$ is a multiple of $y$, we can scale $x, z$ so that $Ax = x$, $Av' = v' + x$, $Az = z + v'$ (after relabeling $z$ to $y$, row 7). If $v'$ is a multiple of $z$, we can scale so that $Ax = x$, $Ay = x + y$, $Av' = y + v'$ (row 8).
        \item If $A$ is nilpotent, then after a change of basis it is of the form $\begin{pmatrix} 0 & 1 & 0 \\ 0 & 0 & 1 \\ 0 & 0 & 0 \end{pmatrix}$, with basis $x, y, z$. We can shift $y \mapsto y + \lambda x$ and $z \mapsto z + \lambda y$, so we can make it so that either $v'$ is a multiple of $x$ or $v'$ is in the span of $y, z$. If $v'$ has nonzero $z$-coefficient, then after scaling we have $v' = \alpha y + z$, so $Ax = 0$, $Ay = x$, and $Av' = \alpha x + y$ (row 9). If $v'$ is a multiple of $y$, then after scaling we have $Ax = 0$, $Av' = x$, $Az = v'$ (row 10). If $v'$ is a multiple of $x$, we have $Av' = 0$, $Ay = v'$, $Az = y$ (row 11).
        
        % If $v'$ is a multiple of $z$, we have $Ax = 0$, $Ay = x$, and $Av' = y$ (row 9). If $v'$ is a multiple of $x$, we have $Av' = 0$, $Ay = v'$, $Az = y$ (after relabeling $y, z$ to $x, y$ respectively, row 10). If $v'$ is in the span of $x, y$, then after scaling we have $Ax = 0$, $Av' = x$, and $Az = v'$ (after relabeling $z$ to $y$, row 11). If $v'$ is not in any of these categories, then $Ax = 0$, $Ay = x$, and $Av' = \lambda x + \mu y$; with change of basis, either $Av' = x$ (row 12) or $Av' = y$ (row 13). 
    \end{enumerate}
    \item If $A$ is two Jordan blocks, we casework on the generalized eigenvalues of $A$.
    \begin{enumerate}
        \item If the 2-dimensional Jordan block has nonzero generalized eigenvalue, then we can scale $A$ so that after a change of basis it is of the form $\begin{pmatrix} 1 & 1 & 0 \\ 0 & 1 & 0 \\ 0 & 0 & \lambda \end{pmatrix}$. If $v' = \alpha x + \beta y + \gamma z$, then $Av' = \lambda v' +  ((\lambda + 1)\alpha + \beta)x + (\lambda + 1)\beta y$. 
        
        We can scale $x, y$ together, translate $y$ by multiples of $x$, and scale $z$. If $\gamma \ne 0$, we can scale it to $1$ and then have two cases: if $\beta \ne 0$, then with basis $x, y, v'$, we have $Ax = x$, $Ay = y + x$, and $Av' = \lambda v' + (\lambda + 1)(x + y)$ (row 12); if $\beta = 0$, then with basis $x, y, v'$, we have $Ax = x$, $Ay = y + x$, and $Av' = v' + (\lambda +1)x$ (row 13).
        
        If $\gamma = 0$, we have two cases: if $\beta \ne 0$, then with basis $x, v', z$, we have $Ax = x$, $Av' = v' + x$, and $Az = \lambda z$ (after relabeling $z$ to $y$, row 14); if $\beta = 0$, then with basis $v', y, z$, we have $Av' = v'$, $Ay = v' + y$, and $Az = \lambda z$ (after relabeling $y, z$ to $x, y$ respectively, row 15).

        Note that if $\lambda = 1$, then $y$ can be shifted by $x$ or $z$, so we have only three cases: $v' = x$ (so $Av' = v'$, $Ay = v' + y$), $v' = y$ (so $Av' = v' + x$), or $v' = z$ (so $Av' = v'$ and $Ay = x + y$).
        \item If the 2-dimensional Jordan block has generalized eigenvalue 0, then $A$ is of the form $\begin{pmatrix} 0 & 1 & 0 \\ 0 & 0 & 0 \\ 0 & 0 & \lambda \end{pmatrix}$. If $\lambda \ne 0$, we can scale $x, y$, and $A$ so that $\lambda = 1$; otherwise $\lambda  = 0$. Suppose $v' = \alpha x + \beta y + \gamma z$.
        \begin{enumerate}
            \item If $\lambda = 1$, then as above, $x, y$ can be scaled together, $y$ can be translated by multiples of $x$, and $z$ can be scaled. Hence, we have four cases again. If $\gamma = 1$ and $\beta \ne 0$, then with basis $x, y, v'$, we have $Ax = 0$, $Ay = x$, and $Av' = v' + x + y$ (row 16); if $\beta = 0$, then with basis $x, y, v'$, we have $Ax = 0$, $Ay = x$, and $Av' = v' + x$ (row 17). If $\gamma = 0$ and $\beta \ne 0$, then with basis $x, v', z$, we have $Ax = 0$, $Av' = x$, and $Az = z$ (after relabeling $z$ to $y$, row 18); if $\beta = 0$, then with basis $v', y, z$, we have $Av' = 0$, $Ay = v'$, and $Az = z$ (after relabeling $y, z$ to $x, y$ respectively, row 19).
            \item If $\lambda = 0$, then $x, y$ can be scaled together, $y$ can be translated by multiples of $x$ or $z$, and $z$ can be scaled or translated by $x$. Hence, we have three cases: if $\beta \ne 0$, with basis $x, v', z$ we have $Ax = Az = 0$ and $Av' = x$ (row 20); if $\beta = 0$ but $\gamma \ne 0$, then with basis $x, y, v'$ we have $Ax = 0$, $Ay = x$, and $Av' = 0$ (row 21); if $\beta = \gamma = 0$, then with basis $v', y, z$ we have $Av' = 0$, $Ay = v'$, and $Az = 0$ (row 22).
        \end{enumerate}
    \end{enumerate}
\end{enumerate}
\end{proof}
\begin{proposition}
If $f = 0$, $L$ is abelian, and $b \ne 0$, then the isomorphism classes of such Lie algebra structures are: \begin{center}
    \begin{tabular}{| c | c | c | c | c | c |}
        & $[v, v]$ & $[v, v']$ & $[v, x]$ & $[v, y]$ & JNF of $A$ \\
        \hline
        4.1 & $x$ & $0$ & $0$ & $\mu y$ & $0$ \\ \hline
        4.2 & $v'$ & $0$ & $\lambda x$ & $\mu y$ & \multirow{4}{*}{$\diag(0, \lambda, \mu)$} \\ \cline{1-5}
        4.3 & $x$ & $0$ & $0$ & $y$ & \\ \cline{1-5}
        4.4 & $x$ & $v' + \alpha x + y$ & $0$ & $ \lambda y$ & \\ \cline{1-5}
        4.5 & $x$ & $v' + \alpha x$ & $0$ & $\lambda y$ & \\ \hline
        4.6 & $v'$ & $0$ & $v'$ & $x$ & \multirow{3}{*}{$\begin{pmatrix} 0 & 1 & 0 \\ 0 & 0 & 1 \\ 0 & 0 & 0 \end{pmatrix}$} \\ \cline{1-5}
        4.7 & $x$ & $\alpha x + y$ & $0$ & $\lambda x$ & \\ \cline{1-5}
        4.8 & $x$ & $\lambda x $ & $0$ & $v'$ & \\ \hline
        4.9 & $x$ & $y$ & $0$ & $0$ & \multirow{6}{*}{$\begin{pmatrix} 0 & 1 & 0 \\ 0 & 0 & 0 \\ 0 & 0 & 0 \end{pmatrix}$} \\ \cline{1-5}
        4.10 & $x$ & $0$ & $0$ & $v'$ & \\ \cline{1-5}
        4.11 & $v'$ & $0$ & $0$ & $x$ & \\ \cline{1-5}
        4.12 & $v'$ & $0$ & $v'$ & $0$ & \\ \cline{1-5}
        4.13 & $x$ & $x$ & $0$ & $0$ & \\ \cline{1-5}
        4.14 & $x$ & $0$ & $0$ & $x$ & \\ \hline
        4.15 & $v'$ & $0$ & $\lambda x$ & $\lambda (x + y)$ & \multirow{3}{*}{$\begin{pmatrix} 1 & 1 & 0 \\ 0 & 1 & 0 \\ 0 & 0 & 0 \end{pmatrix}$} \\ \cline{1-5}
        4.16 & $x$ & $v' + \lambda x + y$ & $0$ & $y$ & \\ \cline{1-5} 
        4.17 & $x$ & $v' + \lambda x$ & $0$ & $v' + y$ & \\ \hline
        4.18 & $v'$ & $0$ & $x$ & $\lambda y$ & \multirow{3}{*}{$\begin{pmatrix} 0 & 1 & 0 \\ 0 & 0 & 0 \\ 0 & 0 & 1 \end{pmatrix}$} \\ \cline{1-5}
        4.19 & $x$ & $v' + \lambda (x + y)$ & $0$ & $x$ & \\ \cline{1-5} 
        4.20 & $x$ & $\lambda x$ & $0$ & $y$ & \\ \hline
\end{tabular}
\end{center}
\end{proposition}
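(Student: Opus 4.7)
The plan is to specialize the classification framework of Theorem \ref{lie_m_np_classification} to $m=2$, $n=1$ with $f=0$, $L$ abelian, and $b \ne 0$, and then enumerate canonical forms by bringing $A$ to Jordan normal form. Under these hypotheses nearly all of the structural data collapses: $B$ is forced to vanish (since $V$ is one-dimensional and $B(v \otimes v) = 0$), the condition that $A$ is a derivation of $L$ is vacuous because $L$ is abelian, and the Jacobi-type identities reduce to the single constraint $Ab=0$. Hence an isomorphism class is described by a tuple $(A, b, c)$ with $A \in \End(L)$, $b, c \in L \setminus \{0\}$, and $Ab = 0$, modulo the combined action of $\GL(L)$ and the scaling $v \mapsto tv$ which sends $(A, b, c) \mapsto (tA, t^2 b, tc)$.

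First I normalize $b$: since $b \ne 0$, an element of $\GL(L)$ reduces to $b = x$ in some basis $x, y, z$ of $L$, and then the constraint $Ab=0$ becomes $Ax = 0$, so $x \in \ker A$. The stabilizer of $b=x$ in $\GL(L)$ consists of block-lower-triangular transformations preserving the line $kx$; together with the scaling $v \mapsto tv$ this is the group whose orbits on the pair $(A, c)$ I must enumerate. I proceed by casework on the Jordan normal form of $A$, always subject to $Ax = 0$: the zero matrix, the diagonal form $\diag(0, \lambda, \mu)$, a single nilpotent $3\times 3$ Jordan block, a nilpotent $2\times 2$ block plus a $1\times 1$ block (with eigenvalue $0$ or, after rescaling $A$ via $v \mapsto tv$, eigenvalue $1$), and an eigenvalue-$1$ Jordan block of size $2$ plus a $1\times 1$ block. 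For each JNF, the remaining symmetry is a small group (scalings plus a few unipotent moves preserving the block structure) under which I classify the $\GL$-orbit of $c = v'$, reading off the brackets $[v, y] = Ay$, $[v, z] = Az$, $[v, v'] = Av'$ in the adapted basis.

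The cases correspond to the horizontal blocks of the table: row $4.1$ corresponds to $A = 0$ (with $\mu$ recording that $y, z$ can be rescaled independently); rows $4.2$--$4.5$ to $A = \diag(0, \lambda, \mu)$, distinguished by whether $v'$ is an eigenvector or has nonzero components in multiple eigenspaces; rows $4.6$--$4.8$ to the size-$3$ nilpotent block, distinguished by the position of $v'$ in the Jordan chain; rows $4.9$--$4.14$ to the nilpotent $2+1$ case; rows $4.15$--$4.17$ to the $\begin{pmatrix} 1 & 1 \\ 0 & 1 \end{pmatrix} \oplus 0$ form; and rows $4.18$--$4.20$ to the $J_2(0) \oplus (1)$ form. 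In each subcase I use up the remaining freedom in $\GL(L)$ (shifts within a Jordan chain, scalings of eigenlines, and the $v \mapsto tv$ rescaling) to fix representatives for $v'$ and, when possible, to eliminate free parameters, leaving only the genuine moduli $\lambda$, $\mu$, $\alpha$ that appear in the table.

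The main obstacle is the bookkeeping: in several JNF strata the stabilizer is small enough that multiple inequivalent positions of $v'$ give rise to distinct classes, and I must verify both exhaustiveness and nonredundancy. Exhaustiveness follows from completeness of the JNF classification and the standard orbit analysis on each stratum; nonredundancy is checked by invariants preserved by isomorphism, namely the Jordan type of $A$, the dimensions of $\ker A$ and $\im A$, the class of $v' \bmod \im A$, whether $v' \in \ker A$, and the relation between $v'$ and $b$ modulo $\im A$. These suffice to separate the rows of the table, completing the classification.
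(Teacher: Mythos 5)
Your proposal is correct and follows essentially the same route as the paper: specialize the framework to the data $(A,b,c)$ with $A\in\End(L)$, $Ab=0$, modulo $\GL(L)$ and the rescaling $(A,b,c)\mapsto(tA,t^2b,tc)$, then enumerate by the Jordan form of $A$ and the position of $v'$ relative to the Jordan basis and to $b$ (the paper merely orders the normalizations differently, bringing $A$ to Jordan form first and then splitting on whether $b$ is proportional to $v'$). The one point to watch is that after normalizing $b=v'$ the scaling $v\mapsto tv$ is used up, so the nonzero eigenvalue of $A$ can no longer be rescaled to $1$ --- this is exactly why rows 4.2, 4.15 and 4.18 retain eigenvalue parameters, so your blanket statement that the non-nilpotent Jordan strata can be taken with eigenvalue $1$ needs the caveat ``only when $b$ and $v'$ are independent.''
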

\begin{proof}
If $b \ne 0$, then $A$ has nontrivial kernel. 
\begin{enumerate}
    \item If $A = 0$, then either $b$ is a multiple of $v'$, so $[v, v] = v'$ (row 2 when $\lambda = \mu = 0$), or $b$ is not a multiple of $v'$, so $[v, v] = x$ (row 1).
    \item If $A$ is diagonalizable and nonzero, we casework on whether $b$ is a multiple of $v'$.
    \begin{enumerate}
        \item If $b$ is a multiple of $v'$, we can scale so that $b = v'$, but we can no longer scale $A$ so that one of the nonzero eigenvalues is $1$. So we have $Av' = 0$, $Ax = \lambda x$, and $Ay = \mu y$ (row 2).
        \item If $b$ is not a multiple of $v'$, then we can scale $A$ arbitrarily since $Ab = 0$ is not affected on scaling. Hence, WLOG we have that $A = \diag(0, 1, \lambda)$. If $Av' = 0$, then $\lambda = 0$ and we have $Av' = 0$, $Ax = 0$, and $Ay = y$ (row 3). If $Av' \ne 0$, then using similar logic as in the $b = 0$ case above, either $Ax = 0$, $Av' = v' + \alpha x + y$, and $Ay = \lambda y$ (row 4) or $Ax = 0$, $Av' = v' + \alpha x$, and $Ay = \lambda y$ (row 5). Note that if $\lambda = 1$, we only have the $Av' = v' + \alpha x$ case.
    \item If $A$ is nilpotent, we casework on its Jordan normal form.
    \begin{enumerate}
        \item If $A$ has one Jordan block, it is of the form $\begin{pmatrix} 0 & 1 & 0 \\ 0 & 0 & 1 \\ 0 & 0 & 0 \end{pmatrix}$ for basis $x, y, z$, and $b$ is a multiple of $x$. If $v'$ is also a multiple of $x$, then we can scale so that $[v, v] = v'$ and we have $Ay = v'$, $Az = y$ (after relabeling $y,z$ to $x, y$ respectively, row 6). Otherwise, we can scale so that $b = x$. Then the situation is the same as the single Jordan block/nilpotent case above, except that we can no longer scale $x$ and exclude the case where $v'$ is a multiple of $x$.
        Hence, either $Av' = \alpha x + y$, $Ax = 0$, $Ay = \lambda x$ (row 7) or $Av' = \lambda x$, $Ax = 0$, $Ay = v'$ (row 8).
        \item If $A$ has two Jordan blocks, it is of the form $\begin{pmatrix} 0 & 1 & 0 \\ 0 & 0 & 0 \\ 0 & 0 & 0 \end{pmatrix}$.
        \begin{enumerate}
            \item Suppose $b$ is in the 1-dimensional Jordan block. If $v'$ is not a multiple of $b$, the cases are the same as in the $b = 0$ case. Labeling $b = x$, either $Av' = y$, $Ay = 0$, and $Ax = 0$ (row 9) or $Av' = 0$, $Ay = v'$, and $Ax = 0$ (row 10). If $b$ is a multiple of $v'$, we can scale so that $b = v'$, then scale the other basis vectors so that $Ax = 0$, $Ay = x$, and $Av' = 0$ (row 11).
            \item Suppose $b$ is in the 2-dimensional Jordan block. If $b$ is a multiple of $v'$, then we can scale $A$ so that $b = v'$, in which case $Av' = 0$ and we can scale the other basis vectors so that $Ax = v'$, $Ay = 0$ (row 12).  If $b$ is not a multiple of $v'$, the cases are the same as in the $b = 0$ case -- either $Av' = x$, $Ay = 0$, $Ax = 0$ (row 13) or $Av' = 0$, $Ay = x$, $Ax = 0$ (row 14).
        \end{enumerate}
    \end{enumerate}
    \end{enumerate}
    \item If $A$ is not diagonalizable and not nilpotent, it still has nonzero kernel, so it either has a 1-dimensional or 2-dimensional Jordan block with generalized eigenvalue $0$.
    \begin{enumerate}
        \item In the former case, we can scale $A$ and change basis so that it is of the form $\begin{pmatrix} 1 & 1 & 0 \\ 0 & 1 & 0 \\ 0 & 0 & 0 \end{pmatrix}$ with basis $x, y, b$. If $b$ is a multiple of $v'$, then we can scale $A$ so $b = v'$, in which case we have $Ax = \lambda x$ and $Ay = \lambda (x + y)$ (row 15). Otherwise, scale $x$ so that $b = x$; the situation is similar to the case of two Jordan blocks where the 2-dimensional block has nonzero generalized eigenvalue, except that we can no longer scale $x$, so either $Av' = v' + y + \lambda x$, $Ax = 0$, and $Ay = y$ (row 16) or $Av' = v' + \lambda x$, $Ax = 0$, and $Ay = y + v'$ (row 17).
        \item Otherwise, we can scale $A$ to have nonzero eigenvalue $1$, and a change of basis has that $A$ is of the form $\begin{pmatrix} 0 & 1 & 0 \\ 0 & 0 & 0 \\ 0 & 0 & 1 \end{pmatrix}$ with basis $x, y, z$. We know $b$ is a multiple of $x$. If $b$ is a multiple of $v'$, we can scale $A$ so that $b = v'$; in that case, we have $Ay = x$ but $Az = \lambda z$ (after relabeling $z$ to $y$, row 18). Otherwise, we can scale $x$ so that $b = x$. Again, the situation is the same as in the $b = 0$ case except that we can no longer scale $x$ or $y$, but we can still translate $y$ by multiples of $x$. Hence, we have either $Av' = v' + \lambda (x + y)$, $Ax = 0$, $Ay = x$ (row 19) or $Av' = \lambda x$, $Az = z$, $Ax = 0$ (after relabeling $z$ to $y$, row 20).
    \end{enumerate}
\end{enumerate}
\end{proof}

\begin{proposition}
    If $f \ne 0$, then the isomorphism classes of such Lie algebras are:
\begin{center}
    \begin{tabular}{| c | c | c | c | c | c |}
        & $[v, v]$ & $[v, v']$ & $[v, x]$ & $[v, y]$ & $[x, y]$ \\
        \hline
        5.1 & $y$ & $0$ & $v + y$ & $0$ & $0$ \\ \hline
        5.2 & $y$ & $0$ & $v + v' + \lambda y$ & $0$ & $0$ \\ \hline
        5.3 & $y$ & $\lambda y + x$ & $v + \mu v' + \nu y$ & $v'$ & $0$ \\ \hline
        5.4 & $0$ & $0$ & $v + v' + \mu y$ & $0$ & $\lambda y$ \\ \hline
        5.5 & $0$ & $0$ & $v + y$ & $0$ & $\lambda y$ \\ \hline
        5.6 & $0$ & $0$ & $v + \mu v' + \nu y$ & $v'$ & $0$ \\ \hline
        5.7 & $0$ & $y$ & $v + \mu v' + \nu y$ & $\lambda v'$ & $0$ \\ \hline
        5.8 & $0$ & $0$ & $v + v' + \lambda y$ & $0$ & $y + v'$ \\ \hline
        5.9 & $0$ & $0$ & $v + y$ & $0$ & $y + v'$ \\ \hline
    \end{tabular}
\end{center}
\end{proposition}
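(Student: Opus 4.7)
The plan is to apply the special case of the preceding proposition ($m\cdot\1 + P$ with $f \ne 0$) to the situation $m = 2$. Here $L = \ker d$ is three-dimensional and $L_0 := \ker f \subset L$ has codimension one. Since $L_0$ must contain the nonzero element $c$ (the image of $d$), and any element of $L_0$ in the image of $d$ lies in $Z(L_0)$, the two-dimensional Lie algebra $L_0$ has nontrivial center and hence must be abelian, since the nonabelian 2-dimensional algebra $L_2$ from Proposition~4.4 has trivial center. This lets us pass to the simpler ``$L_0$ abelian'' data recorded in the remark: a pair $(D, A_0)$ of endomorphisms of $L_0$, a character $a: L_0 \to k$, central elements $b, c \in L_0$ with $c \ne 0$, and an element $q \in L_0$, subject to $Dc = c$, $Db = 0$, $a \circ D = a$, $a \wedge D = 0$, $(D-1)A_0 = 0$, and $A_0 b = a(c)c$, with the scaling freedom $(A, b, c) \mapsto (tA, t^2 b, tc)$.

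Next I would normalize: rescale $v$ so that $c = v'$, and choose the splitting so $f(x) = 1$, $f(y) = f(v') = 0$; then $y, v'$ is a basis for $L_0$. The constraint $Dc = c$ fixes the $v'$-eigenvalue of $D$ at $1$, and $D \ne 0$ follows automatically. I would then case-split on the Jordan type of $D$ on $L_0$ (diagonal with a second eigenvalue $0$, diagonal with a second nonzero eigenvalue, or a single $2\times 2$ Jordan block with eigenvalue $1$). Within each case, enumerate the characters $a$ satisfying $aD = a$, enumerate endomorphisms $A_0$ satisfying $(D-1)A_0 = 0$ modulo $\ad L_0 = 0$, enumerate central $b$ with $A_0 b = a(c)c$, and record $q$. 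Finally recover the brackets from the formula $[v, z] = f(z)v + \mathbf{A}(z)(v)$ and $[v, v] = b$, using the chosen representatives $\mathbf{D} = \ad p$ and $\mathbf{A}_0$.

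The main obstacle is the bookkeeping to kill residual gauge freedom and verify non-isomorphism of the listed cases. The relevant symmetries are: scaling $v$ (with its corresponding effect $t^2$ on $b$, $t$ on $c$), shifting $x$ by an element of $L_0$ (which conjugates $\mathbf{D}$ and shifts $\mathbf{A}_0$ by $a \otimes z - a(z)\mathbf{D}$), and the linear changes of basis on $L_0$ preserving the line $k v' = \im d$ (i.e.\ scaling $y$, shifting $y$ by $v'$, and scaling $v'$ coupled to $v$). In each JNF class of $D$, these reduce the free parameters to those shown in rows 5.1--5.9; I expect rows 5.1--5.3 to arise from $b \ne 0$ (hence $[v,v] = y$ after reducing the independent direction of $L_0$ to $y$), and rows 5.4--5.9 from $b = 0$, with the subdivisions determined by whether $a$ vanishes on $y$, whether $Dy = 0$, and whether a nontrivial Jordan block mixes $y$ and $v'$. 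A secondary subtlety is confirming that the cohomological classes $D \in H^1(L_0, L_0)$ and $A_0 \in \End(L_0)/\ad(L_0)$ coincide with honest endomorphisms here (because $L_0$ is abelian, $\ad(L_0) = 0$, so there is no quotienting), which is what makes the explicit case analysis tractable.
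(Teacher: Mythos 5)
Your proposal is correct and follows essentially the same route as the paper: it reduces to the abelian-$L_0$ data $(D, A_0, a, b, c, q)$ of the preceding proposition (observing that the two-dimensional $L_0 = \ker f$ contains the central element $c = v'$ and hence is abelian), and then enumerates cases with the same gauge freedoms (scaling $v$, shifting the splitting, and basis changes of $L_0$ preserving $k v'$), recovering the brackets from $[v,z] = f(z)v + \mathbf{A}(z)v$ and $[v,v] = b$. The only difference is organizational: the paper splits first on $b \ne 0$ versus $b = 0$ (with diagonalizability of $D$ as the secondary split), while you split first on the Jordan form of $D$, which is the same computation in a different order.
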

\begin{proof}

If $f \ne 0$, then $L_0$ is a 2-dimensional Lie algebra with nontrivial center, so it must be abelian; say $L_0$ has basis $c, y$.

If $b \ne 0$, then $D$ is determined by $Dc = c$ and $Db = 0$. We have no conditions on $a$ other than $a(b) = 0$. $A_0: L_0 \to L_0$ satisfies $DA_0 + A_0 D = A_0$ and $A_0b = a(c)c$, so $A_0c$ is a multiple of $b$. Then
\begin{enumerate}
    \item If $a = 0$ and $A_0c = 0$, then we may scale $A$ so that either $q = b$ or $q = c + \lambda b$. Hence, we have $[x, v'] = v'$, $[x, y] = 0$, $[v, v] = y$,  $[v, v'] = 0$, $[v, y] = 0$, and $[v, x] = v' + \lambda y + v$ (row 2) or $v + y$ (row 1).
    \item If $a \ne 0$, then we can scale so that $a(c) = 1$. Hence $A_0 b = c$, $A_0 c = \lambda b$, and $q$ is unrestricted. We have the brackets $[x, v'] = v'$, $[x, y] = 0$, $[v, v] = y$, $[v, x] = \mu v' + \nu y + v$, $[v, y] = v'$, $[v, v'] = \lambda y + x$ (row 3).
\end{enumerate}

If $b = 0$, we always have $0 = A_0 b = a(c) c$, so $a(c) = 0$. Since $a \wedge D = 0$ and $Dc \ne 0$, we require $a(y) = 0$, so in fact $a = 0$. Either $D$ is diagonalizable or not.
\begin{enumerate}
    \item $D$ is diagonalizable, so $Dc = c$ and $Dy = \lambda y$. That $[D, A_0] = A_0$ implies that $A_0c = \alpha y$, $A_0 y = \beta c$, and $(\lambda + 1)A_0 = A_0$. So either $\lambda = 0$, or $A_0 = 0$. 
    \begin{enumerate}
        \item If $A_0 = 0$, we may scale $A$ so that either $q = b$ or $q = c + \mu b$. Meanwhile, $\lambda$ can be any constant. In this case, the brackets are $[x, v'] = v'$, $[x, y] = \lambda y$, $[v, v'] = 0$, $[v, y] = 0$, $[v, x] = v + v' + \mu y$ (row 4) or $[v, x] = v + y$ (row 5).
        \item If $A_0 \ne 0$, then $\lambda = 0$. Then we have two subcases based on whether $\alpha = 0$ or not:
        \begin{enumerate}
            \item If $\alpha = 0$, we may scale $A$ so that $\beta = 1$. Hence the brackets are $[x, v'] = v'$, $[x, y] = 0$, $[v, x] = v + \mu v' + \nu y$, $[v, v'] = 0$, $[v, y] = v'$ (row 6).
            \item If $\alpha \ne 0$, we may scale $A$ so that $\alpha = 1$, while $\beta$ can be any constant. Hence the brackets here are $[x, v'] =  v'$, $[x, y] = 0$, $[v, x] = v + \mu v' + \nu y$, $[v, v'] = y$, $[v, y] = \beta v'$ (row 7).
        \end{enumerate}
    \end{enumerate}
    \item $D$ is not diagonalizable, so $Dc = c$ and $Dy = c + y$. Then we can check that $[D, A_0] = A_0$ implies that $A_0 = 0$. So in this case $[x, v'] = v'$, $[x, y] = y + v'$, $[v, v'] = 0$, $[v, y] = 0$, and$[v, x] = v + v' + \lambda y$ (row 8) or $[v, x] = v + y$ (row 9).
\end{enumerate}
\end{proof}

\subsection{Classification of Lie algebra structures on $2P$}

In this section, we classify Lie algebra structures on $\mfrak{g} = 2P$. Let $L = \ker d$ and choose a splitting $\mfrak{g} = V \oplus L$ as vector spaces as above. We must pick $L$, an $L$-module structure on $V$, a derivation $A: L \to \Hom(V, L)$, a element $B \in V$ (corresponding to $B(x \otimes y$), and elements $C(x \otimes y), C(x \otimes x), C(y \otimes y) \in L$ satisfying the above conditions.

If $L$ is abelian, then $f = 0$ and we must choose a 2-dimensional Lie algebra $\mfrak{g}_0$, a 2-dimensional representation $L$ of $\mfrak{g}_0$, an injective 1-cocycle $d: \mfrak{g}_0 \to L$, and a symmetric map $\mathbf{C}: S^2(\mfrak{g}_0) \to L$ up to coboundaries such that $p(C) \in L^{\mfrak{g}_0}$, where $C \in H^2_e(\mfrak{g}_0, L)$ is the extended cohomology class of $[\mathbf{C}]$). Note that $\mathbf{C}$ is automatically a 2-cocycle. Let $\mathbf{C}_{zw} = C(z \otimes w)$ for $z, w \in \mfrak{g}_0$.

\begin{proposition}
    If $L$ and $\mfrak{g}_0$ are abelian, we have $x', y'$ central and the following cases for the other brackets:
    \begin{center}
        \begin{tabular}{| c | c | c | c |}
        & $[x, x]$ & $[y, y]$ & $[x, y]$ \\ \hline
        1 & $0$ & $0$ & $0$ \\ \hline
        2 & $0$ & $0$ & $x'$ \\ \hline
        3 & $y'$ & $0$ & $\lambda x', \lambda \ne 0$ \\ \hline
        4 & $y'$ & $0$ & $y'$ \\ \hline
        5 & $x' + \lambda y'$ & $0$ & $x'$ \\ \hline
        6 & $x'$ & $0$ & $\lambda y'$ \\ \hline
        7 & $y'$ & $\lambda x', \lambda \ne 0$ & $\mu x'$\\ \hline
        8 & $x'$ & $y'$ & $\lambda x' + \mu y'$ \\ \hline
        \end{tabular}
    \end{center}
\end{proposition}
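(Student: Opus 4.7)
The plan is to exploit the observation that once $x', y'$ are central (equivalently, the $\mfrak{g}_0$-action $\rho: \mfrak{g}_0 \to \End(L)$ vanishes), all the cocycle, invariance, and coboundary conditions in the simplified abelian framework collapse: the cocycle equation $\rho(v_3) C(v_1, v_2) + \text{cyclic} = 0$ becomes automatic, $L^{\mfrak{g}_0} = L$ so $p(C) \in L^{\mfrak{g}_0}$ holds trivially, and 2-coboundaries vanish since they involve $\rho$. The data then reduces to a single symmetric bilinear map $C: S^2 V \to L$, encoded by the triple $(C_{xx}, C_{xy}, C_{yy}) \in L^3 \cong k^6$, and isomorphism classes become orbits of $\GL_2(k) = \Aut_{\Ver_4^+}(2P)$ acting diagonally on $V$ and $L$ through the $\Ver_4^+$-isomorphism $d$.

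Under the change of basis $\tilde x = ax + by$, $\tilde y = cx + dy$, the characteristic-2 bilinear formulas give
\begin{equation*}
[\tilde x, \tilde x] = a^2 C_{xx} + b^2 C_{yy}, \quad [\tilde y, \tilde y] = c^2 C_{xx} + d^2 C_{yy}, \quad [\tilde x, \tilde y] = ac\, C_{xx} + (ad + bc)\, C_{xy} + bd\, C_{yy},
\end{equation*}
together with the dual expansion of the $L$-components into $(\tilde x', \tilde y')$. These transformation rules drive the entire enumeration.

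I would stratify the orbits by the rank of the additive Frobenius-semilinear squaring map $Q: V \to L$, $Q(v) := [v, v]$, which in characteristic 2 satisfies $Q(v + w) = Q(v) + Q(w)$ and $Q(\alpha v) = \alpha^2 Q(v)$. When $\on{rank}(Q) = 0$, only $C_{xy}$ remains and normalizes to $0$ or $x'$, yielding rows 1 and 2. When $\on{rank}(Q) = 1$, I normalize $\ker Q = \text{span}(y)$ and then bifurcate on whether $\im Q = d(\ker Q) = \text{span}(y')$: the coincident case produces rows 3 and 4 after setting $Q(x) = y'$ and using the residual diagonal stabilizer on $C_{xy}$, while the transverse case gives rows 5 and 6 after setting $Q(x) = x' + cy'$ and exhausting the residual upper-triangular freedom. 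When $\on{rank}(Q) = 2$, $Q$ is an invertible Frobenius-semilinear operator on $V \cong L$; over algebraically closed $k$ of characteristic 2 it has two $\GL_2$-conjugacy classes, producing rows 7 and 8 after fixing $C_{xy}$ via the residual centralizer.

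The main obstacle is the careful bookkeeping of residual stabilizers in each case, since the $\alpha^2$-homogeneity of $Q$ (rather than linear) means rescalings absorb fewer parameters than in characteristic zero; this asymmetry is exactly what produces the continuous moduli $\lambda, \mu$ in rows 3, 6, 7, 8. I would confirm pairwise inequivalence across rows via discrete invariants ($\on{rank}(Q)$, the position of $\im Q$ relative to $d(\ker Q)$, and the class of $C_{xy}$ in $L/\im Q$) and verify that the surviving parameters cannot be further absorbed by any residual stabilizer.
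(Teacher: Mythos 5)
Your stratification by the squaring map $Q(v)=[v,v]$ is really the paper's own case division in slightly more invariant language: $\operatorname{rank}Q=0,1,2$ corresponds exactly to $[x,x],[y,y]$ being both zero, linearly dependent, or independent, and your rank-$0$ and rank-$1$ steps (normalize $\ker Q$, then split on whether $\im Q=d(\ker Q)$) reproduce the paper's subcases; like the paper, you take the triviality of the $\mathfrak{g}_0$-action on $L$ (centrality of $x',y'$) as given rather than deriving it, so that is not a point of difference.

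The rank-$2$ step, however, rests on a false claim. Over an algebraically closed field $k$ of characteristic $2$, an invertible Frobenius-semilinear operator on a two-dimensional space has exactly \emph{one} orbit under the relevant twisted conjugation, not two: writing $Q=A\circ F$ with $F$ the coordinatewise square, the solutions of $[v,v]=v'$, i.e.\ of $v^{(2)}=A^{-1}v$, form an \'etale degree-$4$ scheme whose four points are an $\mathbb{F}_2$-subspace of $V$ spanning $V$ over $k$ (the usual minimal-relation argument), so one can always choose a basis with $[x,x]=x'$ and $[y,y]=y'$. This is the linear-algebra form of the Lang--Steinberg argument and does not require $k=\overline{\mathbb{F}}_2$. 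Consequently your route cannot produce row 7 as a second conjugacy type, and the pairwise inequivalence you propose to verify fails exactly there: rows 7 and 8 overlap as isomorphism classes. For instance, the row-7 algebra with $[x,x]=y'$, $[y,y]=x'$, $[x,y]=0$ becomes $[u,u]=u'$, $[w,w]=w'$, $[u,w]=u'+w'$ (row 8 with $\lambda=\mu=1$) in the basis $u=x+y$, $w=\omega x+\omega^2 y$, where $\omega$ is a primitive cube root of unity. The paper makes neither claim: in the independent case it merely normalizes by hand according to whether $[y,y]$ lies in $\operatorname{span}(y')$, and it never asserts that the listed rows are mutually non-isomorphic, only that every structure has one of the listed forms. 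Your argument is repairable---dropping the two-class claim, every rank-$2$ structure reduces to the row-8 shape with $[x,y]$ determined up to the residual stabilizer $\GL_2(\mathbb{F}_2)$, which still shows the table is exhaustive---but as written the rank-$2$ classification and the promised inequivalence check are incorrect.
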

\begin{proof}
    If $\mfrak{g}_0$ is abelian, then $d$ is any injective map and $C$ is any symmetric 2-form on $\mfrak{g}_0$ taking values in $L$, so we need to choose elements $C_{xy}, C_{xx}, C_{yy} \in L$. Then $[x, y] = C_{xy}$, $[x, x] = C_{xx}$, $[y, y] = C_{yy}$, and all other brackets are zero. Therefore, we have the following cases:
    \begin{enumerate}
        \item If $[x, x] = [y, y] = 0$, then either $[x, y] = 0$ and $\mfrak{g}$ is abelian (row 1) or $[x, y] = v'$ where WLOG $v$ is linearly independent of $y$. Then with basis $v, y$, we have $[v, y] = v'$. So we can say $[x, y] = x'$ (row 2).
        \item If $[x, x]$ and $[y, y]$ are linearly dependent but not both zero, WLOG say $[x, x] \ne 0$ and $[y, y] = \gamma^2 [x, x]$. Then $[y + \gamma x, y + \gamma x] = 0$. So we can say $[y, y] = 0$. Now either $[x, x]$ is a multiple of $y'$, so by scaling $y$ we have $[x, x] = y'$ , or $[x, x] = v'$, $v$ linearly independent of $y$, and so $[v, v] = \lambda v'$ for some $\lambda \ne 0$, and by rescaling $v$ we get $[v, v] = v'$ .
        \begin{enumerate}
            \item Suppose $[x, x] = y'$. Then if $[x, y] = v'$, $v$ linearly independent of $y$, we have $[v, y] = \lambda v'$ for some $\lambda \ne 0$ and $[v, v] = \lambda^2 y'$. By rescaling $v$, we can have $[v, v] = y'$, but we now cannot scale $y$, so $[v, y] = \lambda v'$ still (row 3). If instead $[x, y] = \lambda y'$ instead, then we can scale $x \mapsto \lambda^{-1}x$ and $y \mapsto \lambda^{-2}y'$ to get $[x, y] = y'$ (row 4).
            \item Suppose $[x, x] = x'$. Then if $[x, y] = v'$, $v$ linearly independent of $y$, we have $[v, y] = \lambda v'$ and $[v, v] = \lambda v' + \mu y'$ for $\lambda \ne 0, \mu \in k$. By scaling $v \mapsto \lambda^{-2}v$ and $y \mapsto \lambda^{-1}y$, we can get $[v, v] = v' + \mu y'$ and $[v, y] = v'$ (row 5). If instead $[x, y] = \lambda y'$, we can't do any more scaling (row 6).
        \end{enumerate}
        \item If $[x, x]$ and $[y, y]$ are linearly independent, suppose first that $[y, y]$ is linearly independent of $y'$. WLOG we can set $[x, x] = \lambda y'$ and $[y, y] = x' + \mu y'$. By scaling $x$ and translating $y$, we can get $[x, x] = y'$ and $[y, y] = \nu x'$ for $\nu \ne 0$. Now if $[x, y] = \alpha x' + \beta y'$, we cannot scale $x$ or $y$ anymore, but by picking $\mu$ such that $\mu(\alpha + \mu \lambda) = \beta$, we have $[x + \mu y, y] = (\alpha + \mu \lambda)(x' + \mu y')$, so we can get $[x, y'] = \alpha x'$ (row 7).
        
        If instead $[y, y]$ is a multiple of $y'$, we know $[x, x]$ is linearly independent of $y'$. Hence, we can scale $x, y$ so $[y, y] = y'$ and $[x, x] = x' + \mu y'$. Notice that $$[x + \lambda y, x + \lambda y] = x' + \mu y' + \lambda^2 y',$$ so by choosing $\lambda = \mu + \lambda^2$ and changing $x \mapsto x + \lambda y$, we get $[y, y] = y'$ and $[x, x] = x'$. Now $[x, y]$ can be anything (row 8).
    \end{enumerate}
\end{proof}

\begin{proposition}
    If $L$ is abelian and $\mfrak{g}_0$ is not abelian, we have the following cases (we list the $x$- and $y$-actions on $L$ as matrices in a basis $v, w$, and unless otherwise specified $x' = v$, $y' = w$.
    \begin{center}
    \begin{tabular}{| c | c | c | c | c | c | c | }
    & $[x, x]$ & $[y, y]$ & $[x, y]$ & $\rho(x)$ & $\rho(y)$ & add'l conditions \\
        \hline
        1 & $0$ & $0$ & $x$ & $\begin{pmatrix} 0 & 1 \\ 1 & 0 \end{pmatrix}$ & $\begin{pmatrix} \alpha & 0 \\ 0 & \alpha + 1 \end{pmatrix}$ & \makecell{ $x' = \lambda v + \mu w$ \\ $y' = \alpha \mu v + (\alpha + 1)\lambda w$} \\ \hline
     % 1 & $0$ & $0$ & $x$ & \multirow{2}{*}[-0.5em]{$\begin{pmatrix} 1 & 1 \\ 0 & 1 \end{pmatrix}$} & \multirow{2}{*}[-0.5em]{$\begin{pmatrix} 0 & b \\ 1 & 1 \end{pmatrix}$} & \makecell{ $x' = v$ \\ $y' = w$} \\ \cline{1-1} \cline{7-7}
        % 2 & $0$ & $0$ & $x$ &  &  & \makecell{$x' = cv + w$ \\ $y' = bv + cw$} \\ \hline
        2 & $0$ & $0$ & $x$ & $\begin{pmatrix} 0 & 0 \\ \alpha + 1 & 0 \end{pmatrix}$ & $\begin{pmatrix} \alpha & \lambda \\ 0 & \alpha + 1 \end{pmatrix}$ & $\alpha \ne 0$ \\ \hline
        3 & $\omega v$ & $\xi v$ & $x + \nu w$ & $\begin{pmatrix} 0 & 1 \\ 0 & 0 \end{pmatrix}$ & $\begin{pmatrix} 0 & 0 \\ 0 & 1 \end{pmatrix}$ & \makecell{ $x' = av + bw$ \\ $y' = cv + aw$ \\ $a^2 \ne bc$} \\ \hline
        % 4 & \multirow{2}{*}[-0.5em]{$\omega v$} & \multirow{2}{*}[-0.5em]{$\xi v$} & \multirow{2}{*}[-0.5em]{$x + \nu w$} & \multirow{2}{*}[-0.5em]{$\begin{pmatrix} 0 & 1 \\ 0 & 0 \end{pmatrix}$} & \multirow{2}{*}[-0.5em]{$\begin{pmatrix} 0 & 0 \\ 0 & 1 \end{pmatrix}$} & \makecell{$x' = w$ \\ $y' = cv$} \\ \cline{1-1} \cline{7-7}
        % 5 & & & & & & \makecell{$x' = v + bw$ \\ $y' = cv + w$} \\ \hline
        4 & $0$ & $0$ & $x + \mu y'$ & $0$ & $\begin{pmatrix} 1 & 1 \\ 0 & 1 \end{pmatrix}$ & \\ \hline 
        5 & $0$ & $0$ & $x + \mu y'$ & \multirow{2}{*}{$0$} & \multirow{2}{*}{$1$} & \\ \cline{1-4}
        6 & $0$ & $0$ & $x + x' + \mu y'$ &  &  & \\ \hline
        7 & $\lambda y'$ & $\mu y'$ & $x$ & \multirow{2}{*}{$0$} & \multirow{2}{*}{$\diag(1, 0)$} & \\ \cline{1-4}
        8 & $\lambda y'$ & $\mu y'$ & $x + x'$ & & & \\ \hline
        9 & $0$ & $0$ & $x$ & \multirow{2}{*}{$0$} & \multirow{2}{*}{$\diag(1, \lambda)$} & \multirow{2}{*}{$\lambda \ne 0, 1$} \\ \cline{1-4}
        10 & $0$ & $0$ & $x + x'$ & & & \\ \hline
    \end{tabular}
    \end{center}
    Row 1 is up to simultaneous scaling of $\lambda, \mu$, while row 3 is up to simultaneous scaling of all the parameters.
\end{proposition}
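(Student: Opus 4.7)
The plan is to apply the preceding proposition with $\mfrak{g}_0$ the non-abelian $2$-dimensional Lie algebra $L_2$, which is the unique option since $\mfrak{g}_0$ is $2$-dimensional and non-abelian, with basis $x, y$ and bracket $[x, y] = x$. A Lie algebra structure on $\mfrak{g} = 2P$ with $L$ abelian and $\mfrak{g}_0 \cong L_2$ is then the data of: (i) a $2$-dimensional $L_2$-representation $\rho$ on $L$, encoded by matrices $X := \rho(x), Y := \rho(y)$ satisfying $XY + YX = X$; (ii) an injective (hence bijective) $1$-cocycle $d : L_2 \to L$ with $d(x) = X d(y) + Y d(x)$; (iii) an extended $2$-cohomology class $C \in H^2_e(L_2, L)$ with $C(x,x), C(y,y) \in L^{L_2} = \ker X \cap \ker Y$. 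Two such tuples yield isomorphic Lie algebras iff they differ by the combined action of $\Aut(L_2) = \{x \mapsto ax, \; y \mapsto cx + y : a \in k^\times, c \in k\}$ and the stabilizer of $\rho$ in $\GL(L)$.

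First, I classify the $2$-dimensional $L_2$-representations. Taking the trace of $XY + YX = X$ forces $\Tr X = 0$, so in characteristic $2$ both eigenvalues of $X$ coincide; writing $X = \lambda I + N$ with $N$ nilpotent, the scalar case forces $X = 0$ (since $\lambda I$ centralizes $Y$), and then $Y$ is arbitrary with conjugation normal forms $Y = 0$, $Y = \diag(\alpha, \alpha + 1)$, or a non-diagonalizable Jordan block. When $X \ne 0$, I conjugate $X$ to a chosen Jordan normal form and solve $XY + YX = X$ for $Y$, which pins $Y$ down up to the centralizer of $X$.

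Second, for each representation I solve the linear $1$-cocycle equation $d(x) = Xd(y) + Yd(x)$ for $(d(x), d(y)) \in L \oplus L$, mod out by $1$-coboundaries $d(z) = \rho(z)w$, and retain only bijective $d$; the resulting $(x', y') = (d(x), d(y))$ is a basis of $L$, which is compared to the working basis $v, w$ by the residual symmetry. Third, an extended $2$-cocycle $C$ is specified by three elements $C(x,x), C(x,y), C(y,y) \in L$ subject to a single $2$-cocycle relation (as $\Lambda^3 L_2 = 0$) modulo a coboundary subspace (the image of a linear map $L \to S^2 L_2^* \otimes L$, symmetric in characteristic $2$), with the extra constraint $C(x,x), C(y,y) \in L^{L_2}$. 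Fourth, I apply the residual $\Aut(L_2)$ together with the stabilizer of $\rho$ in $\GL(L)$ to reduce each combination to the canonical form in the table.

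The main obstacle is the case bookkeeping in the fourth step: different normal forms for $\rho$ can produce overlapping brackets after identifying $(x', y')$ with $(v, w)$, so care is needed to avoid double counting and to coordinate the residual scalings. In particular, in rows $1$ and $3$ the residual symmetry fails to fully normalize the free parameters, which therefore persist in the table with the indicated simultaneous-scaling relations; and one must verify throughout that $L^{L_2}$ is nonzero precisely in those cases where a nonzero $C(x,x)$ or $C(y,y)$ is needed.
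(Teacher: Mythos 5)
Your overall strategy is the same as the paper's: specialize the abelian-$\ker d$ proposition to $\mfrak{g}_0 = L_2$, classify the two-dimensional $L_2$-modules via $\rho(x)\rho(y)+\rho(y)\rho(x)=\rho(x)$, solve the $1$-cocycle condition for a bijective $d$, take the symmetric $C$ modulo coboundaries subject to $C(x,x),C(y,y)\in L^{\mfrak{g}_0}$, and normalize by the residual symmetries. Your handling of $\rho(x)\ne 0$ (put the traceless $\rho(x)$ in Jordan form, then solve for $\rho(y)$ up to the centralizer) would reproduce rows 1--3 just as the paper does via its list of irreducibles and extensions.

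There is, however, a concrete error in the $\rho(x)=0$ branch. When $\rho(x)=0$ the module is simply an arbitrary matrix $Y=\rho(y)$, whose conjugacy normal forms are scalars $\alpha I$, $\diag(\alpha,\beta)$ with arbitrary distinct $\alpha,\beta$, or a non-diagonalizable Jordan block; your list ``$Y=0$, $\diag(\alpha,\alpha+1)$, or a Jordan block'' both omits cases and includes an impossible one. The relation $\beta=\alpha+1$ between the eigenvalues is forced only when $\rho(x)\ne 0$; what actually constrains the $\rho(x)=0$ case is the existence of a bijective $1$-cocycle, which reads $(\rho(y)+1)dx=\rho(x)dy=0$ and forces $1$ to be a (generalized) eigenvalue of $\rho(y)$ --- in particular $Y=0$ admits no bijective cocycle and never occurs. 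The surviving normal forms are $\rho(y)=1$, $\rho(y)=\diag(1,\lambda)$ with $\lambda$ arbitrary, and the unipotent Jordan block, i.e.\ exactly the paper's rows 4--10; following your list literally you would miss rows 5, 6, 9, 10. Two smaller imprecisions, harmless but worth fixing: for a symmetric extended $2$-cochain on the $2$-dimensional $\mfrak{g}_0$ the cocycle condition is not ``one relation coming from $\Lambda^3\mfrak{g}_0=0$'' but is precisely the invariance of $C(x,x)$ and $C(y,y)$ (no condition on $C(x,y)$), which coincides with the $p(C)$ constraint you impose anyway; and the coboundaries are the image of $\Hom(\mfrak{g}_0,L)$ under the splitting-change map, not of $L$.
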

\begin{proof}
    If $\mfrak{g}_0$ is nonabelian, it has nonzero bracket $[x, y] = x$, and $L$ is a two-dimensional representation of $\mfrak{g}_0$. Then the condition for $d$ to be a 1-cocycle is that
    \begin{align*}
        d[x, y] &= \rho(y)dx + \rho(x)dy \\
        \implies (\rho(y) + 1)dx &= \rho(x)dy
    \end{align*}
    and the 2-coboundaries are maps $V \otimes V \to L$ with
    \begin{align*}
        x \otimes x &\mapsto 0 \\
        y \otimes y &\mapsto 0 \\
        x \otimes y &\mapsto (\rho(y) + 1)\varphi(x) + \rho(x)\varphi(y)
    \end{align*}
    for $\varphi: V \to L$. In particular, if either of $\rho(y) + 1$ or $\rho(x)$ is invertible, WLOG we can set $\mathbf{C} = 0$.
    
    We know $\mfrak{g}_0$ has two families of irreducible representations: the 1-dimensional representations where $y \mapsto \lambda \in k$ and $x \mapsto 0$, and the 2-dimensional representations where $y \mapsto \begin{pmatrix} \alpha & 0 \\ 0 & \alpha + 1 \end{pmatrix}$ and $x \mapsto \begin{pmatrix} 0 & b \\ b & 0 \end{pmatrix}$, $b \ne 0$, $\alpha, b \in k$. Let $v, w$ be a basis of $L$ such that $\rho(x), \rho(y)$ act as these matrices.
    
    Then, if $L$ is 2-dimensional irreducible, we can first scale so that $b = 1$ since $b \ne 0$. Also, $L^{\mfrak{g}_0} = 0$ so $p(C) = 0$. Since $\rho(x)$ is invertible, we can set $\mathbf{C} = 0$. Therefore, $C = 0$, and we always have $[x, y] = x$, $[x, x] = 0$, $[y, y] = 0$. It remains to determine $d$. If $dx = \lambda v + \mu w$, then the 1-cocycle condition implies $dy = \alpha\mu v + (\alpha + 1)\lambda w$. By simultaneously scaling $v, w$, we can simultaneously scale $\lambda, \mu$. (row 1)
    
    Otherwise, $L$ is an extension of two 1-dimensional representations, say $\rho(x) = \begin{pmatrix} 0 & \lambda \\ 0 & 0 \end{pmatrix}$ and $\rho(y) = \begin{pmatrix} \alpha & \mu \\ 0 & \beta \end{pmatrix}$. 
    \begin{enumerate}
    
    \item If $\rho(x) \ne 0$, scale $x$ so $\lambda = 1$, translate $y$ so $\mu = 0$, and we need $\beta = \alpha + 1$. Therefore, $xv = 0$, $xw = v$, $yv = \alpha v$, and $yw = (\alpha + 1)w$. Writing $dx = av + bw$ and $dy = cv + ew$, the 1-cocycle condition imposes conditions on $a, b, c, e$ based on the value of $\alpha$. If $\alpha \ne 0$, $L^{\mfrak{g}_0} = 0$ while the image of the 2-coboundaries is $L$; otherwise, $L^{\mfrak{g}_0}$ and the image of the 2-coboundaries are both $\text{span}(v)$. 

    If $\alpha \ne 0$, we can set $C = 0$. In order for $d$ to be a 1-cocycle, we need $b = 0$, so $dx = av$ with $a \ne 0$, and then $dy = cv + (\alpha + 1)aw$. So the brackets in this case are $[x', x] = 0$, $[x', y] = \alpha x'$, $[y', x] = (\alpha + 1)x'$, $[y', y] = a^{-1}cx' + (\alpha + 1)y'$. (row 2)

    If $\alpha = 0$, $\mathbf{C}_{xx}, \mathbf{C}_{yy}$ are multiples of $v$ and $\mathbf{C}_{xy}$ can be translated to a multiple of $w$. The 1-cocycle condition implies $a = e$, and $d$ is injective, so $a^2 \ne bc$. Hence, the brackets are
    \begin{align*}
        [x', x] &= bv \\
        [x', y] &= bw \\
        [y', x] &= av \\
        [y', y] &= aw \\
        [x, x] &= pv \\
        [y, y] &= qv \\
        [x, y] &= x + rw
    \end{align*}
    up to simultaneous scaling, since $v,w$ can be simultaneously rescaled. (row 3)

    \item If $\rho(x) = 0$, we casework on the Jordan normal form of $\rho(y)$. Note that for $d$ to be a bijective 1-cocycle, we need $(\rho(y) + 1)dx = 0$, so $\rho(y) + 1$ must have nonzero kernel. Therefore, at least one (generalized) eigenvalue of $\rho(y)$ must be $1$.

    \begin{enumerate}

    \item If $\rho(y)$ is not diagonalizable, then $\rho(y) = \begin{pmatrix} 1 & 1 \\ 0 & 1 \end{pmatrix}$. Then $dx$ is a multiple of $e_0$ while $dy$ can be anything; we can translate $y$ by $x$ so that $dy$ is a multiple of $e_1$ and scale $x$ so that $dx = \lambda e_0$, $dy = \lambda e_1$. Since $\rho(y)$ is invertible, $\mathbf{C}_{xx} = \mathbf{C}_{yy} = 0$, and $\rho(y) + 1$ has image $\text{span}(e_0)$, so $\mathbf{C}_{xy}$ can be translated to a multiple of $e_1$. Then $[x', x] = [y', x] = 0$, $[x', y] = x'$, and $[y', y] = y' + x'$. Meanwhile $[x, x] = [y, y] = 0$, and $[x, y] = x + \mu y'$. (row 4)

    \item If $\rho(y) = 1$, then $\mathbf{C}_{xx} = \mathbf{C}_{yy} = 0$, while $\mathbf{C}_{xy}$ can be anything. So $[x', x] = [y', x] = 0$, $[x', y] = x'$, $[y', y] = y'$, $[x, x] = [y, y] = 0$, and $[x, y] = x + \lambda x' + \mu y'$. By scaling $x$, $\lambda = 0, 1$. (rows 5 and 6)

    \item If $\rho(y) = \diag(1, 0)$, then $\mathbf{C}_{xx}, \mathbf{C}_{yy} \in \text{span}(e_1)$, while $\mathbf{C}_{xy}$ is a multiple of $e_0$. Then $dx$ is a multiple of $e_0$ while $dy$ can be anything; again, translating $y$ by $x$ we can make it so that $dy$ is a multiple of $e_1$. Scaling, $x$ we can make it so that either $\mathbf{C}_{xy} = dx$ or $\mathbf{C}_{xy} = 0$. Therefore, the nonzero brackets are $[x, x] = \lambda y'$, $[y, y] = \mu y'$, $[x', y] = x'$, and $[x, y] = x + x'$ or $[x, y] = x$. (rows 7 and 8)
    
    \item If $\rho(y) = \diag(1, \lambda)$ for $\lambda \ne 1, 0$, then the situation is the same as above except that $\mathbf{C}_{xx} = \mathbf{C}_{yy} = 0$. Therefore, the nonzero brackets are $[y', y] = \lambda y'$, $[y', x] = x'$, and $[x, y] = x + x'$ or $[x, y] = x$. (row 9 and 10)
    \end{enumerate}
\end{enumerate}
\end{proof}
\begin{remark}
    Row 3 with $\xi = b = c = 0$, $\omega = a = 1$ is the Lie algebra structure on $\mfrak{gl}(P)$, as with basis $x, x', y, y'$ described in Section \ref{glp_irreps}, we have $[x, y] = x + y'$, $[x, x] = x'$, $[y, y] = 0$, $[x, y'] = x'$, $[y, y'] = y'$, and $x'$ central.
\end{remark}

\begin{proposition}
    If $L$ is not abelian, we have the following two cases, where $[x', y'] = x'$:
    \begin{center}
    \begin{tabular}{| c | c | c | c | c | c | c | }
    $[x, x]$ & $[y, y]$ & $[x, y]$ & $[x', x]$ & $[x', y]$ & $[y', x]$ & $[y', y]$ \\ \hline
    \multirow{2}{*}{$0$} & \multirow{2}{*}{$y'$} & \multirow{2}{*}{$\mu x'$} & $x'$ & $x + y'$ & $x + y'$ & \multirow{2}{*}{$\nu x'$} \\ \cline{4-6} 
     &  & & $0$ & $x$ & $x$ & \\ \hline 
    \end{tabular}
    \end{center}
\end{proposition}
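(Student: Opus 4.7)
The plan is to specialize the framework of Theorem~\ref{lie_m_np_classification} to $\mfrak{g} = 2P$ with $L = \ker d$ nonabelian. By Proposition~\ref{p_lie_algebras}, $L \cong L_2$, and after rescaling one basis vector we may assume $[x', y'] = x'$ (in characteristic~$2$ this also gives $[y', x'] = x'$). Since $d \colon V \to L$ is a bijection when $\mfrak{g} = nP$, part~(2) of the theorem forces $V_f$ to be the adjoint representation of $L$ transported along $d$: with $dx = x'$ and $dy = y'$,
\[
f(x')x = 0, \qquad f(x')y = x, \qquad f(y')x = x, \qquad f(y')y = 0,
\]
so $f$ is injective, $f(x')^2 = 0$, and $f(y')^2 = f(y')$.

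The first step is to pin down the diagonal of $C$. The constraints $f(C(x \otimes x)) = f(x')^2 = 0$ and $f(C(y \otimes y)) = f(y')^2 = f(y')$ from Theorem~\ref{lie_m_np_classification}, combined with injectivity of $f$, force $C(x \otimes x) = 0$ and $C(y \otimes y) = y'$. Together with the symmetry and diagonal-vanishing of $B$, this yields $[x, x] = 0$ and $[y, y] = y'$, the diagonal brackets shared by both rows.

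Next I would analyze $A \colon L \to \Hom(V, L)$ in coordinates. Writing $A(x')x = a_1 x' + a_2 y'$, $A(x')y = c_1 x' + c_2 y'$, $A(y')x = b_1 x' + b_2 y'$, and $A(y')y = d_1 x' + d_2 y'$, the derivation relation (\ref{a_derivation}) evaluated at $(z, w, v) = (x', y', x)$ and $(x', y', y)$, together with the compatibility identities $A(C(v \otimes v)) = A(v')f(v') + (\ad v')A(v')$ for $v \in \{x, y\}$, force $a_1 = b_2 = c_2$ and $a_2 = b_1 = d_2 = 0$, leaving only $a_1, c_1, d_1$ free. Absorbing the coboundaries $A \mapsto A + D\varphi$ from changes of splitting $\varphi \colon V \to L$, together with the residual $L$-module automorphisms of $V_f \cong L_{\mathrm{ad}}$ (which form a one-parameter group of scalings, since $\End_L(L_{\mathrm{ad}}) = k$), reduces the data to the two rows of the table, corresponding to whether the class of $A$ in $H^1(L, \Hom(V_f, L))$ is trivial (Row~2) or nontrivial (Row~1). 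The 1-cocycle relation $B(x \otimes y)' = A(x')y + A(y')x$ then forces $B \equiv 0$ in both cases, while $C(x \otimes y) = \mu x'$ is pinned by the $L$-component of the Jacobi identity on $(x, y, z)$ for $z \in L$; the parameter $\nu \in k$ records the surviving degree of freedom in $A(y')y = \nu x'$, equivalently $[y', y] = \nu x'$, and skew-symmetry forces $C(y \otimes x) = (\mu + 1)x'$.

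The main obstacle is the bookkeeping: the derivation condition couples the entries of $A(x')$ and $A(y')$ nontrivially because $L$ is nonabelian, and one must simultaneously track which shifts are available from changes of splitting versus $L$-module automorphisms of $V_f$. I expect the final reduction to be essentially linear algebra once the constraints are assembled, with the two rows emerging as the two cohomology classes in $H^1(L, \Hom(V_f, L))$ that survive absorption of coboundaries.
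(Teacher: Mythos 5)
Your route through the framework is the same as the paper's, and the first two-thirds of it is fine: identifying $V_f$ with the adjoint representation via $d$, using injectivity of $f$ together with $f(C(v\otimes v))=f(v')f(v')$ to pin $C(x\otimes x)=0$ and $C(y\otimes y)=y'$, the coordinate constraints $a_1=b_2=c_2$, $a_2=b_1=d_2=0$ on $A$, the conclusion $B=0$, and $C(x\otimes y)=\mu x'$ from the $[C,z]$ relation all match the paper's computation (your treatment of the stray coefficient $c_1$ via coboundaries is in fact more careful than the paper, which silently drops it).

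The genuine gap is the final reduction. You claim the two rows of the table are the two classes of $A$ in $H^1(L,\Hom(V_f,L))$ that ``survive absorption of coboundaries.'' They are not: because $[x',y']=x'$, the change of splitting $\varphi$ with $\varphi(x)=\beta y'$ shifts $A(x')x$ by $[x',\beta y']=\beta x'$ (and shifts $A(x')y$, $A(y')x$ by $\beta y'$), i.e.\ it shifts exactly the parameter $\lambda=a_1=b_2=c_2$ separating the two rows, while $\varphi(y)=\gamma x'$ shifts $d_1$. Consequently every $A$ satisfying your constraints is itself a coboundary (take $\beta=a_1$, $\alpha+\delta=c_1$, $\gamma=d_1$): there is only one cohomology class, and literally quotienting by all coboundaries collapses the two rows rather than producing them. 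Concretely, in the row-2 algebra the substitution $x\mapsto x+y'$ (which commutes with $d$) yields $[x',x+y']=x'$, $[x',y]=(x+y')+y'$, $[y',x+y']=(x+y')+y'$, $[x+y',y]=(\mu+\nu)x'$, i.e.\ the row-1 brackets with $\mu$ replaced by $\mu+\nu$; so no cohomological invariant can distinguish the rows (this also shows the printed table is redundant, which is why the slip is easy to make). The paper does not obtain the dichotomy from $H^1$: it works within a fixed splitting, derives $A(x')x=\lambda x'$, $A(x')y=\lambda y'$, $A(y')x=\lambda y'$, $A(y')y=\nu x'$, and then normalizes $\lambda$ to $0$ or $1$ by rescaling $x,x'$. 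To rescue your argument you must either refrain from absorbing the splitting shifts with $\varphi(x)\in k y'$ and normalize $\lambda$ by scaling as the paper does, or carry the absorption to the end and arrive at a single family (row 2, with $\nu$ also removable by $y\mapsto y+\nu x'$), which still verifies the proposition as stated but not by the mechanism you describe.
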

\begin{proof}
    Say that $L$ has nonzero bracket $[z, w] = z$. Then we must determine $f$, $A$, $B$, and $C$ as described in \ref{lie_m_np_classification}. Setting $x' = z$ and $y' = w$, we have $f(x')y = f(y')x = x$. Then 
    \begin{align*}
        f(C(x \otimes x)) &= f(x')f(x') = 0 \implies C(x \otimes x) = 0 \\
        f(C(y \otimes y)) &= f(y')f(y') = f(y') \implies C(y \otimes y) = y'.
    \end{align*}
    We have
    $$A(C(x \otimes x)) = 0 = A(x')f(x') + (\ad x')A(x')$$ implies that $[x', A(x')x] = 0$, so $A(x')x = \lambda x'$, and $A(x')x = [x', A(x')y]$, so $A(x')y = \lambda y'$. By scaling $x$ and $x'$, we can make it so that $\lambda = 1$ or $0$.
    Also,
    $$A(C(y \otimes y)) = A(y') = A(y')f(y') + (\ad y')A(y').$$
    Hence, $[y', A(y')x] = 0$ and $A(y')x = \mu y'$, and $A(y')y = [y', A(y')y]$ so $A(y')y = \nu x'$.
    Substituting $x', y', x$ into the Jacobi identity (Eq. \ref{a_derivation}, see Section \ref{gl_mnp_framework}), we get that
    $[A(y')x, x'] = \lambda x$, so $\mu = \lambda$.
    Substituting $x', y', y$ into Eq. \ref{a_derivation}, we get that $[A(y')y, x'] = 0$, which is already true. So we conclude that $A(x') = \lambda I, A(y') = \begin{pmatrix} 0 & \nu \\ \lambda & 0 \end{pmatrix}$. This implies that $B(x \otimes y) = 0$.
    
    The condition on $[B, z]$ gives no additional information, while
    the condition on $[C, z]$ implies that $[C(x \otimes y), x'] = 0$ and $[C(x \otimes y), y'] = C(x \otimes y)$. Therefore, $C(x \otimes y) = \mu x'$. Hence, the nonzero brackets are:
    \begin{itemize}
        \item $[x, y] = \mu x'$
        \item $[y, y] = y'$
        \item $[x', y] = [x, y'] = x + \lambda y'$
        \item $[x', x] = \lambda x'$
        \item $[y', y] = \nu x'$
        \item $[x', y'] = x'$
    \end{itemize}
    where $\lambda = 0, 1$ and $\mu, \nu \in k$.
\end{proof}

\section{A propsed analog of the $p$-center for Lie algebras in $\Ver_4^+$}\label{pcenter}
In this section, we begin investigating the center of the universal enveloping algebra  $U(\mfrak{g})$ of a Lie algebra $\mfrak{g}$ by conjecturing an analog of the $p$-center for $\mfrak{g}$, but in this case, we will have a ``$4$-center'' rather than a $2$-center. We first need a notion of a restricted Lie algebra.
\begin{definition}
    Let $\mfrak{g}$ be a Lie algebra in $\Ver_4^+$ with $\mfrak{g} \subset \mfrak{gl}(m \cdot \1 + nP)$. Recall that there is a multiplication map $\mfrak{gl}(m \cdot \1 + nP) \times \mfrak{gl}(m \cdot \1 + nP) \to \mfrak{gl}(m \cdot \1 + nP)$ given by evaluation. We say that $\mfrak{g}$ is restricted if for all $x \in \mfrak{g}$, we have $x^{[2]} \in \mfrak{g}$, where $x^{[2]}$ is the square of $x$ taken in $\mfrak{gl}(m \cdot \1 + nP)$.
\end{definition}

\begin{remark}
    The classical definition of a restricted Lie algebra, that there exists a map $\mfrak{g} \to \mfrak{g}, x \mapsto x^{[2]}$ such that the map $\xi: \mfrak{g} \to U(\mfrak{g})$ defined by $x \mapsto x^2 - x^{[2]}$, is semilinear and takes values in $Z(\mfrak{g})$, is no longer valid, since it is no longer the case that $x^2 - x^{[2]}$ is central. However, we can still define the notion of a restricted algebra for Lie subalgebras of $\mfrak{gl}(X)$.
\end{remark}

Let $\mfrak{g}$ be a restricted Lie algebra. Denote by $X^2$ the square of $X$ in $U(\mfrak{g})$. For $X, Y \in \mfrak{g}$,
\begin{equation*}
    [X, [X, Y]] = X^{[2]}Y+YX^{[2]} + Y'(X^{[2]})'+X'YX'+YX'X' = [X^{[2]}, Y] + [X', YX'].
\end{equation*}

Therefore, if $X \in \ker d$, $X^2 - X^{[2]}$ is central, but when $X' \ne 0$, we instead find that a degree 4 polynomial in $X$ will be central given some conditions on $X, X'$. That is:
\begin{itemize}
    \item If $X^{[2]} = {X'}^{[2]} = 0$, then
    \begin{equation*}
    \ad(X)^4(Y) = X'(X'YX')X' = 0
\end{equation*}
so $X^4$ is central.
\item If $X^{[2]} = 0$ and ${X'}^{[2]} = X'$, then 
\begin{equation*}
    \ad(X)^4(Y) = X'YX' + X'YX' + X'YX' + YX' = X'YX' + YX'.
\end{equation*}
so $X^4 - X^2$ is central.
\item If $X^{[2]} = X$ and ${X'}^{[2]} = 0$, $[X, X'] = X'$ or $0$, then \begin{equation*}
    \ad(X)^4(Y) = [X, [X, Y]] + XX'YX' + X'YX'X+ X'XYX' + X'YXX' = 0,
\end{equation*}
so $X^4$ is central.
\end{itemize}

Let $x_1, \dots, x_m$ be a basis for $m \cdot \1$ and $y_1, \dots, y_n, y'_1, \dots, y'_n$ be a basis for $nP$, with $y'_i = dy_i$, with corresponding dual basis $x_i^*, y_i^*, {y'_i}^*$. Then
\begin{itemize}
    \item $x_i \otimes x_j^*$, $dy_i \otimes x_j^*$, and $x_i \otimes y_j^*$ are in the kernel of $d$
    \item $x_i \otimes {y'_j}^*$, $y_i \otimes x_j^*$, $y_i \otimes y_j^*$ when $i \ne j$, and $dy_i \otimes dy_j^*$ when $i \ne j$ satisfy $X^{[2]} = {X'}^{[2]} = 0$
    \item $y_i \otimes {y'_i}^*$ satisfies $X^{[2]} = 0, {X'}^{[2]} = X'$
    \item $y_i \otimes y_i^*$ satisfies $X^2 = X, {X'}^{[2]} = 0, [X, X'] = X'$.
\end{itemize}

\begin{proposition}
    The following elements are central in $U(\mfrak{gl}(m \cdot \1 + nP))$:
    \begin{itemize}
        \item $(x_i \otimes x_j^*)^2$, $(y'_i \otimes x_j^*)^2$, $(x_i \otimes y_j^*)^2$ for all $i, j$,
        \item $(x_i \otimes y'_j)^4$, $(y_i \otimes x_j^*)^4$ for all $i, j$, $(y_i \otimes y_j^*)^4$, $(y'_i \otimes {y'_j}^*)^4$ for $i \ne j$,
        \item $(y_i \otimes {y'_i}^*)^4 - (y_i \otimes {y'_i}^*)^2$ ,
        \item $(y_i \otimes y_i^*)^4$.
    \end{itemize}
    We call this the ``$4$-center'' of $U(\mfrak{gl}(m \cdot \1 + nP))$.
\end{proposition}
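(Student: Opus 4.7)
The plan is to treat the proposition as a case-by-case verification built on the three structural cases (plus the sub-case $X \in \ker d$) analysed immediately before the statement. For each listed $X$ I would first compute $X^{[2]}$ and $(X')^{[2]}$ directly, determine which case $X$ lies in, and then invoke the centrality conclusion already drawn for that case.

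The classification step is purely mechanical. For $X = a \otimes b^* \in (m\cdot\1 + nP) \otimes (m\cdot\1+nP)^*$ one has $X^{[2]} = b^*(a)\cdot(a\otimes b^*)$ and $X' = (da)\otimes b^* + a\otimes(db^*)$, and the derivation rules $dx_i=0$, $dy_i = y'_i$, $dy'_i = 0$, $dx_i^* = 0$, $dy_i^* = 0$, $d(y'_i)^* = y_i^*$ reduce each case to a one-line computation. For example $x_i\otimes(y'_j)^*$ has $X^{[2]} = 0$ since $(y'_j)^*(x_i) = 0$ and $X' = x_i\otimes y_j^*$ has $(X')^{[2]} = 0$ similarly, putting it in the case $X^{[2]} = (X')^{[2]} = 0$; the element $y_i\otimes(y'_i)^*$ has $X' = y'_i\otimes(y'_i)^* + y_i\otimes y_i^*$, a sum of commuting idempotents whose composition-square equals itself, putting it in the case $X^{[2]} = 0$, $(X')^{[2]} = X'$; and $y_i\otimes y_i^*$ is the diagonal idempotent with $X^{[2]} = X$, $(X')^{[2]} = 0$, $[X,X'] = X'$, putting it in the last case. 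The other elements on the list are dispatched identically.

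Once the case is identified, the centrality claim follows from the corresponding calculation already in the excerpt. In the sub-case $X \in \ker d$ with $X^{[2]} = 0$, $[X^2, Y]$ coincides with the iterated bracket $\ad(X)^2(Y)$ (the braided correction vanishes because $X' = 0$) and $\ad(X)^2(Y) = X^{[2]}Y + YX^{[2]} = 0$, giving $X^2$ central. In the case $X^{[2]} = (X')^{[2]} = 0$ the iterated computation $\ad(X)^4(Y) = X'(X'YX')X' = 0$ gives $X^4$ central; in the case $X^{[2]} = 0$, $(X')^{[2]} = X'$ one gets $\ad(X)^4(Y) = \ad(X)^2(Y)$, giving $X^4 - X^2$ central; and in the case $X^{[2]} = X$, $(X')^{[2]} = 0$, $[X,X']\in\{0, X'\}$ one again gets $\ad(X)^4(Y) = 0$, giving $X^4$ central.

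The principal obstacle is bridging from the iterated adjoint identity on $\mfrak g$ to the centrality statement about $X^4$ in $U(\mfrak g)$. In an ordinary characteristic-$2$ Lie algebra one uses $\ad(X^{2^k}) = \ad(X)^{2^k}$ as operators on the enveloping algebra, but in $\Ver_4^+$ the braided commutator $[X^2, Y]_U$ exceeds $\ad(X)^2(Y)$ by the correction $X'YX' + YX'X'$, so one must verify that under the relevant case hypotheses on $X^{[2]}$ and $(X')^{[2]}$ these correction terms either cancel outright or reassemble into $\ad(X)^2$. The case hypotheses are exactly what make this bookkeeping close, and since $\ad(X)^{2^k}$ remains a braided derivation of $U(\mfrak g)$ in characteristic $2$, the identity then extends from the generators $\mfrak g$ to all of $U(\mfrak g)$.
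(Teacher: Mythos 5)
Your proposal is correct and follows essentially the same route as the paper, whose proof of this proposition is exactly the case analysis immediately preceding it: classify each basis element of $\mathfrak{gl}(m\cdot\1+nP)$ by computing $X^{[2]}$ and $(X')^{[2]}$, then invoke the $\ad(X)^2$ and $\ad(X)^4$ computations (your closing remark on passing from $\ad(X)^4(Y)=0$ on $\mathfrak{g}$ to centrality of $X^4$ in $U(\mathfrak{g})$ makes explicit a bridging step the paper leaves implicit). The one caveat, which you share with the paper's own statement, is the diagonal element $x_i\otimes x_i^*$: it lies in $\ker d$ but has $X^{[2]}=X\neq 0$, so the $\ker d$ analysis yields centrality of $(x_i\otimes x_i^*)^2+x_i\otimes x_i^*$ rather than of $(x_i\otimes x_i^*)^2$ itself, and your ``dispatched identically'' glosses over this exactly as the proposition's first bullet does.
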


\begin{conjecture}
  In $\mfrak{gl}(m \cdot \1 + nP)$, and hence in any restricted subalgebra, if $X \notin \ker d$, some degree 4 polynomial $X^4 + {X^{[2]}}^2 + F(X, X')$ is
  central, where $F$ is a noncommutative polynomial function of $X, X^{[2]}$ and $X'$ of degree at most 4.
\end{conjecture}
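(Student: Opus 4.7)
The plan is to compute the operator $\ad(X)^4$ on $U(\mfrak{g})$ and recognize it as the inner adjoint action $\ad(P)$ for some degree-$4$ polynomial $P$ in $X, X', X^{[2]}$; then $X^4 + P$ will be central in $U(\mfrak{g})$ (working in characteristic $2$), and rewriting $P = (X^{[2]})^2 + F(X, X')$ yields the conjectured form. The starting point is the identity $\ad(X)^2(Y) = [X^{[2]}, Y] + [X', YX']$ established in the preceding paragraph, which (using $X' \in \ker d$ and $YX'X' = Y(X')^{[2]}$) I would rewrite as the operator identity
\[
\ad(X)^2 \;=\; \ad(X^{[2]}) \;+\; L_{X'} R_{X'} \;+\; R_{(X')^{[2]}},
\]
where $L_a$ and $R_a$ denote left and right multiplication in $U(\mfrak{g})$.

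Squaring this identity expands $\ad(X)^4$ as a sum of nine operators, which I would simplify as follows: (a) $L_{X'} R_{X'}$ composed with itself collapses to $L_{(X')^{[2]}} R_{(X')^{[2]}}$ since $(X')^2 = (X')^{[2]}$ in $U(\mfrak{g})$; (b) $\ad(X^{[2]})^2 = \ad((X^{[2]})^{[2]}) + L_{[X,X']} R_{[X,X']} + R_{[X,X']^{[2]}}$, obtained by applying the same identity to $X^{[2]}$ together with $(X^{[2]})' = [X, X']$; (c) the mixed terms such as $\ad(X^{[2]}) \circ L_{X'} R_{X'}$ unfold via the Leibniz rule
\[
[A, BC] \;=\; [A, B] C + B [A, C] + B' [A', C],
\]
which is the general derivation identity for brackets in $\Ver_4^+$. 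The aim is to massage each of the nine resulting operators into the form $\ad(Q)$ for $Q$ polynomial of degree at most $4$ in $X, X', X^{[2]}$. In parallel, I would expand $[X^4, Y]$ and $[(X^{[2]})^2, Y]$ by iterating the same Leibniz rule to produce operators of the same shape; matching these against the expansion of $\ad(X)^4$ isolates the correction polynomial $F(X, X')$. As a consistency check, specializing to the basis elements in the preceding proposition (each of which has at least one of $X^{[2]}$ or $(X')^{[2]}$ zero, and hence most of the nine operators vanish) should reproduce the central polynomials $X^4$, $X^4 + X^2$, etc., already tabulated.

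The main obstacle is the middle step: showing that every operator appearing in the expanded $\ad(X)^4$ is genuinely inner, i.e.~of the form $\ad(Q)$, rather than a non-adjoint operator such as $L_a R_b$ for $a, b \in \ker d$. Converting composites like $L_{X'} R_{X'}$ with $\ad(X^{[2]})$ into the adjoint action of a single polynomial requires delicate identities relating $[X, X']$, $X^{[2]}$, and $(X')^{[2]}$, and these identities depend on finer structure of $X$ than the restricted hypothesis alone provides. Indeed, the proposition verifies the conjecture only on basis elements — each having at least one of $X^{[2]}, (X')^{[2]}$ trivial — which strongly suggests that the fully general case requires genuinely new input: perhaps a categorical analog of the Jacobson $p$-th power formula and $p$-center map adapted to $\Ver_4^+$, or a case analysis by the Jordan type of $X$. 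This is presumably why the statement is left as a conjecture rather than a theorem.
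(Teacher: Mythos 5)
This statement is left open in the paper: it is stated as a conjecture, and the only supporting evidence is the preceding proposition, which exhibits central polynomials ($X^4$, $X^4 - X^2$, etc.) for the standard basis elements of $\mfrak{gl}(m \cdot \1 + nP)$ — each of which is special in that at least one of $X^{[2]}$, $(X')^{[2]}$ vanishes and $[X,X']$ is $0$ or $X'$, so that almost all terms in the expansion of $\ad(X)^4$ drop out. Your proposal does not close the conjecture either, and you say so yourself: the decisive step — showing that the nine operators arising from squaring $\ad(X)^2 = \ad(X^{[2]}) + (\text{non-inner terms})$ reassemble into $\ad(Q)$ for a single polynomial $Q$ of degree at most $4$ in $X, X', X^{[2]}$ — is precisely what is missing, and no mechanism is offered for converting composites like $L_{X'}R_{X'}\circ\ad(X^{[2]})$ or the pure right-multiplication terms into inner derivations for a general restricted element. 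So what you have is a plausible computational strategy (and a reasonable consistency check against the tabulated special cases), not a proof; the conjecture remains exactly as open after your argument as before it.

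Two technical cautions if you pursue this route. First, you silently replace $(X')^2$ (the square in $U(\mfrak{g})$) by $(X')^{[2]}$ (the square in $\mfrak{gl}$); for elements of $\ker d$ these are not equal — their difference is a central element of $U(\mfrak{g})$ (this is the content of the ``$2$-center'' part of the proposition), which is harmless for centrality claims but does change the operator identity you write for $\ad(X)^2$, and hence every subsequent term, so it must be tracked. Second, the Leibniz rule you quote, $[A,BC] = [A,B]C + B[A,C] + B'[A',C]$, needs to be derived carefully from the paper's convention $[A,B] = AB - BA - A'B'$ together with $d^2 = 0$; the placement of the primed factors is convention-dependent, and an error there propagates through the entire matching of $\ad(X)^4$ against $[X^4 + (X^{[2]})^2 + F, \,\cdot\,]$. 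Being explicit about these two points would at least put the reduction ``conjecture $\Leftrightarrow$ innerness of a specific list of operators'' on firm footing, which would itself be a useful lemma even without resolving the conjecture.
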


\section{Quadratic Casimir elements of $\mfrak{gl}(m \cdot \1 + P)$}

In this section, we find quadratic Casimir elements of $U(\mfrak{gl}(m \cdot \1 + nP))$, giving central elements of $U(\mfrak{gl}(m \cdot \1 + nP))$ not in the ``$4$-center''.

Recall that given a nondegenerate bilinear form $B: \mfrak{g} \otimes \mfrak{g} \to k$ that is invariant under the adjoint action, i.e.
\begin{equation*}
    B(\ad_X Y, Z) + B(Y, \ad_X Z) = 0,
\end{equation*}
we get an isomorphism $\mfrak{g}^* \cong \mfrak{g}$. The Casimir element for $B$ is the image of
\begin{equation*}
    \1 \xrightarrow{coev} \mfrak{g} \otimes \mfrak{g}^* \xrightarrow{B} \mfrak{g} \otimes \mfrak{g} \to U(\mfrak{g})
\end{equation*}
and will be central in $U(\mfrak{g})$.

The adjoint-invariant condition on $B$ is equivalent to the isomorphism $\varphi: \mfrak{g} \cong \mfrak{g}^*$ satisfying 
\begin{equation*}
    \varphi_{[x, y]}(-) = \varphi_y([x, -]),
\end{equation*}
i.e. $\varphi$ is a map of $\mfrak{g}$-modules.

\begin{remark}
    One can check that the Killing form $B(x,y) = \Tr (\ad x \circ \ad y)$ is zero on $\mfrak{gl}(P)$.
\end{remark}

Again, let $x_1, \dots, x_m$ be a basis for $m \1$ and $y_1, \dots, y_n$, $y'_1, \dots, y'_n$ be a basis for $nP$, with dual basis $x_i^*, y_i^*, {y'_i}^*$. Note that $d({y'_i}^*) = y_i^*$. Let $S := \{x_1, \dots, x_m, y_1, \dots, y_n, y'_1, \dots, y'_n\}$; then a basis for $\mfrak{gl}(m \cdot \1 + nP) = (m \cdot \1+nP) \otimes (m \cdot \1+nP)^*$ is $z \otimes w^*, z, w \in S$. 

\begin{theorem}
    Adjoint-invariant bilinear forms on $\mfrak{g} = \mfrak{gl}(m \cdot \1 + nP)$ are classified by two parameters, $\lambda$ and $\mu$.
    When $m$ is odd, a bilinear form $B_{\lambda, \mu}$ on $\mfrak{gl}(m \cdot \1 + nP)$ is defined by
    \begin{align*}
        B(x, y) &= \mu \Tr(xy), x, y \in \mfrak{sl}(m \cdot \1 + nP) \\
        B(I, I) &= \lambda \\
        B(I, x) &= 0, x \in \mfrak{sl}(m \cdot \1 + nP)
    \end{align*}
    where $I = \sum_{z \in S} z \otimes z^*$ is the identity matrix.
    When $m$ is even, a bilinear form $B_{\lambda, \mu}$ on $\mfrak{gl}(m \cdot \1 + nP)$ is defined by 
    \begin{align*}
        B(x, y) &= \mu \Tr(xy), x, y \in \mfrak{sl}(m \cdot \1 + nP) \\
        B(h_{y_n}, h_{y_n}) &= \lambda \\
        B(h_{y_n}, I) &= \mu \\
        B(h_{y_n}, x) &= 0, x = z \otimes w^*, z \ne w \text{ or } z \otimes z^* + y'_n \otimes {y'_n}^*
    \end{align*}
    where $h_{y_n} := y_n \otimes y_n^*$.
    In both cases, $B$ is symmetric. It is nondegenerate when
    \begin{itemize}
        \item $\mu \ne 0$ if $m$ is even;
        \item $\lambda \ne 0$ if $m$ is odd.
    \end{itemize}
    The Casimir element associated to $B_{\lambda, \mu}$ is 
\begin{equation*}
    \frac{\lambda}{\mu^2}\sum_{i = 1}^m ((x_i \otimes x_i^*)^2 + x_i \otimes x_i^*) + \frac{\lambda + \mu}{\mu^2}\sum_{z \in S} (z \otimes z^*).
\end{equation*}
when $m$ is even and
\begin{equation*}
    \frac{1}{\lambda} \sum_{i = 1}^m ((x_i \otimes x_i^*)^2 + x_i \otimes x_i^*) + \frac{1}{\lambda} \sum_{z \in S} (z \otimes z^*).
\end{equation*}
when $m$ is odd.
\end{theorem}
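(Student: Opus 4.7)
The plan is to identify adjoint-invariant bilinear forms on $\mfrak{g}:=\mfrak{gl}(m \cdot \1 + nP)$ with $\mfrak{g}$-module homomorphisms $\varphi: \mfrak{g} \to \mfrak{g}^*$ (via $\varphi_y(x) = B(x, y)$), and then carry out a basis calculation to pin down $\Hom_{\mfrak{g}}(\mfrak{g}, \mfrak{g}^*)$. The trace form $B_0(x, y) := \Tr(xy)$, defined using the categorical trace in $\Ver_4^+$, is manifestly invariant and supplies the parameter $\mu$. The remaining work is to extract the second parameter $\lambda$ and to explain why its location in $\mfrak{g}$ depends on the parity of $m$.

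To classify invariant forms, I would unfold $B(\ad_z x, y) + B(x, \ad_z y) = 0$ with $z = u \otimes v^*$ a matrix unit acting on $x = a \otimes b^*$, keeping track of the corrections arising from the derivation $d$ on the $P$-summands. This forces $B(z \otimes w^*, u \otimes v^*) = 0$ unless the indices pair up diagonally ($z=v$ and $w=u$), so on the off-diagonal part $B$ is determined up to a single scalar (the trace-form contribution $\mu$). The remaining freedom lives on the diagonal $\{z \otimes z^* : z \in S\}$. When $m$ is odd, the categorical dimension $m$ is invertible, so $\mfrak{g} = \mfrak{sl}(m \cdot \1 + nP) \oplus k \cdot I$ as $\mfrak{g}$-modules with $k \cdot I$ trivial; this gives the free parameter $\lambda = B(I,I)$ with $B(I, \mfrak{sl}) = 0$. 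When $m$ is even, $I \in \mfrak{sl}$, so the second invariant must live elsewhere on the diagonal, and I would locate it on $h_{y_n} := y_n \otimes y_n^*$: the value $B(h_{y_n},I) = \mu$ is then forced by invariance, since running the adjoint relation with a suitable $z = y_n \otimes w^*$ for various $w$ identifies $B(h_{y_n},I)$ with the trace contribution. Finally, one checks symmetry $B(x,y)=B(y,x)$ by direct inspection on the basis.

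For the Casimir, I would invert $\varphi$ and apply it to $\coev: \1 \to \mfrak{g} \otimes \mfrak{g}^*$, then multiply in $U(\mfrak{g})$. Off the diagonal, the $B_{\lambda,\mu}$-dual of $z \otimes w^*$ is $\mu^{-1}(w \otimes z^*)$, so the off-diagonal contribution to $C$ is $\mu^{-1}\sum_{z \ne w}(z \otimes w^*)(w \otimes z^*)$. Reordering these factors via PBW, using $[z \otimes w^*, w \otimes z^*] = z \otimes z^* - w \otimes w^* \pmod{d\text{-corrections}}$, produces the quadratic $(x_i \otimes x_i^*)^2$ together with a linear correction $(x_i \otimes x_i^*)$ that appears because we are in characteristic $2$ (there is no $\frac{1}{2}$ available to symmetrize). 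The diagonal contribution, together with the regrouped linear terms, yields the factor $\sum_{z \in S}(z \otimes z^*) = I$ with coefficient $(\lambda+\mu)/\mu^2$. The main obstacle will be the $m$ even case, where the $B$-dual basis on the two-dimensional diagonal block $\on{span}\{h_{y_n}, I\}$ requires inverting the $2\times 2$ matrix $\begin{pmatrix}\lambda & \mu \\ \mu & 0\end{pmatrix}$, and matching the resulting coefficients to the stated $\lambda/\mu^2$ and $(\lambda+\mu)/\mu^2$; once this inversion is done cleanly, the odd $m$ formula is extracted as the special case where the corresponding block collapses and only $\lambda$ appears in the denominator.
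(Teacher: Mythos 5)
Your plan for the classification step has a genuine gap: the hard part of the theorem is the upper bound, i.e.\ that there are \emph{no} invariant forms beyond the two-parameter family, and you assert rather than prove it. Unfolding $B(\ad_z x,y)+B(x,\ad_z y)=0$ on matrix units does give the vanishing of most pairings, but the claims that all off-diagonal pairings $B(z\otimes w^*, w\otimes z^*)$ are tied to a \emph{single} scalar $\mu$, and that the diagonal block $\on{span}\{z\otimes z^*\}$ carries exactly one further parameter, are precisely what needs an argument uniform in $m,n$ (with the $d$-corrections to the bracket, e.g.\ $[y_i\otimes {y'_i}^*, y_i\otimes {y'_i}^*]=y_i\otimes y_i^*+y'_i\otimes {y'_i}^*$, and with the requirement that $B$ be a morphism in $\Ver_4^+$, i.e.\ $B(x',y)=B(x,y')$, which you only allude to). The paper supplies this via a structural lemma you have no substitute for: $\mfrak{sl}(m\cdot\1+nP)$ is simple for $m$ odd, and $\mfrak{psl}$ is simple for $m$ even, so the restriction of $B$ to $\mfrak{sl}$ is unique up to scalar (Schur), and the extension to $\mfrak{gl}$ adds exactly one parameter ($\1\oplus\mfrak{sl}$ for $m$ odd; for $m$ even, $B(h_{y_n},\mfrak{sl})$ is forced because $\mfrak{sl}=[\mfrak{sl},\mfrak{sl}]$, and only $B(h_{y_n},h_{y_n})$ is free). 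Your observation that $\mfrak{g}=\mfrak{sl}\oplus k\cdot I$ for $m$ odd produces the parameter $\lambda$ but says nothing about uniqueness on $\mfrak{sl}$; for $m$ even you merely ``locate'' the extra parameter on $h_{y_n}$. A matrix-unit computation could in principle replace the simplicity argument, but as written the decisive relations are not exhibited.

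For the Casimir your plan (invert $\varphi$, apply to $\coev$, reorder in $U(\mfrak{g})$, invert the $2\times 2$ Gram block $\begin{pmatrix}\lambda&\mu\\ \mu&0\end{pmatrix}$ on $\on{span}\{h_{y_n},I\}$ when $m$ is even) is the paper's, but your bookkeeping of where the terms come from is off. Reordering the off-diagonal products $\sum_{z\ne w}(z\otimes w^*)(w\otimes z^*)$ produces only terms linear in the diagonal elements, summing to $(m+2n-1)\mu^{-1}\sum_{z\in S}z\otimes z^*$; this vanishes exactly when $m$ is odd, which is why $\mu$ is absent from the odd-$m$ Casimir -- a feature your ``extract the odd case as a degeneration of the even case'' remark would not recover. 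The quadratic terms $(x_i\otimes x_i^*)^2$ do not come from that reordering at all: they come from $I^2$ in the contribution of the diagonal part of the dual basis ($I^*\mapsto\lambda^{-1}I$ for $m$ odd, and the $\lambda\mu^{-2}I^2$ piece for $m$ even), simplified using the fact that $y_i\otimes y_i^*+y'_i\otimes {y'_i}^*$ is idempotent in $U(\mfrak{gl})$.
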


We first show the theorem for $\mfrak{gl}(P)$.
\begin{proposition}
    $\mfrak{gl}(P)$ has a two-dimensional space of invariant bilinear forms.
\end{proposition}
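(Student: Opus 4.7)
The plan is to reduce the statement to computing the $\mfrak{g}$-module endomorphism algebra $\End_{\mfrak{g}}(\mfrak{g})$ for $\mfrak{g} := \mfrak{gl}(P)$ under the adjoint action. First I would show that $B_1(X, Y) := \Tr(XY)$ is a nondegenerate, $\mfrak{g}$-invariant bilinear form that is a morphism in $\Ver_4^+$. The key identities are: (i) $\Tr$ vanishes on $\im d$, since $\ker d$ is spanned by matrices ($E_{21}$ and $I$) of trace $0$ in characteristic $2$; and (ii) $\Tr$ is a Lie character of $\mfrak{g}$, which follows from the bracket formula $[X, Y] = XY + YX + Y'X'$, trace cyclicity, and the fact that the multiplicative closure of $\ker d$ consists of traceless matrices. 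Invariance of $B_1$ then collapses, after cyclicity and $d^2 = 0$, to $\Tr(d(XY \cdot Z')) = 0$, which holds by (i) and $dZ' = 0$. Nondegeneracy is a direct computation in the basis.

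Since $B_1$ is a nondegenerate invariant form, it induces a $\mfrak{g}$-module isomorphism $\mfrak{g} \cong \mfrak{g}^*$, so the space of invariant bilinear forms is identified with $\End_{\mfrak{g}}(\mfrak{g})$. To compute this, I would use the structural data: the center is $Z = k \cdot x'$ (a trivial submodule) and $\Tr$ gives a surjection $\mfrak{g} \twoheadrightarrow \1$; their composite $\psi := \iota_Z \circ \Tr$ (where $\iota_Z(1) = x'$) is a nonzero endomorphism with $\psi(y) = x'$ and $\psi$ vanishing on the other basis vectors. Any $\phi \in \End_{\mfrak{g}}(\mfrak{g})$ sends $Z$ into $Z$, giving $\phi(x') = \lambda x'$; the relation $\phi([y, y']) = [y, \phi(y')]$ combined with $\phi$ commuting with $d$ forces $\phi(y') = \lambda y'$. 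Parametrizing $\phi(x)$ and $\phi(y)$ subject to $d\phi = \phi d$ and enforcing $\phi([x, y]) = [x, \phi(y)]$ and $\phi([y, y]) = 0$ kills all remaining degrees of freedom except one parameter $\mu$, the coefficient of $\psi$. Thus $\End_{\mfrak{g}}(\mfrak{g}) = k \cdot \id + k \cdot \psi$, and the corresponding bilinear forms are $B_1$ and $B_2(X, Y) := \Tr(X)\Tr(Y)$.

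The main subtlety is the extra braiding term $Y'X'$ in the bracket and the corresponding $u' \otimes (z' \cdot v)$ in the $\mfrak{g}$-action on $\mfrak{g} \otimes \mfrak{g}$, which produces the third summand $B(U', [Z', V])$ in the invariance identity; once this is tracked carefully, the rest is routine bookkeeping. A fully hands-on alternative, bypassing the module-theoretic reformulation, is to write $B$ as a matrix of $16$ scalars $B_{uv}$ on basis pairs, impose the $\Ver_4^+$-condition $B(dU, V) = B(U, dV)$ (which zeros out four entries and identifies four pairs, leaving $8$ parameters), and then impose invariance for $Z \in \{x, y\}$ (the $Z \in \{x', y'\}$ cases impose no constraints, by centrality of $x'$ and characteristic-$2$ cancellations). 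This yields exactly two free parameters, corresponding again to $B_1$ and $B_2$.
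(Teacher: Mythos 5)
Your proposal is correct, and your primary route is genuinely different from the paper's. The paper's proof of this proposition is exactly your ``hands-on alternative'': it fixes the basis $I$, $y\otimes y'^{*}$, $y'\otimes y^{*}$, $y\otimes y^{*}$ of $\mfrak{gl}(P)$, writes out the brackets, and checks that adjoint-invariance forces all pairings to vanish except $B(y\otimes y^{*},y\otimes y^{*})=\lambda$ and $B(I,y\otimes y^{*})=B(y\otimes y'^{*},y'\otimes y^{*})=\mu$, giving two parameters. Your main argument instead first establishes that the trace form $B_1(X,Y)=\Tr(XY)$ is a nondegenerate invariant morphism in $\Ver_4^+$ (your reduction of invariance to $\Tr\circ d=0$ via cyclicity and $d^2=0$ checks out, as does the $d$-compatibility $B_1(X',Y)=B_1(X,Y')$), and then identifies invariant forms with $\End_{\mfrak{g}}(\mfrak{g})$ for the adjoint module; the computation that this endomorphism algebra is $k\cdot\on{id}\oplus k\cdot\psi$ with $\psi=\iota_Z\circ\Tr$ is right (I verified that the relations $[x,x]=x'$, $[x,y]=x+y'$, $[x,y']=x'$, $[y,y']=y'$ together with $d$-equivariance leave exactly these two degrees of freedom), and the resulting span $\{\Tr(XY),\,\Tr(X)\Tr(Y)\}$ coincides with the paper's $(\lambda,\mu)$-family, since $\Tr(XY)$ has $(\lambda,\mu)=(1,1)$ and $\Tr(X)\Tr(Y)$ has $(\lambda,\mu)=(1,0)$. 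What your route buys: it is more conceptual, explains the answer (one dimension from $\on{id}$, one from the center-times-trace nilpotent $\psi$), exhibits the forms explicitly, and is closer in spirit to the paper's own treatment of general $\mfrak{gl}(m\cdot\1+nP)$, which also works through $\mfrak{sl}$ and the trace form rather than brute force; what the paper's route buys is brevity in this rank-one case, where the $4\times 4$ Gram matrix computation is short. You also correctly flag the two categorical subtleties (the braiding term $B(U',[Z',V])$ in invariance and the requirement $B(dU,V)=B(U,dV)$) that the paper's displayed invariance formula elides; your parenthetical that the $Z\in\{x',y'\}$ conditions are redundant given $d$-compatibility is likewise accurate. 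The only blemishes are cosmetic: your justification of claim (i) cites $\ker d$ where you mean $\im d\subseteq\ker d$, and pinning $\phi(y')=\lambda y'$ to the same scalar as $\phi(x')=\lambda x'$ uses the relation $[x,y']=x'$ in addition to $[y,y']=y'$, but both are immediate to repair.
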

\begin{proof}
    Let $I := y \otimes y^* + y' \otimes {y'}^*$, $y \otimes {y'}^*$, $y' \otimes y^*$, and $y \otimes y^*$ be a basis for $\mfrak{gl}(P)$. Then writing out all the brackets, we check that $B$ is symmetric and that the only nonzero pairings are
    \begin{itemize}
        \item $B(y \otimes y^*, y \otimes y^*)$
        \item $B(I, y \otimes y^*) = B(y \otimes {y'}^*, y' \otimes y^*)$
    \end{itemize}
    and these are independent of each other. So $B$ is parametrized by 
    \begin{align*}
        \lambda &:= B(y \otimes y^*, y \otimes y^*), \\
        \mu &:= B(I, y \otimes y^*).
    \end{align*}
\end{proof}

The rest of this section is devoted to proving this theorem for other $m, n$.

\begin{definition}
    Let $\mfrak{sl}(X)$ be the kernel of the evaluation map $\mfrak{gl}(X) \to \1$; in other words, the Lie subalgebra of traceless matrices.
\end{definition}
\begin{proposition}
 If $X = m \cdot \1 + nP$ for odd $m$, $\mfrak{gl}(X) = \1 \oplus \mfrak{sl}(X)$, where $\1$ is the central subalgebra that is the image of $\coev: \1 \to X \otimes X^*$. If $m$ is even, $\mfrak{gl}(X)$ is a nontrivial extension of $\1$ by $\mfrak{sl}(X)$.
\end{proposition}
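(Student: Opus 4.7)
The plan is to analyze the short exact sequence $0 \to \mfrak{sl}(X) \to \mfrak{gl}(X) \xrightarrow{\Tr} \1 \to 0$ by means of the coevaluation map $\coev \colon \1 \to X \otimes X^* = \mfrak{gl}(X)$, whose image is the identity $I = \sum_{z \in S} z \otimes z^*$. I would first observe that the composition $\Tr \circ \coev \colon \1 \to \1$ equals the categorical dimension of $X$. Since $\dim(\1) = 1$ and $\dim(P) = y'(x) + y(x') = 2 = 0$ in characteristic $2$ (using $\coev_P(1) = x \otimes y' + x' \otimes y$ from Proposition~\ref{p_koszul}), we have $\dim(X) = m$ in $k$.

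In the odd case $m$ is invertible in $k$, so $\coev$ gives a section of $\Tr$. The element $I$ is central in the Lie algebra $\mfrak{gl}(X)$: since $\coev$ is a morphism out of $\1$ in $\Ver_4^+$, $I$ lies in $\ker d$, so the braiding on $A \otimes I$ acts without correction and $[A, I] = AI - IA = 0$ for every $A$. Equivalently, $\coev$ is a morphism from the trivial $\mfrak{gl}(X)$-module, hence $I$ is $\mfrak{gl}(X)$-invariant for the adjoint action. Thus $\mfrak{gl}(X) = k \cdot I \oplus \mfrak{sl}(X)$ as a direct product of Lie algebras, with $k \cdot I \cong \1$ the central ideal.

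In the even case $\Tr(I) = 0$, so $I \in \mfrak{sl}(X)$ and this particular section fails. To rule out any direct-product decomposition, I would show that the center of $\mfrak{gl}(X)$ is exactly $k \cdot I$ by a Schur-type argument. The key point is that $X$ is simple as a $\mfrak{gl}(X)$-module in $\Ver_4^+$: any $\mfrak{gl}(X)$-stable subobject $W \subset X$ must have its underlying $k$-vector space closed under the action of $\mfrak{gl}(X) \otimes X \to X$, which at the level of vector spaces realizes all of $\End_k(X)$, forcing $W = 0$ or $W = X$. Then rigidity gives $\Hom_{\mfrak{gl}(X)}(\1, \mfrak{gl}(X)) \cong \End_{\Ver_4^+, \mfrak{gl}(X)}(X)$, and any equivariant endomorphism must commute with every $k$-linear operator on $X$, hence is a scalar. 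Therefore every central element of $\mfrak{gl}(X)$ is a multiple of $I$ and has trace $0$, so no central splitting of $\Tr$ can exist and the extension is nontrivial.

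The main obstacle I anticipate is the Schur-type identification. One must carefully distinguish subobjects of $X$ in $\Ver_4^+$ (which must be $d$-stable) from arbitrary $k$-subspaces, and justify that the action morphism $\mfrak{gl}(X) \otimes X \to X$ really does surject onto $\End_k(X)$ as $k$-linear operators, which is what makes both the irreducibility claim and the reduction of equivariant endomorphisms to scalars go through in $\Ver_4^+$. Once this is established, the two cases follow from the dimension computation above.
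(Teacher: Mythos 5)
The paper states this proposition without proof, so there is no argument of the author's to compare against; your route is the natural one. Your odd case is complete and correct: $\Tr(\coev(1))=\dim_k X=m+2n\equiv m\pmod 2$ because each copy of $P$ contributes $2=0$, the element $I$ satisfies $I'=0$ so $[A,I]=AI-IA-A'I'=0$ for all $A$, and $\Tr(I)=m\neq 0$ makes $kI$ a central ideal (and a subobject, being the image of $\coev$) complementing $\mfrak{sl}(X)$.

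In the even case two steps need tightening. First, the Schur/rigidity step computes $\Hom_{\mfrak{gl}(X)}(\1,\mfrak{gl}(X))\cong\End_{\mfrak{gl}(X)}(X)=k\,\id$, i.e.\ it identifies the invariant \emph{morphisms} $\1\to\mfrak{gl}(X)$ -- equivalently the central elements lying in $\ker d$ -- with $kI$. It does not by itself yield your assertion that ``every central element of $\mfrak{gl}(X)$ is a multiple of $I$'': a central element $v$ with $v'\neq 0$ would be invisible to $\Hom(\1,-)$. (The stronger statement is true, but needs a separate argument: a central $v$ commutes with every $A\in\ker d=\End_{k[d]/d^2}(X)$, and for $n\geq 1$ the centralizer of $\End_{k[d]/d^2}(X)$ in $\End_k(X)$ is $k\,\id+k\,d_X$, all of which is killed by $d$; for $n=0$ there is nothing to check.) Second, you should say explicitly why a trivialization of the extension would produce an invariant element of nonzero trace: a splitting in the sense of the proposition is an ideal complement $J\cong\1$ to $\mfrak{sl}(X)$, so $J$ is spanned by the image of a morphism $\1\to\mfrak{gl}(X)$ (hence by a vector killed by $d$), and $J$ is automatically central because the evaluation map is a map of $\mfrak{gl}(X)$-modules, i.e.\ $\Tr[A,B]=\Tr(A'B')=0$, whence $[\mfrak{gl}(X),J]\subseteq\mfrak{sl}(X)\cap J=0$. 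With these two remarks your argument closes: the only invariant vector is $I$ up to scalar, and $\Tr(I)=m+2n=0$ for even $m$, so no central complement exists. Note also that the simplicity of $X$ is not really needed: an equivariant endomorphism commutes with all of $\End_k(X)$ (the underlying operators of $\mfrak{gl}(X)$), hence is already a scalar.
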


We first show that there is only one adjoint-invariant bilinear form up to scalars on $\mfrak{sl}(m \cdot \1 + nP)$, then extend it to $\mfrak{gl}(m \cdot \1 + nP)$.

\begin{proposition}
    If $m$ is odd, $\mfrak{sl}(m \cdot \1 + nP)$ is simple. If $m$ is even, when $m > 0$ or $m = 0, n \ge 2$, $\mfrak{sl}(m \cdot \1 + nP)$ is an extension of $\mfrak{psl}(m \cdot \1 + nP) := \mfrak{sl}(m \cdot \1 + nP)/\1 I_{m+nP}$ by $\1$, and $\mfrak{psl}(m \cdot \1 + nP)$ is simple.
\end{proposition}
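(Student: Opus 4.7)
The plan is to adapt the classical simplicity proofs of $\mfrak{sl}_n$ and $\mfrak{psl}_n$ over fields of characteristic $2$ to the setting of $\Ver_4^+$, where a (categorical) ideal of $\mfrak{sl}(X)$ is required to be stable under the derivation $d$. First I would fix a basis $x_1,\ldots,x_m,y_1,y_1',\ldots,y_n,y_n'$ of $X=m\cdot\1+nP$ with $dy_j=y_j'$, the dual basis of $X^*$ (with $d(y_j'^*)=y_j^*$), and the matrix units $E_{ab}=a\otimes b^*$. Since $\Tr(I)=\dim_k X=m+2n\equiv m\pmod 2$, the identity $I$ lies in $\mfrak{sl}(X)$ iff $m$ is even; and since $I=\coev_X(1)$ it is central in $\mfrak{gl}(X)$, so $\1\cdot I$ is a categorical ideal of $\mfrak{sl}(X)$ in the even case, giving the asserted extension.

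Next I would run the standard ideal-propagation argument. Let $J\subseteq\mfrak{sl}(X)$ be a nonzero $d$-stable ideal and pick $0\neq\xi\in J$. The operadic bracket on $\mfrak{gl}(X)$ is $[f,g]=fg+gf+g'f'$ (characteristic $2$), which on matrix units reduces to the classical commutator $\delta_{bc}E_{ad}+\delta_{ad}E_{cb}$ plus an explicit correction $g'f'$ coming from the braiding $1+d\otimes d$. By bracketing $\xi$ against diagonal matrix units $E_{aa}$ repeatedly and applying $d$ where useful, one isolates in $J$ either a single off-diagonal unit $E_{ab}$ with $a\neq b$, or a purely diagonal element. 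In the first case, the identities $[E_{ab},E_{bc}]=E_{ac}$ for distinct $a,b,c$ and $[E_{ab},E_{ba}]=E_{aa}+E_{bb}$ propagate to fill out all off-diagonal units and all differences of diagonal units, so $J$ contains $\mfrak{sl}(X)$ in the odd case, and $\mfrak{sl}(X)$ modulo $\1\cdot I$ in the even case. In the second case, writing $\xi=\sum_a\lambda_a E_{aa}$ yields $[\xi,E_{ab}]=(\lambda_a+\lambda_b)E_{ab}$, which is nonzero unless all $\lambda_a$ coincide, i.e. unless $\xi\in\1\cdot I$: in the odd case $I\notin\mfrak{sl}(X)$ forces $\xi=0$, a contradiction, so $J=\mfrak{sl}(X)$; in the even case either $J=\1\cdot I$, or we reduce to the first case and $J=\mfrak{sl}(X)$. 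Passing to the quotient in the even case gives simplicity of $\mfrak{psl}(X)$.

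The main obstacle I expect is careful bookkeeping of the braiding correction $g'f'$, which introduces non-classical terms in bracket computations involving matrix units with one or both indices in the $nP$ summand. I would handle this by organizing the matrix units into $d$-stable $2\times 2$ blocks matching the structure of the $P$-summands and verifying that the reduction and propagation steps survive these corrections --- the intuition being that these corrections are ``lower order'' in $d$ and can be absorbed by combining bracket manipulations with applications of $d$. The hypothesis $m>0$ or $n\geq 2$ is used precisely to guarantee that the propagation step has enough distinct indices to reach every matrix unit, excluding only the small case $X=P$.
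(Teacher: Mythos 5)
Your proposal follows essentially the same route as the paper: a direct matrix-unit computation in which one brackets a nonzero element of an ideal against basis elements (tracking the correction terms coming from $d$), isolates a single off-diagonal unit or a non-scalar diagonal element, and then propagates to all of $\mfrak{sl}$ (resp.\ $\mfrak{psl}$ when $m$ is even, where multiples of $I$ are quotiented out). The ``careful bookkeeping'' you defer is precisely the content of the paper's proof, which records the explicit bracket formulas (with the $X_{y'_j,y_k}+X_{y_j,y'_k}$-type corrections) and an explicit iterated bracket such as $[[[X, x_j \otimes y_i^*], y'_i \otimes x_j^*], y_i \otimes {y'_i}^*] = X_{y_i,x_j}\,(y_i \otimes x_j^*)$ to isolate a basis element; your remedy of applying $d$ to pass from a diagonal element with $\xi'\neq 0$ to an off-diagonal one is consistent with that computation.
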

\begin{proof}
    We can show this directly. 
    Let
    \begin{equation*}
        \mfrak{g} = \begin{cases}
            \mfrak{sl}(m \cdot \1 + nP) & m \text{ odd} \\
            \mfrak{psl}(m \cdot \1 + nP) & m \text{ even}.
        \end{cases}
    \end{equation*}
    Suppose $X \in \mfrak{g}$. For $z, w \in S$, let $X_{z,w}$ be the $(z, w)$ matrix coefficient of $X$ with respect to the basis $S$. Then bracketing with an off-diagonal basis element of $\mfrak{g}$ gives us a matrix that is nonzero in at most 2 rows and 1 column:
    \begin{itemize}
        \item $Y := [X, x_i \otimes x_j^*]$ has $Y_{x_i,z} = X_{x_j,z}$, $Y_{z, x_j} = X_{z,x_i}$. $Y_{x_i, x_j} = X_{x_i, x_i} + X_{x_j, x_j}$. All other entries of $Y$ are 0.
        \item $Y := [X, x_i \otimes y_j^*]$ has $Y_{x_i, z} = X_{y_j, z}$, $Y_{z, y_j} = X_{z, x_i}$. Again, where these intersect, we have $Y_{x_i, y_j} = X_{x_i, x_i} + X_{y_j, y_j}$.
        \item $Y := [X, x_i \otimes {y'_j}^*]$ has $Y_{x_i, z} = X_{y'_j, z}$ if $z \ne y_k$ and $Y_{x_i, y_k} = X_{y'_j, y_k} + X_{y_j, y'_k}$, and $Y_{z, y'_j} = X_{z, x_i}$.
        \item $Y := [X, y'_i \otimes x_j^*]$ has $Y_{y'_i,z} = X_{x_j, z}$, $Y_{z, x_j} = X_{z, y'_i}$.
        \item $Y:= [X, y_i \otimes x_j^*]$ has $Y_{y_i, z} = X_{x_j, z}$, $Y_{z, x_j} = X_{z, y_i}$, and $Y_{y_i', y_k} = X_{x_j, y'_k}$
        \item $Y := [X, y_i \otimes y_j^*]$ has $Y_{y_i, z} = X_{y_j, z}$, $Y_{z, y_j} = X_{z, y_i}$, and $Y_{y'_i, y_k} = X_{y_j, y'_k}$. 
        \item $Y := [X, y'_i \otimes y_j^*]$ has $Y_{y'_i, z} = X_{y_j, z}$ and $Y_{z, y_j} = X_{z, y'_i}$.
        \item $Y := [X, y'_i \otimes {y'_j}^*]$ has $Y_{y'_i, z} = X_{y'_j, z}$ when $z \ne y_k$ and $Y_{y'_i, y_k} = X_{y'_j, y_k} + x_{y_j, y'_k}$; and $Y_{z, y'_j} = X_{z, y'_i}$.
        \item $Y := [X, y_i \otimes {y'_j}^*]$ has $Y_{y_i, z} = X_{y'_j, z}$ when $z \ne y_k$ and $Y_{y_i, y_k} = X_{y'_j, y_k} + X_{y_j, y'_k}$, $Y_{z, y'_j} = X_{z, y_i}$, $Y_{y'_i, y'_k} = X_{y_j, y'_k}$, $Y_{y'_i, y_k} = X_{y_j y_k} + X_{y'_j, y'_k}$.
    \end{itemize}
    Suppose that $X \in J$ some nontrivial ideal of $\mfrak{g}$.
    If $X$ has a nonzero off-diagonal entry, we can iteratively bracket with basis elements when $m, n \ge 1$ until we get a multiple of a basis element. For example, suppose that $X_{y_i \otimes x_j^*} \ne 0$. Then 
    \begin{equation*}
       [[[X, x_j \otimes y_i^*], y'_i \otimes x_j^*], y_i \otimes {y'_i}^*] = X_{y_i, x_j}(y_i \otimes x_j^*).
    \end{equation*}
    Hence $y_i \otimes x_j^* \in J$, and further brackets with other basis elements produce the whole basis of $\mfrak{g}$, so $J = \mfrak{g}$. The argument for the other off-diagonal elements is analogous.
    
    If $X$ is diagonal and has two distinct elements on the diagonal, e.g. $X_{z, z}$ and $X_{w, w}$, then
    \begin{equation*}
        [X, z \otimes w^*] = (X_{z, z} - X_{w, w})(z \otimes w^*),
    \end{equation*}
    and then we can again take brackets with other basis elements to get that $J = \mfrak{g}$. When $m$ is odd, no traceless diagonal matrix can be a multiple of the identity, so this covers all cases for $X$. When $m$ is even, since we have quotiented out by multiples of the identity, this covers all cases.

    Therefore, $\mfrak{g}$ is simple when $m, n \ge 1$.
\end{proof}
Recall that if a Lie algebra $\mfrak{g}$ is simple, then there is only one adjoint-invariant bilinear form up to scalars. 
\begin{corollary}
    Therefore, when $m$ is odd, $\mfrak{sl}(m \cdot \1 + nP)$ has exactly one adjoint-invariant bilinear form up to scalars.
\end{corollary}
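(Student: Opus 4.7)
The plan is to deduce this corollary immediately from the simplicity statement of the previous proposition together with a categorical Schur's lemma argument applied to adjoint-invariant bilinear forms. The content is essentially a one-liner once the necessary generalities are in place, so the work is in verifying that these generalities transfer to the setting of $\Ver_4^+$.

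First, I would translate the problem into a statement about module maps. An adjoint-invariant bilinear form $B: \mfrak{g} \otimes \mfrak{g} \to \1$ corresponds by adjunction to a $\mfrak{g}$-module morphism $\varphi_B: \mfrak{g} \to \mfrak{g}^*$ in $\Ver_4^+$, where $\mfrak{g}^*$ carries the coadjoint action. Any nonzero $\mfrak{g}$-subobject of $\mfrak{g}$ under the adjoint action is by definition a nonzero ideal of $\mfrak{g}$, so the simplicity of $\mfrak{g} = \mfrak{sl}(m \cdot \1 + nP)$ as a Lie algebra (from the previous proposition, using that $m$ is odd) implies that $\mfrak{g}$ is simple as a module over itself.

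Next, I would invoke Schur's lemma in $\Ver_4^+$: since $k$ is algebraically closed and $\mfrak{g}$ is a simple $\mfrak{g}$-module, $\End_{\mfrak{g}}(\mfrak{g}) = k$, and consequently $\Hom_{\mfrak{g}}(\mfrak{g}, \mfrak{g}^*)$ is either $0$ or one-dimensional over $k$ (depending on whether $\mfrak{g} \cong \mfrak{g}^*$ as modules). To rule out $0$, I would exhibit the trace form $B(x, y) := \Tr(xy)$ restricted from $\mfrak{gl}(m \cdot \1 + nP)$ to $\mfrak{sl}(m \cdot \1 + nP)$, which is adjoint-invariant by cyclicity of the trace and is nonzero on $\mfrak{sl}$, as witnessed by the pairing of any off-diagonal elementary matrix $z \otimes w^*$ with its transpose $w \otimes z^*$.

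The main subtlety is checking that Schur's lemma does go through for algebra objects in $\Ver_4^+$ over the algebraically closed field $k$: one must confirm that the endomorphism ring of a simple $\mfrak{g}$-module is a finite-dimensional division algebra over $k$, which then forces it to equal $k$. Once this categorical Schur statement is in hand, the rest of the proof is a direct application of the preceding proposition and the existence of the trace form.
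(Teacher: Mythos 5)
Your proposal is correct and is essentially the paper's own argument: the paper deduces the corollary from the simplicity proposition together with the standard fact that a simple Lie algebra admits at most a one-dimensional space of invariant forms, which is exactly your Schur's-lemma argument via $\Hom_{\mfrak{g}}(\mfrak{g},\mfrak{g}^*)$. Your additional care (checking Schur's lemma in $\Ver_4^+$ and exhibiting the trace form to get existence, hence ``exactly one'' rather than ``at most one'') just makes explicit what the paper leaves implicit and uses later in the theorem.
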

\begin{remark}
Note that the conditions for $\mfrak{sl}(m \cdot \1 + nP)$ to be simple are different from when the Lie superalgebra $\mfrak{sl}(m \cdot \1 + n | n)$ is simple: $\mfrak{sl}(m \cdot \1 + n | n)$ is simple when $m, n \ge 1$, and when $m = 0$, $\mfrak{psl}(n | n) := \mfrak{sl}(n | n)/kI$ is simple.
\end{remark}

When $m$ is even, we have only deduced that $\mfrak{psl}(m \cdot \1 + nP)$ has one adjoint-invariant bilinear form, but we can extend it to $\mfrak{sl}(m \cdot \1 + nP)$ uniquely.
\begin{corollary}
\label{sl_m_even_form}
    When $m$ is even, $\mfrak{sl}(m \cdot \1 + nP)$ has exactly one adjoint-invariant bilinear form up to scalars.
\end{corollary}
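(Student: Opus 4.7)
The plan is to reduce to the simple Lie algebra $\mfrak{psl}(m \cdot \1 + nP)$ established in the previous proposition, then invoke Schur's lemma. The crux is showing that $[\mfrak{sl}(m \cdot \1 + nP), \mfrak{sl}(m \cdot \1 + nP)] = \mfrak{sl}(m \cdot \1 + nP)$; this forces any invariant bilinear form to vanish on $I$ and descend to $\mfrak{psl}$, where uniqueness is already known.

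Let $B$ be an adjoint-invariant bilinear form on $\mfrak{sl} := \mfrak{sl}(m \cdot \1 + nP)$. Since $I$ is central, substituting $Y = I$ into $B([X, Y], Z) + B(Y, [X, Z]) = 0$ yields $B(I, [X, Z]) = 0$, and symmetrically $B([X, Y], I) = 0$; hence both $B(I, -)$ and $B(-, I)$ vanish on $[\mfrak{sl}, \mfrak{sl}]$. To conclude $[\mfrak{sl}, \mfrak{sl}] = \mfrak{sl}$, note that simplicity of $\mfrak{psl}$ gives $[\mfrak{sl}, \mfrak{sl}] + k I = \mfrak{sl}$, so it remains to show $I \in [\mfrak{sl}, \mfrak{sl}]$. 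Writing
\begin{equation*}
I = \sum_{i=1}^{m} x_i \otimes x_i^* + \sum_{j=1}^{n} (y_j \otimes y_j^* + y'_j \otimes {y'_j}^*),
\end{equation*}
the $\1$-block summand equals $\sum_{i=1}^{m/2} [x_{2i-1} \otimes x_{2i}^*, x_{2i} \otimes x_{2i-1}^*]$ (grouping into pairs using that $m$ is even), and each $P$-block summand is $[y_j \otimes {y'_j}^*, y_j \otimes {y'_j}^*]$; this is the identity $[y, y] = y'$ already recorded for $\mfrak{sl}(P)$ in the remark following Proposition \ref{1p_lie}, which in turn follows from the bracket formula in $\Ver_4^+$ using $c(A \otimes A) = A \otimes A + (dA) \otimes (dA)$ together with $d(y_j \otimes {y'_j}^*) = y_j \otimes y_j^* + y'_j \otimes {y'_j}^*$.

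Hence $B(I, -) = B(-, I) = 0$ on $\mfrak{sl}$, so $B$ descends to an invariant form $\tilde{B}$ on $\mfrak{psl}$. Schur's lemma applied to the simple module $\mfrak{psl}$ ensures the space of invariant bilinear forms on $\mfrak{psl}$ is at most one-dimensional; since the trace form $(X, Y) \mapsto \Tr(XY)$ on $\mfrak{sl}$ descends (as $\Tr(Y) = 0$ for $Y \in \mfrak{sl}$) to a nonzero invariant form on $\mfrak{psl}$, this space has dimension exactly one. Therefore $B$ is a scalar multiple of the pullback of this trace form, proving uniqueness up to scalars.

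The main obstacle is the verification that $I \in [\mfrak{sl}, \mfrak{sl}]$, particularly the bracket computation on each $P$-summand, since it relies on the braiding of $\Ver_4^+$ rather than the naive commutator; however, this reduces to the already-noted $[y, y] = y'$ identity for $\mfrak{sl}(P)$.
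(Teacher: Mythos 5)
Your proof is correct and follows essentially the same route as the paper: show $I$ lies in $[\mfrak{sl},\mfrak{sl}]$ by an explicit bracket computation (the paper exhibits $I$ as a single bracket, you as a sum of brackets including $[y_j\otimes{y'_j}^*,y_j\otimes{y'_j}^*]=y_j\otimes y_j^*+y'_j\otimes{y'_j}^*$), deduce from invariance that $B(I,-)=B(-,I)=0$, and descend to the simple quotient $\mfrak{psl}$ where uniqueness up to scalars is already known. The only addition is your explicit descent of the trace form to certify existence, which is consistent with what the paper asserts elsewhere.
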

\begin{proof}
Let $B$ be an adjoint-invariant bilinear form on $\mfrak{sl}(m \cdot \1 + nP)$.
Notice that $I = \sum_{z \in S} z \otimes z^*$ (corresponding to the identity matrix) is central. Because
\begin{equation*}
    \left[\sum_{i = 1}^{m/2} x_{2i - 1} \otimes x_{2i}^* + \sum_{i = 1}^{n} y_i \otimes {y'_i}^*, \sum_{i = 1}^{m/2} x_{2i} \otimes x_{2i - 1}^* + \sum_{i = 1}^{n} y'_i \otimes y_i^*\right] = I
\end{equation*}
and $\mfrak{psl}$ is simple, $[\mfrak{sl}, \mfrak{sl}] = \mfrak{sl}$. Hence, $B(I, g) = B(g, I) = 0$ for all $g \in \mfrak{sl}(m \cdot \1 + nP)$.
Then $B$ is determined by its values on the quotient $\mfrak{psl}(m \cdot \1 + nP)$, which we know has a unique adjoint-invariant bilinear form up to scalars.
\end{proof}

\begin{proposition}
    $\mfrak{gl}(m \cdot \1 + nP)$ has a two-dimensional space of invariant bilinear forms.
\end{proposition}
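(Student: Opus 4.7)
The plan is to bootstrap from the classification of invariant bilinear forms on $\mathfrak{sl}(m \cdot \1 + nP)$ (obtained in the previous corollary) by studying how such a form extends to the full $\mathfrak{gl}$. Let $B$ be any adjoint-invariant bilinear form on $\mathfrak{gl}(m \cdot \1 + nP)$. Its restriction to the ideal $\mathfrak{sl}(m \cdot \1 + nP)$ is automatically invariant, so by Corollary \ref{sl_m_even_form} (and its odd-$m$ analog stated just before it) we have $B(x,y) = \mu \Tr(xy)$ for a unique $\mu \in k$. This accounts for one parameter; the work is to show that $B$ is determined by $\mu$ together with one additional scalar, and that this scalar is in fact free.

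I would split into the two parities. When $m$ is odd, $\Tr(I)=m\ne 0$ in characteristic $2$, so $I\notin\mathfrak{sl}$ and $\mathfrak{gl} = kI\oplus\mathfrak{sl}$ as Lie algebras with $I$ central. Invariance then yields $B(I,[z,y]) = B([z,I],y) = 0$ for all $z,y\in\mathfrak{gl}$. Combined with the identity $[\mathfrak{gl},\mathfrak{gl}] = [\mathfrak{sl},\mathfrak{sl}] = \mathfrak{sl}$ (which follows from the explicit bracket computation used in the simplicity proof above), this forces $B(I,\mathfrak{sl})=0$, so the only remaining freedom is $\lambda := B(I,I)$. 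When $m$ is even, $I\in\mathfrak{sl}$ and we instead pick $h = h_{y_n} = y_n\otimes y_n^\ast$ (using $h = x_1\otimes x_1^\ast$ if $n=0$), which has trace $1$ and hence spans a vector-space complement to $\mathfrak{sl}$. For any $w\in\mathfrak{sl}=[\mathfrak{gl},\mathfrak{gl}]$, writing $w = [z,y]$ and applying invariance gives
\begin{equation*}
B(h, w) = B([z,h], y) = \mu\,\Tr([z,h]\, y),
\end{equation*}
which is well-defined because $[z,h]\in\mathfrak{sl}$ regardless of $z$. Thus $B(h,\cdot)|_{\mathfrak{sl}}$ is pinned down by $\mu$, and the one remaining scalar is $\lambda := B(h,h)$.

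This shows the dimension is at most two. For the matching lower bound one must exhibit, for each pair $(\lambda,\mu)$, an actual invariant bilinear form; I would simply take the explicit formulas given in the theorem statement and verify invariance and symmetry by a direct check against a basis of $\mathfrak{gl}$. The main obstacle is the well-definedness in the even case: the assignment $B(h,[z,y]) := \mu\,\Tr([z,h]y)$ must be shown to depend only on $[z,y]$ and not on the particular decomposition as a bracket. Two ways to handle this: either argue abstractly that, since any two writings differ by an element of $\ker(\mathfrak{gl}\otimes\mathfrak{gl}\xrightarrow{[\cdot,\cdot]}\mathfrak{sl})$, invariance of the trace form on $\mathfrak{sl}$ implies the difference vanishes; or, more concretely, evaluate the right-hand side on the basis $\{z\otimes w^\ast\}$ used in the previous proof and check directly that the resulting form satisfies $B([u,v],w) = B(v,[u,w])$ case by case. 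Either route reduces to already-established properties of the trace form and the explicit structure constants of $\mathfrak{gl}(m\cdot\1+nP)$.
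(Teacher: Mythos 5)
Your argument is essentially the paper's own proof: restrict $B$ to $\mathfrak{sl}(m\cdot\1+nP)$ to get the parameter $\mu$ via the uniqueness result, use invariance together with $[\mathfrak{gl},\mathfrak{gl}]=\mathfrak{sl}$ to see that the cross-pairings with the complement ($I$ for $m$ odd, $h_{y_n}$ for $m$ even) are forced, and observe that the value of $B$ on the complementary generator paired with itself is the second free parameter. Your additional attention to the existence direction (checking the explicit $B_{\lambda,\mu}$ is invariant) is a reasonable refinement of the same route, which the paper handles by noting the complement generator is not a bracket and by writing down $B_{\lambda,\mu}$ explicitly in the theorem.
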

\begin{proof}
    If $m$ is odd, $\mfrak{gl}(m \cdot \1 + nP) = \1 \oplus \mfrak{sl}(m \cdot \1 + nP)$, so picking an invariant bilinear form $B$ is equivalent to defining it on $\1$ and on $\mfrak{sl}(m \cdot \1 + nP)$.

    If $m$ is even, $\mfrak{gl}(m \cdot \1 + nP)$ is an extension of $\1$ by $\mfrak{sl}(m \cdot \1 + nP)$; let $w \in \mfrak{gl}(m \cdot \1 + nP)$ be a lift of the generator of $\1$. $\mfrak{sl}(m \cdot \1 + nP)$ is an ideal, so for $g \in \mfrak{sl}(m \cdot \1 + nP)$, $[w, g] \in \mfrak{sl}(m \cdot \1 + nP)$. Let $B$ be an invariant bilinear form on $\mfrak{gl}$; $B$ is determined up to scalars on $\mfrak{sl}$. For $g \in \mfrak{sl}$, invariance of $B$ implies that $B(w, g) = B(w, [g_1, g_2]) = B([g_1, w], g_2)$, so $B(w, \mfrak{sl})$ is fixed by values of $B$ on $\mfrak{sl}$. On the other hand, $B(w, w)$ can be chosen arbitrarily because there do not exist $g_1, g_2 \in \mfrak{gl}$ such that $[g_1, g_2] = w$. So $B$ is determined by two parameters.
\end{proof}

\begin{proposition}
    If $m$ is odd, the Casimir element of $\mfrak{gl}(m \cdot \1 + nP)$ is
    \begin{equation*}
    \frac{1}{\lambda} \sum_{i = 1}^m ((x_i \otimes x_i^*)^2 - x_i \otimes x_i^*) + \frac{1}{\lambda} \sum_{z \in S} z \otimes z^*.
    \end{equation*}
\end{proposition}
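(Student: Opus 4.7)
The approach is to compute the Casimir directly from a dual basis for $B_{\lambda, \mu}$ and then simplify in $U(\mfrak{g})$ using the braided PBW relations of $\Ver_4^+$. In the basis $\{E_{st} := s \otimes t^* : s, t \in S\}$ of $\mfrak{g} = \mfrak{gl}(X)$, one checks $B(E_{st}, E_{uv}) = \mu\,\delta_{tu}\delta_{sv}$ on off-diagonal pairs and $B(E_{ss}, E_{tt}) = \mu(\delta_{st}+1)+\lambda$ on the diagonal, with no cross pairing. The key arithmetic input is that $|S| = m+2n$ is odd since $m$ is odd, so $|S| = 1$ in characteristic $2$, and the diagonal block $\mu I + (\lambda + \mu) J$ is invertible with inverse $\frac{1}{\mu} I + \frac{\lambda+\mu}{\mu\lambda} J$. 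Assembling the dual basis and multiplying in $U(\mfrak{g})$ gives
\begin{equation*}
C \;=\; \tfrac{1}{\mu}\sum_{s,t \in S} E_{st}E_{ts} \;+\; \tfrac{\lambda+\mu}{\mu\lambda}\,I^{2}.
\end{equation*}

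The next step is to reduce the sum using the PBW relation $ab - ba = [a,b] + (db)(da)$ coming from the braiding $c(a \otimes b) = b \otimes a + db \otimes da$. A short case check through the types $s, t \in S_0 \cup S_Y \cup S_{Y'}$ shows $[E_{st}, E_{ts}] = E_{ss} - E_{tt}$ for all $s \ne t$, since the correction $(dE_{ts}) \circ (dE_{st})$ always vanishes in $\End(X)$ (its only possibly-nonzero contribution passes through $y_j^*(y'_j) = 0$). Pairing $(s,t)$ with $(t,s)$ yields $\sum_{s \ne t} E_{st}E_{ts} = (|S|-1)I + \sum_{s<t}(dE_{ts})(dE_{st})$ in $U(\mfrak{g})$; since $|S|-1$ is even the first term vanishes, and a parallel expansion of $I^{2} = \sum_s E_{ss}^2 + \sum_{s<t}(dE_{tt})(dE_{ss})$ produces the matching braiding corrections. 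Using the identity $\frac{1}{\mu} + \frac{\lambda+\mu}{\mu\lambda} = \frac{1}{\lambda}$ (valid in characteristic $2$ because $|S|$ is odd), the $\mu$-dependent pieces should cancel — exactly as in the classical $\mfrak{gl}_m$ computation in characteristic $2$, where for $m$ odd the Casimir reduces to $\frac{1}{\lambda}\sum_i E_{ii}^2$ — leaving a combination of central elements that reassembles to the claimed formula.

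Centrality of the right-hand side is clear: each $E_{x_i x_i}$ is idempotent in $\End(X)$ and lies in $\ker d$, so $(E_{x_i x_i})^{2} - E_{x_i x_i}$ belongs to the ``$4$-center'' of Section~\ref{pcenter}, and $I$ is central as it corresponds to the identity endomorphism of $X$. The main obstacle is the careful bookkeeping of the $U(\mfrak{g})$-products $(dE_{ts})(dE_{st})$ and $(dE_{tt})(dE_{ss})$ arising from basis elements $E_{y_j y_j}, E_{y'_j y'_j}, E_{y_j y'_k}$ and $E_{y'_j y_k}$ that are not in $\ker d$: each individual composition is zero in $\End(X)$ but the formal $U$-products are not, and one must show that they collapse, together with $\sum_s E_{ss}^2$, to produce exactly the asymmetric combination $\sum_i (E_{x_i x_i})^2 + \sum_j (E_{y_j y_j} + E_{y'_j y'_j})$ found in the claim. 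The asymmetry in the formula — squares for the $x_i$-diagonals but only linear terms for the $P$-diagonals — reflects that only the $E_{x_i x_i}$ are simultaneously idempotent and in $\ker d$, hence furnish genuine $4$-center squares.
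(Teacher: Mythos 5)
Your setup is sound and, after unwinding, equivalent to the paper's: the Gram matrix of $B_{\lambda,\mu}$ in the basis $E_{st}$, its inverse (using that $|S|=m+2n$ is odd), the resulting expression $C=\tfrac1\mu\sum_{s,t}E_{st}E_{ts}+\tfrac{\lambda+\mu}{\mu\lambda}I^2$, and the vanishing of $(|S|-1)I$ in characteristic $2$ all check out; the paper reaches the same intermediate expression faster by working in the basis $e_{zw}$, $h_z$, $I$ adapted to $\mfrak{gl}=\1\oplus\mfrak{sl}$ (valid since $m$ is odd), which makes the form block diagonal and isolates $\tfrac1\lambda I^2$ immediately. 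However, your proof stops exactly where the content lies: the step you defer (``the $\mu$-dependent pieces should cancel,'' ``one must show that they collapse'') is the heart of the computation, and the mechanism you indicate for it is not right. The braiding corrections from the off-diagonal sum are the $U$-products $(y'_j\otimes y_i^*)(y'_i\otimes y_j^*)$ with $i\ne j$, while those from the expansion of $I^2$ are products of $dE_{y_jy_j}=dE_{y'_jy'_j}=y'_j\otimes y_j^*$; these are different elements of $U(\mfrak{g})$ and do not ``match'' or cancel against one another. What actually happens is that each family dies on its own: the pairs $\{y_i,y_j\}$ and $\{y'_i,y'_j\}$ contribute the identical correction $(y'_j\otimes y_i^*)(y'_i\otimes y_j^*)$, so the off-diagonal corrections cancel in pairs mod $2$, while the diagonal corrections vanish because $(y'_j\otimes y_j^*)^2=0$ in $U(\mfrak{gl})$ --- an instance of the defining relation $uv-vu-(dv)(du)=[u,v]$ with $u=v=y_j\otimes y_j^*$, whose self-bracket is $0$.

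More importantly, the identity that actually produces the asymmetric final answer is never stated. Writing $h_j=y_j\otimes y_j^*+y'_j\otimes{y'_j}^*$, one has $h_j=d(y_j\otimes{y'_j}^*)$ and $[\,y_j\otimes{y'_j}^*,\,y_j\otimes{y'_j}^*\,]=h_j$ (the composition correction vanishes), so the same defining relation with $u=v=y_j\otimes{y'_j}^*$ forces $h_j^2=h_j$ in $U(\mfrak{gl})$; this degree-dropping relation is the paper's key observation. Since all diagonal summands of $I$ lie in $\ker d$ and commute in $U$, it yields $I^2=\sum_i(x_i\otimes x_i^*)^2+\sum_j h_j$, and only then does $\tfrac1\lambda I^2$ become $\tfrac1\lambda\sum_i\bigl((x_i\otimes x_i^*)^2+x_i\otimes x_i^*\bigr)+\tfrac1\lambda\sum_{z\in S}z\otimes z^*$. (The companion relation $(y'_j\otimes y_j^*)^2=0$ is also exactly what makes your $\sum_s E_{ss}^2$ equal to $I^2$, after which your coefficient identity $\tfrac1\mu+\tfrac{\lambda+\mu}{\mu\lambda}=\tfrac1\lambda$ finishes the reduction.) With these two in-$U$ relations supplied, your argument closes and reproduces the paper's result; without them it does not reach the claimed formula.
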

\begin{proof}
    Let $B$ be an invariant bilinear form on $\mfrak{gl}(m \cdot \1 + nP) = \1 \oplus \mfrak{sl}(m \cdot \1 + nP)$. $B$ restricted to $\mfrak{sl}(m \cdot \1 + nP)$ is a multiple of the trace form, say $B(x, y) = \mu \Tr(xy)$, and $B(I, I) = \lambda$ for some other parameter $\lambda$. Let a basis for $\mfrak{sl}$ be $e_{zw} = z \otimes w^*, z \ne w$ and $h_z = z \otimes z^* + y'_n \otimes {y'_n}^*, z \ne y'_n$. Let $I = \sum z \otimes z^*$ be the generator of $\1$. Then the isomorphism $\mfrak{g} \cong \mfrak{g}^*$ induced by $B$ is defined by $I \mapsto \lambda I^*$, $e_{zw} \mapsto \mu e_{wz}^*$, and $h_z \mapsto \mu \sum_{w \ne z} h_w^*$. Its inverse map $\mfrak{g}^* \cong \mfrak{g}$ is defined by
    \begin{align*}
        I^* &\mapsto \frac{1}{\lambda} I \\
        e_{zw}^* &\mapsto \frac{1}{\mu} e_{wz} \\
        h_z^* &\mapsto \frac{1}{\mu} \sum_{w \ne z} h_w^*.
    \end{align*}
    Hence the Casimir element is
    \begin{equation*}
        \frac{1}{\mu} \left(\sum_{z, w \in S, z \ne w} e_{zw} e_{wz} + \sum_{z \in S} h_z \sum_{z \ne w} h_w\right) + \frac{1}{\lambda} I^2
    \end{equation*}
    and $[e_{zw}, e_{wz}] = z \otimes z^* + w \otimes w^*$ while $[h_z, h_w] = 0$, so this simplifes to
    \begin{equation*}
        \frac{1}{\mu}\left((m + 2n - 1)\sum_{z \in S} (z \otimes z^*)\right) + \frac{1}{\lambda} I^2.
    \end{equation*}
    Because $m$ is odd, $m + 2n - 1$ is even, so the first summand vanishes. $I^2$ further simplifies because $[y_i \otimes {y'_i}^*, y_i \otimes {y'_i}^*] = y_i \otimes y_i^* + y'_i \otimes {y'_i}^*$, so in $U(\mfrak{gl})$, $y_i \otimes y_i^* + y'_i \otimes {y'_i}^*$ is idempotent, giving us the expression in the proposition.
\end{proof}

\begin{proposition}
    If $m$ is even, the Casimir element of $\mfrak{gl}(m \cdot \1 + nP)$ is
    \begin{equation*}
        \frac{\lambda}{\mu^2}\left(\sum_{i = 1}^m (x_i \otimes x_i^*)^2 + x_i \otimes x_i^*\right) + \frac{\lambda + \mu}{\mu^2}\left(\sum_{z \in S} z \otimes z^*\right).
    \end{equation*}
\end{proposition}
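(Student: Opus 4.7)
The plan is to compute the Casimir for $B_{\lambda, \mu}$ directly by choosing a basis of $\mfrak{gl}(m \cdot \1 + nP)$, inverting the matrix of $B$, and simplifying the resulting sum in $U(\mfrak{gl}(m \cdot \1 + nP))$. I would split the computation into off-diagonal and diagonal contributions, handle each separately, and then combine.

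For the off-diagonal part, the pairing satisfies $B(e_{zw}, e_{wz}) = \mu$ for $z \ne w$ with all other off-diagonal entries zero, so the dual basis is $e_{zw}^\vee = \mu^{-1} e_{wz}$. In $U(\mfrak{gl})$ one has $e_{zw} e_{wz} = z \otimes z^*$, so the off-diagonal contribution to the Casimir is $\mu^{-1}(m+2n-1)\sum_{z \in S} z \otimes z^*$. Since $m$ is even and $\on{char} k = 2$, the integer $m+2n-1$ is odd, and this simplifies to $\mu^{-1} \sum_z z \otimes z^* = \mu^{-1} I$.

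For the diagonal part, I would take a basis of the $(m+2n)$-dimensional diagonal subspace consisting of $h_{y_n} = y_n \otimes y_n^*$ together with a basis of the diagonal of $\mfrak{sl}(m \cdot \1 + nP)$ consisting of ``balanced'' traceless elements of the form $z \otimes z^* + w \otimes w^*$. The values of $B$ in this basis combine $\mu\Tr$ on the $\mfrak{sl}$-diagonal with the specific pairings $B(h_{y_n}, h_{y_n}) = \lambda$ and $B(h_{y_n}, I) = \mu$, using invariance and the vanishing conditions from the theorem to pin down $B(h_{y_n}, \cdot)$ on the remaining basis elements. Inverting by Schur-complement decomposition, treating $h_{y_n}$ as the direction restoring nondegeneracy to $B|_{\mfrak{sl}}$ (which has $I$ in its radical, since $I \in \mfrak{sl}$ and $\Tr(I g) = \Tr(g) = 0$ for $g \in \mfrak{sl}$), the diagonal contribution to the Casimir should come out to $\lambda \mu^{-2} I^2$: because $I$ is central in $U(\mfrak{gl})$, any polynomial in $I$ is automatically central, and the Schur-complement structure forces exactly a constant multiple of $I^2$ beyond what the off-diagonal contributes.

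Assembling the two contributions gives $\Omega = \lambda \mu^{-2} I^2 + \mu^{-1} I$. To recover the stated expansion, I would use the key identity inherited from the odd-$m$ analysis: $I_P^{(i)} = y_i \otimes y_i^* + y'_i \otimes {y'_i}^*$ is idempotent in $U(\mfrak{gl})$, since $(da)^2 = [a,a]$ in $U$ applied to $a = y_i \otimes {y'_i}^*$ gives $(I_P^{(i)})^2 = I_P^{(i)}$ (using $da = I_P^{(i)}$ and $[a,a] = I_P^{(i)}$ in $\mfrak{gl}$). Since all diagonal elements of $\mfrak{gl}$ commute in $U(\mfrak{gl})$ (their brackets and braiding corrections vanish), in characteristic 2 one has $I^2 = \sum_z (z \otimes z^*)^2$, and using idempotency to eliminate the $P$-summand squares yields
\begin{equation*}
    I^2 = \sum_{i=1}^m ((x_i \otimes x_i^*)^2 + x_i \otimes x_i^*) + \sum_{z \in S} z \otimes z^*.
\end{equation*}
Substituting into $\Omega = \lambda \mu^{-2} I^2 + \mu^{-1} I$ and collecting coefficients (with $2\lambda = 0$) yields exactly the stated formula.

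The main obstacle is the Schur-complement inversion on the diagonal and the bookkeeping to arrive at the clean expression $\lambda \mu^{-2} I^2$ for the diagonal contribution. As a sanity check, specializing to $\mfrak{gl}(P)$ (i.e., $m = 0$, $n = 1$, already treated at the start of the section), $I = I_P^{(1)}$ is itself idempotent so $I^2 = I$, and the formula collapses to $(\lambda + \mu)/\mu^2 \cdot I$, matching the earlier $\mfrak{gl}(P)$ computation.
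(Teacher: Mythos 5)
Your route is essentially the paper's: take the two-parameter invariant form $B_{\lambda,\mu}$, invert it on a basis split into off-diagonal and diagonal parts, arrive at $\lambda\mu^{-2}I^2+\mu^{-1}(m+2n-1)I$, and then expand $I^2$ using the idempotency of $y_i\otimes y_i^*+y'_i\otimes{y'_i}^*$ in $U(\mfrak{gl})$; your derivation of that idempotency from $(du)^2=[u,u]$ with $u=y_i\otimes{y'_i}^*$ is exactly the paper's observation, and the final assembly (using that $m+2n-1$ is odd) is correct, as is the $\mfrak{gl}(P)$ sanity check.

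However, two intermediate claims are false as stated, and one step is asserted rather than derived; all are repairable, and the repairs are what the paper's computation actually does. (i) $e_{zw}e_{wz}=z\otimes z^*$ is not an identity in $U(\mfrak{gl})$: products of elements of $\mfrak{g}$ in $U$ are genuine degree-two elements, not matrix products. What is true is a statement about the full sum: by the defining relation $uv=vu+(dv)(du)+[u,v]$, each unordered pair $\{z,w\}$ contributes $[e_{zw},e_{wz}]+(de_{wz})(de_{zw})=z\otimes z^*+w\otimes w^*+(de_{wz})(de_{zw})$, and one must check that the braiding corrections cancel (the term from the pair $\{y_i,y_j\}$ is identical to, and cancels against, the one from $\{y'_i,y'_j\}$, and all other corrections vanish); only then does the off-diagonal part collapse to $\mu^{-1}(m+2n-1)I$. (ii) It is likewise not true that all diagonal elements commute in $U$: for $i\ne j$ one has $(y_i\otimes y_i^*)(y_j\otimes y_j^*)+(y_j\otimes y_j^*)(y_i\otimes y_i^*)=(y'_j\otimes y_j^*)(y'_i\otimes y_i^*)\ne 0$, so $I^2=\sum_z(z\otimes z^*)^2$ requires these cross terms to cancel (they do, four at a time); cleaner, and in line with the paper, is to group the diagonal into the block identities $e_j:=y_j\otimes y_j^*+y'_j\otimes{y'_j}^*$, which satisfy $de_j=0$, hence genuinely commute with each other and with the $x_i\otimes x_i^*$, and are idempotent, giving $I^2=\sum_i((x_i\otimes x_i^*)^2+x_i\otimes x_i^*)+I$ directly. (iii) The diagonal contribution should not rest on ``the Schur-complement structure forces a multiple of $I^2$'': write the dual basis explicitly, as the paper does ($I^*\mapsto\mu^{-1}h_{y_n}-\lambda\mu^{-2}I$, $h_{y_n}^*\mapsto\mu^{-1}I$, $h_z^*\mapsto\mu^{-1}\sum_{w\ne z}h_w$); then the $h$-terms cancel in pairs, and $Ih_{y_n}+h_{y_n}I=0$ because $I$ is central, leaving exactly $\lambda\mu^{-2}I^2$. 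With these repairs your argument coincides with the paper's proof.
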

\begin{proof}
    Let $B$ be an invariant bilinear form on $\mfrak{gl}(m \cdot \1 + nP)$, which is an extension of $\1$ by $\mfrak{sl}(m \cdot \1 + nP)$; we can lift the generator of $\1$ to any matrix in $\mfrak{gl}$ with nonzero trace, WLOG $y_n \otimes y_n^* =: h_{y_n}$. Again, $B$ restricted to $\mfrak{sl}(m \cdot \1 + nP)$ is a multiple of the trace form, say $B(x, y) = \mu \Tr(xy)$. To define $B$ on all of $\mfrak{gl}$, we additionally need to set $B(h_{y_n}, -)$. Let $I = \sum z \otimes z^*$ be the identity matrix, $e_{zw} = z \otimes w^*, z \ne w$, and $h_z =  z\otimes z^* + y'_n \otimes {y'_n}^*$ for $z \ne y_n, {y'_n}^*$, so $e_{zw}, h_z, I$ form a basis of $\mfrak{sl}$, and along with $h_{y_n}$, they form a basis of $\mfrak{gl}$.
    
    From Corollary \ref{sl_m_even_form}, $B(h_{y_n}, h_{y_n}) = \lambda$ for some $\lambda \in k$, and we can compute (using adjoint-invariance of $B$ and that $\mfrak{sl}$ is a Lie ideal) that
    \begin{align*}
        B(h_{y_n}, I) = B(I, h_{y_n}) &= \mu \\
        B(h_{y_n}, h_z) = B(h_z, h_{y_n}) &= 0 \\
        B(h_{y_n}, e_{zw}) = B(e_{zw}, h_{y_n}) &= 0.
    \end{align*}
     Hence, the isomorphism $\mfrak{g}^* \cong \mfrak{g}$ induced by $B$ sends
    \begin{align*}
        e_{wz}^* &\mapsto \frac{1}{\mu}e_{zw} \\
        h_z^* &\mapsto \frac{1}{\mu} \sum_{w \ne z, y_n, y'_n} h_w \\
        I^* &\mapsto \frac{1}{\mu} h_{y_n} - \frac{\lambda}{\mu^2} I \\
        h_{y_n}^* &\mapsto \frac{1}{\mu}I.
    \end{align*}
    The Casimir element is then 
    \begin{equation*}
        \frac{1}{\mu}(m + 2n-1) \sum_{z \in S} (z \otimes z^*) + \frac{\lambda}{\mu^2} I^2.
    \end{equation*}
    By the same reasoning as the odd $m$ case, this simplifies to the expression in the statement of the proposition.
\end{proof}

\section{Representations of $\mfrak{gl}(P)$}
In this section, we classify the irreducible representations of $\mfrak{gl}(P)$ and relate these to the irreducible representations of $\GL(P)$ discussed in \cite{hu_supergroups_2024}.

\subsection{$\mfrak{gl}(P)$ and $U(\mfrak{gl}(P))$}

We first describe $\mfrak{gl}(P)$, $U(\mfrak{gl}(P))$, and the center of $U(\mfrak{gl}(P))$. The Lie algebra $\mathfrak{g}\mathfrak{l} (P)$ is $P \otimes P^{*} =
\underline{\End} (P)$, and the commutator is
\begin{equation*} [A, B] = AB - BA - A'B'. \end{equation*}
The representations of $\mfrak{gl}(P)$ correspond to representations of its universal enveloping algebra. $U(\mfrak{g})$ satisfies the PBW theorem, so let us find a basis for it:

If $P$ has basis $1, d$ and $P^*$ has dual basis $1^*, d^*$, then $\mfrak{gl}(P)$ has basis $1 \otimes 1^*$, $1 \otimes d^*$, $d \otimes 1^*$, $d \otimes d^*$. Note that $d(d^*) = 1^*$ and $d(1^*) = 0$. The $d$-action on $\mfrak{gl}(P)$ acts by
\begin{align*}
    1 \otimes 1^*, d \otimes d^* &\mapsto d \otimes 1^* \mapsto 0 \\
    1 \otimes d^* &\mapsto d \otimes d^* + 1 \otimes 1^*.
\end{align*}
We will instead use the basis $x = 1 \otimes d^*, x' = e = 1 \otimes 1^* + d \otimes d^*, y = 1 \otimes 1^*, y' = d \otimes 1^*$ because $e$ is both central (it is the Casimir element here) and idempotent, as $(x')^2 = [x, x] = x'$ in $U(\mfrak{g})$. Moreover, $(y')^2 = 0$ since $(y')^2 = [y, y] = 0$.

From the discussion of the ``$4$-center'' in Section \ref{pcenter}, we additionally obtain that the elements $x^2$, $y^4 - y^2$ are central.

Since $e$ is a central idempotent, we can write $U (\mathfrak{g}) = U
(\mathfrak{g}) e \oplus U (\mathfrak{g}) (1 - e)$. Set $A_1 \assign U (\mathfrak{g}) e$ and $A_0
\assign U (\mathfrak{g}) (1 - e)$.

\begin{proposition}
    The center of $U(\mfrak{gl}(P))$ is generated by $e$, $x^2$, $y^4 + y^2$, $(y^2 + y)e$, and $xy'(1 - e)$.
\end{proposition}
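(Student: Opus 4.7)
The plan is to decompose $U := U(\mfrak{gl}(P))$ via the central idempotent $e$ into $A_1 \oplus A_0$, with $A_1 = Ue$ and $A_0 = U(1-e)$, so that $Z(U) = Z(A_1) \oplus Z(A_0)$. The five claimed generators organize themselves along this decomposition: $e$ and $(y^2+y)e$ are supported in $A_1$; $xy'(1-e)$ is supported in $A_0$; and $x^2$ together with $y^4+y^2 = (y^2+y)^2$ are central in both summands. Computing each of $Z(A_1)$ and $Z(A_0)$ separately and then lifting via the idempotents will reconstruct $Z(U)$.

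First I would read off the defining relations of each summand by setting $e = 1$ or $e = 0$ in the PBW presentation of $U$. In $A_1$ these become $[x,y] = x$, $[y,y'] = y'$, $[x,y'] = 1$, $(y')^2 = 0$; in $A_0$, $[x,y] = x + y'$, $[x,y'] = 0$, $[y,y'] = y'$, $(y')^2 = 0$. In particular $x$ and $y'$ commute in $A_0$, so $R := \langle x, y'\rangle \cong k[x,y']/((y')^2)$ is commutative and $A_0$ is realized as the Ore extension $R[y;D]$ for the derivation $D$ with $D(x) = x + y'$ and $D(y') = y'$.

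For $Z(A_1)$: I would use that $\langle x, y\rangle \subset A_1$ is the enveloping algebra of the 2-dimensional non-abelian Lie algebra $[y,x] = x$, whose characteristic 2 center is the well-known $k[x^2, y^2+y]$; then verify that $x^2$ and $y^2+y$ also centralize $y'$ using the relations $xy'+y'x = 1$, $y'y = (y+1)y'$, and $(y')^2 = 0$, and carefully analyze how candidate central elements $z = P + Qy'$ with $P, Q \in \langle x, y\rangle$ are constrained by $[x,z] = [y,z] = [y',z] = 0$. For $Z(A_0)$: writing $z = \sum_i y^i r_i$ in Ore normal form, the condition $[z,y]=0$ forces $r_i \in \ker D = k[x^2] + k[x^2]\cdot xy'$; using the Leibniz-type formula $r y^i = \sum_k \binom{i}{k} y^{i-k} D^k(r)$ together with the characteristic 2 identity $D^{2^n} = D^2$ for $n \ge 1$ (valid because $D^2$ acts as the grading derivation by total degree in $x, y'$ on $R$), one derives $[y^{2^n}, r] = D^2(r)$ via Lucas's theorem, so $y^{2^n} + y^2$, in particular $y^4+y^2$, is central; a careful bookkeeping of the remaining constraints $[z,x] = [z, y'] = 0$ should then identify the full center.

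Finally, I would assemble by lifting via the idempotents: $e$ implements the separation between $A_1$ and $A_0$; $x^2$ and $y^4+y^2$ are central in both summands and lift as themselves; $y^2+y$ is central only in $A_1$ and lifts as $(y^2+y)e$; and $xy'$ is central only in $A_0$ and lifts as $xy'(1-e)$. Together these five elements then generate $Z(U)$. The hardest step will be the full classifications in $A_1$ and $A_0$: in both cases one must track how the $y'$-components of candidate central elements interact with the noncommuting bracket formulas, and ensure no further independent central generators arise beyond the claimed ones — this is particularly delicate in $A_0$, where the Ore extension framework handles $[z,y] = 0$ cleanly but the mixed constraints from $x$ and $y'$ require systematic analysis of the binomial coefficients $\binom{j+k}{k} \bmod 2$.
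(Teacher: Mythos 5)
Your top-level architecture is exactly the paper's: split $U(\mfrak{gl}(P))=A_1\oplus A_0$ along the central idempotent $e$, compute the two centers separately, and reassemble, so that $x^2$ and $y^4+y^2$ lift globally while $(y^2+y)e$ and $xy'(1-e)$ live in one summand each. Where you diverge is in the tools for the summand computations: the paper proves both lemmas by brute-force bracketing of the PBW monomials $x^ay^b$ and $x^ay^by'$ against $x,y,y'$, whereas you propose, for $A_1$, to import the known restricted center $k[x^2,y^2+y]$ of $U$ of the two-dimensional nonabelian Lie algebra and then control the $y'$-component, and, for $A_0$, to exploit the Ore presentation $R[y;D]$ with $R=k[x,y']/((y')^2)$, $D(x)=x+y'$, $D(y')=y'$. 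The pieces of this you actually carry out are correct: the relations in each summand are right, $\ker D=k[x^2]\oplus k[x^2]xy'$ is right, $D^2$ is the Euler derivation so $D^{2^n}=D^2$, and the Lucas computation $[y^{2^n},r]=D^2(r)$ gives a genuinely more conceptual explanation than the paper's of why $y^4+y^2$ (and not $y^2+y$) is central in $A_0$. The Ore normal form also handles the constraint $[z,y]=0$ more cleanly than monomial-by-monomial bracketing.

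The gap is that the proposition is not "these elements are central" but "these elements generate the center," and in both summands you defer precisely that half to "carefully analyze" and "careful bookkeeping should then identify the full center." That classification is the entire content of the paper's two lemmas; without it you have only the inclusion $\langle e,x^2,y^4+y^2,(y^2+y)e,xy'(1-e)\rangle\subseteq Z(U)$. The deferral is fillable within your framework, and you have correctly isolated where the work sits: in $A_1$ you must show a central $z=P+Qy'$ forces $Q=0$ and $P\in k[x^2,y^2+y]$ (note $[z,x]=0$ ties $Q$ to $[P,x]$, so the $y'$-component does not decouple for free), and in $A_0$ you must impose $[z,x]=0$ and $[z,y']=0$ on $z=\sum_i y^i r_i$ with $r_i\in\ker D$, using $D^k(x)=x+y'$ for $k$ odd and $D^k(x)=x$ for $k\geq 2$ even, and check that the solutions are exactly the subalgebra generated by $x^2$, $xy'$, $y^4+y^2$. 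Until those finite computations are written out, the attempt is a sound plan that parallels the paper's proof in structure, not yet a proof of the statement.
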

\begin{proof}
    It suffices to find central elements in $A_1$ and $A_0$ separately.
    \begin{lemma}
        The center of $A_1$ is generated by $x^2$ and $y^2 + y$.
    \end{lemma}
    \begin{proof}
    In $A_1$, we have
    \begin{align*}
        xy' + y'x &= 1 \\
        yy' + y'y &= y' \\
        xy + yx &= x.
    \end{align*}
    We will compute brackets of monomials with $x, y, y'$; since $(y')^2 = 0$ and $x^2$ is central, we only need to consider monomials with $y'$-degree and $x$-degree at most $1$. First, we have
    \begin{equation*}
        y^b x = x(y + 1)^b,
    \end{equation*}
    so
    \begin{align*}
        x^ay^by'x + x^{a + 1}y^b y' &= x^{a+1}(y+1)^by' + x^a y^b + x^{a+1}y^by' = 
            x^{a+1}((y+1)^b + y^b)y' + x^a y^b \\
        x^a y^b x + x^{a+1}y^b &= x^{a+1}(y+1)^b + x^{a+1}y^b = 
            x^{a+1}((y+1)^b + y^b)
    \end{align*}
    Hence, we see that the only linear combinations of monomials that commute with $x$ are those generated by $y^2 + y$ and $x$. 
    Next, we have
    \begin{equation*}
        x^a y + y x^a = a x^a
    \end{equation*}
    so central elements must have even $x$-degree.
    Finally, we can check that 
    \begin{align*}
        x^a y' + y' x^a &= a x^{a-1} \\
        y^by' &= y'(y+1)^b
    \end{align*}
    so all elements in $\langle x^2, y^2 + y \rangle$ are central. Therefore, the center of $A_1$ is generated by $x^2$ and $y^2 + y$.
\end{proof}
\begin{lemma}
    The center of $A_0$ is generated by $x^2$, $xy'$, and $y^4 + y^2$.
\end{lemma}
\begin{proof}
    In $A_0$, we have
    \begin{align*}
        xy' + y'x &= 0 \\
        yy' + y'y &= y' \\
        xy + yx &= x + y'.
    \end{align*}
    Again, we first consider monomials $x^a y^b$ and $x^a y^b y'$. We first consider how they bracket with $y'$; since $x$ commutes with $y'$, it suffices to consider $[y^b, y']$. Like in the $A_1$ case,
    \begin{equation*}
        y^by' = y'(y+1)^b,
    \end{equation*}
    so the centralizer of $y'$ is generated by $x$, $y'$, and $y^2 + y$. Now consider whether these commute with $y$:
    \begin{align*}
        x^ay &= yx^a + a(x^a + x^{a-1}y') \\
        \implies x^a(y^{2^k} + y)y + yx^a (y^{2^k} + y)y &= a(x^a + x^{a-1}y')(y^{2^k} + y), \\
        x^ay'y + yx^a y' &= x^a y y' + + x^a y' + x^a y y' + a(x^a + x^{a-1}y')y' = (a+1)x^a y'
    \end{align*}
    and
    \begin{align*}
         &\text{ }x^a(y^{2^k} + y)y'y + yx^a(y^{2^k} + y)y' \\
        = &\text{ }x^a(y^{2^k} + y)yy' + x^a(y^{2^k} + y)y' + x^a y(y^{2^k} + y)y' + a(x^a + x^{a-1}y')(y^{2^k} + y)y' \\
        = &\text{ }(a + 1)x^a(y^{2^k} + y)y'.
    \end{align*}
    Therefore, an element commuting with both $y'$ and $y$ must be in $\langle x^2, xy', y^2 + y \rangle$. 
    Finally, we can check that 
    \begin{align*}
        yx + xy &= x + y' \\
        y^{2^k} x + xy^{2^k} &= x(y + 1)^{2^k} + (2^k)(y^{2^k} + y') + xy^{2^k} = x 
    \end{align*}
    so an element commuting with all three of $x, y', y$ must be in $\langle x^2, xy', y^4 + y^2 \rangle$.
    
    Thus the center of $A_0$ are generated by $x^2$, $xy'$, $y^4 + y^2$.
    \end{proof}
    Therefore, the center of $U(\mfrak{gl}(P))$ is generated by $x^2$, $y^4 + y^2$, $(y^2 + y)e$, and $xy'(1 - e)$.
\end{proof}

\begin{remark}
This implies that the ``$4$-center'' ($x^2, y^4 + y^2$) and Casimir element ($e$) do not generate the center of $U(\mfrak{g})$ since $(y^2 + y)e$ and $xy'(1 - e)$ do not lie in the algebra they generate.
\end{remark}

\begin{question}
    How can we describe the whole center of $U(\mfrak{gl}(m \cdot \1 + nP))$?
\end{question}

\subsection{Classification of irreducible representations of $\mfrak{gl}(P)$}\label{glp_irreps}
In this section, we find all irreducible representations of $\mfrak{gl}(P)$. Suppose $M$ is a simple $U
(\mathfrak{g})$-module. Then since $e$ is a central idempotent, either $e$ acts by $1$ on $M$, so $M$ is a simple $U
(\mathfrak{g}) e$-module, or it acts by 0, so $M$ is a simple $U
(\mathfrak{g}) (1 - e)$-module. Hence, it suffices to find simple $A_0$-modules and simple $A_1$-modules.
\begin{proposition}
     The simple modules of $A_1$ are a 2-dimensional family of modules with underlying object $P$ parametrized by $a, b \in k$; call these $L(1, a, b)$. The $\mathfrak{gl}(P)$-action on $L(1, a,b)$ is
     \begin{equation*} y \mapsto \begin{pmatrix}
     b & 0\\
     0 & b + 1
   \end{pmatrix}, x \mapsto \begin{pmatrix}
     0 & 1\\
     a & 0
   \end{pmatrix}, y' \mapsto \begin{pmatrix}
     0 & 0\\
     1 & 0
   \end{pmatrix}. \end{equation*}
\end{proposition}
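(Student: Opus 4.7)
The plan is to classify simple $A_1$-modules via their central characters. The center $Z(A_1)$ is generated by $x^2$ and $y^2 + y$, and both generators lie in $\ker d \subseteq U(\mfrak{g})$: using that $x' = e$ is central, $d(x^2) = x'x + xx' = 0$, and using the $A_1$-relation $yy' + y'y = y'$, $d(y^2+y) = y' + y' = 0$. Hence both act as morphisms of any simple $M$ in $\Ver_4^+$, and a Schur-type argument (below) shows they act by scalars; set $x^2 = a$ and $y^2+y = b^2+b$ for $a, b \in k$ (choosing $b$ using $k = \bar k$).

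The Schur argument goes as follows. For a $d$-invariant central element $c \in U(\mfrak{g})$, the generalized eigenspaces of $c$ on $M$ are simultaneously $U(\mfrak{g})$-stable and $d$-stable, hence subobjects of $M$; simplicity forces only one to be nonzero. On that eigenspace $\ker(c - \lambda)$ is again a subobject, nonzero by nilpotency of $c - \lambda$ on a finite-dimensional space, hence all of $M$, so $c = \lambda$. Once $y^2 + y = b^2 + b$, the factorization $(y-b)(y-b-1) = 0$ diagonalizes $y$ with eigenvalues $b$ and $b+1$, giving $M = V_b \oplus V_{b+1}$; the $A_1$-relations $yx = x(y+1)$ and $yy' = y'(y+1)$ then force $x$ and $y'$ to map $V_\lambda \to V_{\lambda+1}$.

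The key structural step is to pin down the underlying object of $M$. Since $e = dx$ acts as $1$, any $v \in \ker d$ satisfies $v = ev = d(xv) - x(dv) = d(xv) \in \operatorname{im} d$, so $\ker d = \operatorname{im} d$, forcing $M \cong nP$ for some $n \geq 1$ (no $\1$-summand). The equation $y^2 + y = b^2 + b$ holds on $\ker d$, so $y$ diagonalizes there, and after possibly swapping $b \leftrightarrow b+1$ one finds a nonzero $v \in \ker d \cap V_{b+1}$; then $dv = 0$ forces $y'v = 0$ (as the $V_b$-component of $dv$ vanishes). Using $x^2 v = av$, $d(xv) = x'v = v$, and $y'(xv) = v$ (from $xy' + y'x = 1$), one checks directly that $N = \operatorname{span}(v, xv)$ is stable under $\mfrak{g}$ and $d$; moreover $xv \neq 0$, else $N = kv$ would be a copy of $\1$, contradicting $M \cong nP$. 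Simplicity gives $M = N$, with underlying object $P$. Reading off the action in the basis $v_0 = xv$, $v_1 = v$ (so that $dv_0 = v_1$, $yv_0 = bv_0$, $yv_1 = (b+1)v_1$) recovers the stated matrices; distinct $(a,b)$ give non-isomorphic modules since any isomorphism must match $\ker d$ and preserve $y$-eigenvalues there.

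The main obstacle is the Schur-type step, since the endomorphism algebra of a simple object in $\Ver_4^+$ need not reduce to $k$; the generalized-eigenspace decomposition used above circumvents this cleanly, relying only on finite-dimensionality and simplicity. A secondary subtlety is that the central character $(a, b^2+b)$ does not determine the simple module on the nose: the two labels $b$ and $b+1$ yield inequivalent modules, distinguished by which weight space of $y$ intersects $\ker d$ nontrivially, which accounts for the full $k \times k$ parametrization rather than $k \times (k/\{b \sim b+1\})$.
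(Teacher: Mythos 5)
Your route is genuinely different from the paper's: the paper passes to the $8$-dimensional quotient $B_1 = A_1/(x^2-a,\,y^2+y-b(b+1))$ and exhibits its two simple $2$-dimensional modules explicitly as submodules (so existence comes for free), whereas you argue intrinsically inside an arbitrary simple $M$, producing a ``lowest-weight'' vector $v\in\ker d$ with $yv=(b+1)v$, $y'v=0$ and showing $\operatorname{span}(v,xv)$ is a subobject. Most of your steps are sound, and your treatment of the Schur step and of the $b\leftrightarrow b+1$ ambiguity is more careful than the paper's. But there is one step whose justification fails as written: you assert that ``$y$ diagonalizes on $\ker d$'' and hence that $\ker d$ meets a $y$-eigenspace. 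The subspace $\ker d$ is not $y$-stable in general, since $d(ym)=y'm+y\,dm$ and $y'$ need not kill $\ker d$ (already on $B_1$ itself the unit lies in $\ker d$ but $y\cdot 1$ does not), and an arbitrary subspace of $V_b\oplus V_{b+1}$ need not intersect either eigenspace. Your parenthetical argument for $y'v=0$ (``the $V_b$-component of $dv$ vanishes'') is likewise not a proof; the correct computation is $y'v=d(yv)+y(dv)=(b+1)dv=0$, which again presupposes that $v$ is already an eigenvector in $\ker d$. So the existence of a nonzero $y$-eigenvector in $\ker d$ is a genuine unproved claim in your write-up.

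Fortunately the repair is short and delivers $y'v=0$ simultaneously: set $W=\ker d\cap\ker y'$. Since $[d,y']=0$ and $(y')^2=0$, we have $y'(\ker d)\subseteq W$; hence if $W=0$ then $y'$ kills $\ker d$, i.e. $W=\ker d\neq 0$, a contradiction, so $W\neq 0$. Moreover $W$ is $y$-stable: for $w\in W$, $d(yw)=y'w+y\,dw=0$ and $y'(yw)=(y+1)y'w=0$. Since $(y+b)(y+b+1)=0$ on $W$, $W$ contains a nonzero $y$-eigenvector $v$, killed by both $d$ and $y'$, and your argument then proceeds verbatim. Two smaller points: your argument only shows every simple module has the stated form; to get the full $k\times k$ family (both labels $b$ and $b+1$ realized) one should still verify that the displayed matrices, with $d$ acting by $v_0\mapsto v_1\mapsto 0$, satisfy the $A_1$-relations and the $d$-compatibility -- a routine check that the paper obtains automatically from its explicit submodules of $B_1$. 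Also note the statement's matrices give $x^2=a$ (the paper's proof writes $a^2$); your convention matches the statement, so this is only a labeling difference.
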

\begin{proof}
In simple $A_1$-modules, $x^2$ and $y^2 - y$ will act by scalars; suppose $x^2$ acts by $a^2$ and $y^2 - y$ by $b(b+1)$. Then the quotient $B_1 := A_1/(x^2 - a, y^2 - y - b(b+1))$ is 8-dimensional. $B_1$ has two simple 2-dimensional modules: one is isomorphic to the submodule of $B_1$ spanned by $bxy' + xyy', by' + yy'$, and
the other to the submodule spanned by $(b + 1) xy' + xyy', (b + 1) y' + yy'$.

On the former, the action is
\begin{equation*} y \mapsto \begin{pmatrix}
     b & 0\\
     0 & b + 1
   \end{pmatrix}, x \mapsto \begin{pmatrix}
     0 & 1\\
     a & 0
   \end{pmatrix}, y' \mapsto \begin{pmatrix}
     0 & 0\\
     1 & 0
   \end{pmatrix} \end{equation*}
and on the latter it's
\begin{equation*} y \mapsto \begin{pmatrix}
     b + 1 & 0\\
     0 & b
   \end{pmatrix}, x \mapsto \begin{pmatrix}
     0 & 1\\
     a & 0
   \end{pmatrix}, y' \mapsto \begin{pmatrix}
     0 & 0\\
     1 & 0
   \end{pmatrix} . \end{equation*}
Distinct values of $a, b$ give distinct submodules.
\end{proof}

\begin{remark}
  $B_1$ is semisimple since we can check that any element annihilating both
  modules is 0, so it's the product of two matrix algebras of size 2.
\end{remark}

We now find the simple $A_0$-modules.
\begin{proposition}
    The element $xy'$, which is central in $A_0$, acts by $0$ on a simple $A_0$-module.
\end{proposition}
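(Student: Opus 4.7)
The plan is to reduce this to showing that $xy'$ is nilpotent in $A_0$, and then appeal to the general fact that a central nilpotent element must act as zero on every simple module. The centrality of $xy'$ is already given in the preceding lemma, so the substantive step is the nilpotence.

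For nilpotence, I would use the relations that hold in $A_0$, specifically $xy' + y'x = 0$ and the PBW relation $(y')^2 = 0$ (which follows from $[y,y]=0$ in $\mfrak{gl}(P)$ together with the defining braiding relation of $U$, giving $(y')^2 = [y,y]$ in characteristic $2$). Since $\operatorname{char} k = 2$, the first relation says $y'x = xy'$, and then
\begin{equation*}
(xy')^2 \;=\; x\,(y'x)\,y' \;=\; x\,(xy')\,y' \;=\; x^2 (y')^2 \;=\; 0.
\end{equation*}

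For the abstract step, I would argue as follows: if $M$ is a simple $A_0$-module and $z$ is a central element with $z^2 = 0$, then $\ker z \subseteq M$ is a submodule by centrality, so it is either $0$ or all of $M$. If $\ker z = 0$, then $z$ is injective on $M$, but $z^2 = 0$ forces $z\,(zm) = 0$ for every $m$, hence $zm = 0$ by injectivity, a contradiction unless $M = 0$. Therefore $\ker z = M$, meaning $z$ acts as zero. Applying this to $z = xy'$ completes the proof.

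The argument is essentially a one-line computation followed by a standard Schur-type observation, so I do not anticipate any real obstacle; the only point that requires care is bookkeeping the correct relations in $A_0$ (as opposed to $A_1$ or the full $U(\mfrak{gl}(P))$), since the sign/idempotent data differs between the two blocks.
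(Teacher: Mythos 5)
Your proposal is correct and follows essentially the same route as the paper: the paper also notes that $xy'$ is central and nilpotent in $A_0$ (via $(y')^2 = [y,y] = 0$ and the relation $xy' + y'x = 0$) and concludes it must act by zero on any simple module. Your explicit computation $(xy')^2 = x^2(y')^2 = 0$ and the kernel argument just spell out details the paper leaves implicit.
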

\begin{proof}
    We proved above that $xy'$ is central in $A_0$. It is nilpotent since $(y')^2 = 0$, so on simple modules it must act by $0$.
\end{proof}

As mentioned previously, $x^2$ and $y^4 + y^2$ are central in $A_0$ as well.
\begin{proposition}
    The simple $A_0$-modules form a two-dimensional family parametrized by scalars $a, b \in k$; call them $L(0, a, b)$.
    \begin{itemize}
        \item If $a = 0$, $L(0, 0, b)$ is one-dimensional. The $\mfrak{gl}(P)$ action is 
        \begin{equation*}
            y \mapsto (b), x, y' \mapsto (0).
        \end{equation*}
        \item If $a \ne 0$, $L(0, a, b)$ is two-dimensional with underlying object $\1^2$. The $\mfrak{gl}(P)$-action is 
        \begin{equation*}
            y
  \mapsto \begin{pmatrix}
    b & 0\\
    0 & b + 1
    \end{pmatrix}, x \mapsto \begin{pmatrix}
    0 & 1\\
    a & 0
  \end{pmatrix}, y' \mapsto (0).
        \end{equation*}
    \end{itemize}
    Note that if $a = 1$, $L(0, 1, b) = L(0, 1, b + 1)$.
\end{proposition}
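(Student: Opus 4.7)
The plan is to reduce the classification of simple $A_0$-modules to simple $U(L_2)$-modules in characteristic $2$, where $L_2$ is the two-dimensional non-abelian Lie algebra with bracket $[y, x] = x$. The key step is to show that on any simple $A_0$-module $M$, the element $y'$ acts by zero. Indeed, $y'M$ is an $A_0$-submodule: $y'(y'M) = 0$ because $(y')^2 = 0$; $x(y'M) = y'(xM) \subseteq y'M$ because $xy' + y'x = 0$ forces $xy' = y'x$ in characteristic $2$; and $y(y'M) \subseteq y'M$ because $yy' = y'(y+1)$. By simplicity, either $y'M = 0$ and we are done, or $y'M = M$; in the latter case $y'M \subseteq \ker y'$ would force $M \subseteq \ker y'$, hence $y'M = 0$, a contradiction. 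So $y' \equiv 0$ on $M$, and the preceding vanishing of $xy'$ is automatic.

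With $y' = 0$, the defining relation $xy + yx = x + y'$ collapses to $xy + yx = x$, so $M$ is a simple module for the quotient $A_0/(y') \cong U(L_2)$. The central elements $x^2$ and $y^4 + y^2$ act by scalars which I write as $a \in k$ and $c = \bigl(b(b+1)\bigr)^2$ for some $b \in k$, using that $k$ is algebraically closed; note that $y^4 + y^2 - c = \bigl((y+b)(y+b+1)\bigr)^2$. The subspace $xM$ is an $A_0$-submodule since $y(xM) = (xy+x)M \subseteq xM$ and $x(xM) \subseteq xM$.

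I then split on whether $a = 0$. If $a = 0$, simplicity forces $xM = 0$ (otherwise $xM = M$ would give $M = x^2M = 0$), and $M$ becomes a simple $k[y]/(y^4+y^2-c)$-module, hence one-dimensional with $y$ acting by a root of $(y+b)(y+b+1)$; varying $b$ over $k$ yields exactly the family $L(0, 0, b)$. If $a \ne 0$, then $x$ is invertible, and by PBW the central quotient $U(L_2)/(x^2 - a,\, y^4 + y^2 - c)$ is spanned by $\{y^i x^j : 0 \le i \le 3,\, j \in \{0,1\}\}$, hence eight-dimensional; consequently $M$ is finite-dimensional and has a $y$-eigenvector $v$ with eigenvalue $\mu$ satisfying $\mu^4 + \mu^2 = c$, so $\mu \in \{b, b+1\}$, and without loss of generality $\mu = b$. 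Then $y(xv) = x(y+1)v = (b+1)(xv)$, $x(xv) = av$, and $y'v = y'(xv) = 0$, so $\text{span}(v, xv)$ is an $A_0$-submodule of $M$; simplicity together with $xv \ne 0$ gives $M = \text{span}(v, xv)$ with precisely the action defining $L(0, a, b)$. The main subtlety is the a priori possibility of infinite-dimensional simple modules, which is eliminated by the finite-dimensionality of the central quotient; the asserted identification $L(0, 1, b) = L(0, 1, b+1)$ at $a = 1$ (and more generally $L(0, a, b) \cong L(0, a, b+1)$ for $a \ne 0$) follows by swapping the basis $v \leftrightarrow xv$.
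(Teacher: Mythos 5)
Your argument is correct, and it reaches the classification by a genuinely different route than the paper. The paper fixes the central character, passes to the explicit $12$-dimensional quotient $B_0 = A_0/(x^2-a^2,\ xy',\ y^4-y^2-b^2(b^2+1))$, and reads the simple modules off an explicit composition series of the regular module. You instead first prove the stronger preliminary fact that $y'$ itself (not merely $xy'$) annihilates every simple $A_0$-module: the observation that $y'M$ is a submodule, using $xy'=y'x$, $yy'=y'(y+1)$ and $(y')^2=0$, is clean and subsumes the paper's separate proposition about $xy'$. After that you are over $U(L_2)$, where a standard central-character-plus-eigenvector argument finishes the job. This is shorter and more structural; what the paper's computation buys instead is the extension structure of $B_0$ (how the simples sit inside the regular module), which is not needed for the statement. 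Your closing remark is also correct and in fact sharpens the statement: swapping the basis $v\leftrightarrow xv$ (equivalently, conjugating by the invertible element $x$, which realizes $y\mapsto y+1$ as an inner automorphism of $A_0/(y')$) gives $L(0,a,b)\cong L(0,a,b+1)$ for \emph{every} $a\neq 0$, not only $a=1$.

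Two small caveats, neither fatal. First, like the paper, you assert rather than prove that the central elements $x^2$ and $y^4+y^2$ act by scalars on an a priori infinite-dimensional simple module; your appeal to the finite-dimensionality of the central quotient is circular as stated, since forming that quotient already presupposes the scalars. Since $k$ may be countable (e.g.\ $\overline{\mathbb{F}}_2$), the clean fix is to note that $A_0$ is a free module of finite rank over the central polynomial subalgebra $k[x^2,\,y^4+y^2]$ (and $U(L_2)$ over $k[x^2,\,y^2+y]$), which forces every simple module to be finite dimensional with the center acting by a character. Second, your argument shows every simple module is isomorphic to some $L(0,a,b)$; to have the full classification one should also record the (one-line) verification that each listed $L(0,a,b)$ satisfies the relations and is simple---for $a\neq 0$ because $x$ is invertible and interchanges the two $y$-eigenlines, which are the only candidate proper submodules.
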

\begin{proof}
Since $x^2, y^4 - y^2$ must act by scalars, choose $a, b$ such that $x^2$ acts by $a^2$ and $y^4 - y^2$ acts by $b^2(b^2 + 1)$. 
Let $B_0 \assign A_0 / (x^2 - a^2, xy', y^4 - y^2 - b^2 (b^2 + 1))$. 
    An explicit basis of $B_0$ is $1, y, y^2, y^3$, $x, xy, xy^2, xy^3$,
$y', yy', y^2 y', y^3 y'$.
Let $z_1 := b$ and $z_2 := b + 1$.

Consider the basis of $B_0$ given by 
\begin{align*}
  X_1 & = z_1 z_2^2 + z_2^2 y + z_1 y^2 + y^3 \\
  X_2 & = z_1^2 z_2 + z_1^2 y + z_2 y^2 + y^3 \\
  \bar{X}_1 & = z_2^2 + y^2 \\
  \bar{X}_2 & = z_1^2 + y^2\\
  Y_1 & = z_1 z_2^2 y' + z_2^2 yy' + z_1 y^2 y' + y^3 y' = X_1 y'\\
  Y_2 & = X_2 y'\\
  \bar{Y}_i & = \bar{X}_i y' \\
  x \overline{X_i} & \\
  xX_i &
\end{align*}

The actions of $x, y, y'$ on $X_i, Y_i, \overline{X_i}, \overline{Y_i}, x \overline{X_i}, x X_i$ are
\begin{align*}
  xY_i = 0, &\quad x (xX_i) = a X_i, \\
  yY_i = z_i Y_i, &\quad yX_i = z_i X_i, \quad y (xX_i) = z_{i + 1}  (xX_i) + Y_{i + 1}, \\
  y' Y_i = 0 &\quad y' X_i = Y_{i + 1}, \quad y' (xX_i) = 0, \\
  x \overline{Y_i} = 0, &\quad x (x \overline{X_i}) = a^2  \overline{X_i}, \\
  y \overline{Y_i} = Y_i + z_i  \overline{Y_i},  &\quad y \overline{X_i} = X_i + z_i  \overline{X_i},\quad  y (x \overline{X_i}) = xX_i + z_{i + 1} x \overline{X_i} + \overline{Y_{i + 1}}, \\
  y'  \overline{Y_i} = 0, &\quad y'  \bar{X}_i = \overline{Y_{i + 1}}, \quad y' (x \overline{X_i}) = 0.
\end{align*}
Therefore, the $Y_i$ are 1-dimensional submodules where $y$ acts by $z_i$, $x,
y'$ act by 0; call these $M_i$. The 4-dimensional submodule spanned by $Y_i,
\overline{Y_i}$ is a direct sum of (nontrivial extension of 2 copies of $M_1$,
spanned by $Y_1, \overline{Y_1}$) and (nontrivial extension of 2 copies of
$M_2$, spanned by $Y_2, \overline{Y_2}$).

Let $C$ be the quotient of $B_0$ by this 4-dimensional submodule. Then for each $i \in \{1, 2\}$, $X_i, xX_i$ form a
2-dimensional submodule of $C$ killed by $y'$ and
\[ y \mapsto \left(\begin{array}{cc}
     z_i & 0\\
     0 & z_{i + 1}
   \end{array}\right), x \mapsto \left(\begin{array}{cc}
     0 & a\\
     1 & 0
   \end{array}\right) \]
call this $N_i$. We can check that $C$ is a direct sum of (nontrivial extension of 2 copies of $N_1$
spanned by $X_1, \overline{X_1}$, and $x$ times these) and (nontrivial
extension of 2 copies of $N_2$, spanned by the $X_2$'s, etc.). So we get a
2-dimensional family of simple modules parametrized by $a, b$. If $a = 0$,
then both $N_i$ are nontrivial extensions of $M_1, M_2$. If $a = 1$, $N_1 = N_2$.
\end{proof}

\subsection{Tensor products of the simple $\mathfrak{gl}(P)$-representations}

\begin{proposition}
    The tensor products of simple $\mfrak{gl}(P)$ representations decompose as follows (all extensions are nontrivial, the head is first):
    \begin{itemize}
    \item $L(1, a_1, b_1) \otimes L(1, a_2, b_2) = [L(0, a_1 + a_2, b_1 + b_2), L(0, a_1 + a_2, b_1 + b_2 + 1)]$
    \item $L(0, a_1, b_1) \otimes L(0, a_2, b_2) = [L(0, a_1 + a_2, b_1 + b_2), L(0, a_1 + a_2, b_1 + b_2 + 1)]$
    \item $L(0, 0, b_1) \otimes L(0, a, b_2) = L(0, a, b_1 + b_2)$
    \item $L(0, 0, b_1) \otimes L(1, a, b_2) = L_1 (a, b_1 + b_2)$
    \item $L(0, a_1, b_1) \otimes L(1, a_2, b_2) = [L (1, a_1 + a_2, b_1 + b_2), L(1, a_1 + a_2, b_1 + b_2 + 1)]$.
\end{itemize}
\end{proposition}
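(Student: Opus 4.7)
The proof strategy is to use the Hopf algebra structure of $U(\mfrak{gl}(P))$ in $\Ver_4^+$. Since $\mfrak{gl}(P)$ is generated by primitive elements, the action of $g\in\mfrak{gl}(P)$ on a tensor product $V\otimes W$ unfolds through the coproduct $\Delta(g)=g\otimes 1+1\otimes g$ together with the nontrivial braiding $c(g\otimes v)=v\otimes g+v'\otimes g'$ coming from the $R$-matrix $R=1\otimes 1+d\otimes d$, giving the explicit formula
\[
g\cdot(v\otimes w) = g(v)\otimes w + v\otimes g(w) + v'\otimes g'(w).
\]
Extending multiplicatively in $U(\mfrak{gl}(P))\otimes U(\mfrak{gl}(P))$ with the braided product yields the identity $\Delta(x^2)=x^2\otimes 1+x'\otimes x'+1\otimes x^2$, with similar explicit expressions for $\Delta(y^2+y)$, $\Delta(xy')$, and $\Delta(y^4+y^2)$.

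The first step is to use $\Delta(e)=e\otimes 1+1\otimes e$ to see in each case whether the tensor product lies in $A_0$ or $A_1$: on $L(\varepsilon_1,a_1,b_1)\otimes L(\varepsilon_2,a_2,b_2)$ the element $e$ acts by $\varepsilon_1+\varepsilon_2\pmod 2$. Next I would compute the central characters of $x^2$ and $y^2+y$ (in the $A_1$ case) or of $x^2$, $xy'$, and $y^4+y^2$ (in the $A_0$ case) on the tensor product using the explicit coproduct identities. These central character values pin down the possible isomorphism types of composition factors, immediately singling out the two candidates appearing on the right-hand side of each formula.

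With the candidates identified, I would construct explicit submodules by tracking cyclic orbits of well-chosen vectors. For instance, in $L(0,a_1,b_1)\otimes L(0,a_2,b_2)$, applying $x$ to $e_1^{(1)}\otimes e_1^{(2)}$ gives $e_0^{(1)}\otimes e_1^{(2)}+e_1^{(1)}\otimes e_0^{(2)}$, and applying $x$ again returns a scalar multiple of the original; matching $y$-eigenvalues identifies this two-dimensional submodule with $L(0,a_1+a_2,b_1+b_2+1)$, whence the quotient is $L(0,a_1+a_2,b_1+b_2)$. The easier cases $L(0,0,b_1)\otimes L(0,a,b_2)$ and $L(0,0,b_1)\otimes L(1,a,b_2)$ reduce to direct computation because one factor is one-dimensional and the $v'\otimes g'(w)$ correction vanishes on it.

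The main obstacle will be establishing that the relevant extensions are nontrivial. For the cases involving $L(1,\cdot,\cdot)$ on one or both sides, an underlying-object argument suffices: for example $P\otimes P\cong 2P$ in $\Ver_4^+$ is a nontrivial extension of $\1^2$ by $\1^2$, not isomorphic to $4\cdot\1$, so a splitting of the $U(\mfrak{gl}(P))$-module structure would force a splitting of underlying objects in $\Ver_4^+$ that does not exist. For the purely even case $L(0,\cdot,\cdot)\otimes L(0,\cdot,\cdot)$, where both sides have underlying object isomorphic to $4\cdot\1$ and the categorical argument gives no information, I would need to argue directly that no $U(\mfrak{gl}(P))$-equivariant idempotent projection onto either composition factor exists; this reduces to a finite linear-algebra computation with the explicit matrix actions derived above, and I expect it to be the most delicate step of the proof.
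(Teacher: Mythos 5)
Your core strategy is the same as the paper's: the paper simply writes out the action of $x,y,y',x'$ on each four-dimensional tensor product using exactly the braided action formula you state (your formula $g\cdot(v\otimes w)=g(v)\otimes w+v\otimes g(w)+v'\otimes g'(w)$ and the identity $\Delta(x^2)=x^2\otimes 1+x'\otimes x'+1\otimes x^2$ are correct), and your central-character bookkeeping, the treatment of the cases with a one-dimensional factor, and the explicit submodule $\operatorname{span}(e_1\otimes e_1,\;e_0\otimes e_1+e_1\otimes e_0)$ in the all-even case all check out and identify the same composition factors as the paper does.

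The genuine gap is the nontriviality of the extensions, which is exactly the part you do not carry out. First, your underlying-object shortcut does not cover the fifth case $L(0,a_1,b_1)\otimes L(1,a_2,b_2)$: there the underlying object is $\1^2\otimes P\cong 2P$, and a split sum of two copies of $L(1,\cdot,\cdot)$ also has underlying object $P\oplus P\cong 2P$, so no contradiction can arise; only the case $P\otimes P\cong 2P$ versus $4\cdot\1$ is settled this way (equivalently, there one may note that $y'$ acts nonzero on the product but by zero on both factors). Second, the deferred ``finite linear-algebra computation'' is the decisive step, and if you attempt it with the matrices of the simples as given you will find it cannot close as planned: since $d$, $y'$ and $x'$ all kill the $L(0,\cdot,\cdot)$ factor, the braiding correction vanishes (on whichever side you place it), and then $\operatorname{span}(e_1\otimes e_1,\;e_0\otimes e_1+e_1\otimes e_0)$ and $\operatorname{span}(e_0\otimes e_0,\;a_2\,e_0\otimes e_1+a_1\,e_1\otimes e_0)$ are complementary submodules of $L(0,a_1,b_1)\otimes L(0,a_2,b_2)$ whenever $a_1\neq a_2$ (both nonzero), while $\operatorname{span}(u_0\otimes e_0+a_1\,u_1\otimes e_1,\;u_0\otimes e_1)$ and $\operatorname{span}(u_1\otimes e_0+u_0\otimes e_1,\;u_1\otimes e_1)$ are complementary submodules of $L(0,a_1,b_1)\otimes L(1,a_2,b_2)$ (here $u_i$ is the basis of the $L(0)$ factor and $e_0,e_1$ that of the $L(1)$ factor with $de_0=e_1$). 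So in these cases the composition factors are as claimed but the module is decomposable; the indecomposability you set out to prove fails there, and your proposal leaves precisely this point unresolved. You should either perform the case-by-case check explicitly (which is all the paper does, very tersely, for the $L(1)\otimes L(1)$ case) and flag the resulting discrepancy, or restrict the nontriviality assertion to the cases where the $y'$-action/underlying-object argument genuinely applies.
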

\begin{proof}
Tensoring $L(1, a_1, b_1)$ with $L(1, a_2, b_2)$, we get a 4-dimensional module
where $y$ acts by $\diag (b_1 + b_2, b_1 + b_2 + 1, b_1 + b_2 + 1, b_1 + b_2)$,
\begin{equation*} y' \mapsto \begin{pmatrix}
     0 & 1 & 1 & 0\\
     0 & 0 & 0 & 1\\
     0 & 0 & 0 & 1\\
     0 & 0 & 0 & 0
   \end{pmatrix}, x \mapsto \begin{pmatrix}
     0 & a_2 & a_1 & 0\\
     1 & 0 & 0 & a_1\\
     1 & 0 & 0 & a_2\\
     0 & 1 & 1 & 0
   \end{pmatrix}, x' \mapsto 0. \end{equation*}
Hence, this is an extension of $L(0, a_1 + a_2, b_1 + b_2)$ by $L(0, a_1 + a_2, b_1
+ b_2 + 1)$. We can likewise compute the others and conclude the extensions are nontrivial.

Note that 
\begin{equation*}
    L(1, a_1, b_1) \otimes L(1, a_2, b_2) \not\cong L(0, a_1, b_1) \otimes L(0, a_2, b_2),
\end{equation*}
though they are extensions of the same two representations: in the former $y'$ does not act trivially, while it does in the latter. 
\end{proof}

\subsection{Restriction of $\GL(P)$-representations to $\mfrak{gl}(P)$-representations}
In this section, we relate the irreducible $\GL(P)$-representations, as classified in \cite{hu_supergroups_2024}, to the irreducible $\mfrak{gl}(P)$-representations classified above.
\begin{proposition}
    The irreducible representations of $\GL(P)$ restrict to $\mfrak{gl}(P)$-representations as follows:
    \begin{itemize}
        \item Tensor powers of $\chi$ restrict to the trivial $\mfrak{gl}(P)$ representation
        \item $T_1 \otimes \chi^n$ reduces to $L(1, 0, 0)$
        \item $T_2 \otimes \chi^n$ reduces to $L(0, 1, 0) = L(0, 1, 1)$
        \item $T_3 \otimes \chi^n$ reduces to $L(1, 1, 0)$
        \item $\xi \otimes \chi^n$ reduces to $L(0, 0, 1)$
        \item $\xi T_1 \otimes \chi^n$ reduces to $L(1, 0, 1)$
        \item $\xi T_3 \otimes \chi^n$ reduces to $L(1, 1, 1)$
    \end{itemize}
\end{proposition}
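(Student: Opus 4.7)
The plan is to use the fact that $\mfrak{gl}(P) = \Lie(\GL(P))$, so every rational representation of $\GL(P)$ differentiates to a representation of $\mfrak{gl}(P)$. For each of the seven listed representations I would (a) recall from \cite{hu_supergroups_2024} the explicit matrix description of the $\GL(P)$-action on an appropriate basis, (b) differentiate to obtain the action of the basis $x, x'=e, y, y'$ of $\mfrak{gl}(P)$, and (c) match the result against one of the canonical forms $L(\epsilon, a, b)$ from the classification in Section~\ref{glp_irreps}.

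The key structural observation that organizes the argument is that $\chi$ is a character of $\GL(P)$ which factors through a Frobenius-type twist; since we are in characteristic $2$, any such character has zero differential. Hence $\chi$ restricts to the trivial $\mfrak{gl}(P)$-representation, and tensoring any of the other reps with $\chi^n$ does not change the restriction. This reduces the proposition to identifying the restrictions of $\1$, $T_1$, $T_2$, $T_3$, $\xi$, $\xi T_1$, and $\xi T_3$ alone.

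For each remaining representation, the identification proceeds by three numerical checks. First, the underlying object in $\Ver_4^+$ determines $\epsilon$: if it is $P$ then $e$ acts as $1$ (forcing an $L(1,a,b)$), while if it is $\1$ or $2\cdot\1$ then $e=x'$ acts as $0$ (forcing an $L(0,a,b)$). Second, the eigenvalue(s) of $y = 1 \otimes 1^*$ pin down $b$, subject to the identification $L(0,1,b)=L(0,1,b+1)$ already noted. Third, computing the action of $x = 1 \otimes d^*$ on the chosen basis reads off the scalar $a$ (via $x^2 = a$ in the module), and the action of $y' = d \otimes 1^*$ serves as a consistency check. Since each representation is at most two-dimensional, these are all $2\times 2$ matrix computations.

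The main obstacle is bookkeeping rather than anything conceptual: one must carefully translate the highest-weight conventions of \cite{hu_supergroups_2024} into the basis $x, x', y, y'$ used here, and handle the degenerations $L(0,0,b)$ (one-dimensional) versus $L(0,a,b)$ with $a\neq 0$ (two-dimensional) correctly, as well as the coincidence $L(0,1,b)=L(0,1,b+1)$. Once the dictionary is fixed, the verification for each of the seven bullets is a direct calculation, and simplicity of each restricted module follows automatically because the $\mfrak{gl}(P)$-modules $L(\epsilon,a,b)$ were already shown to be simple.
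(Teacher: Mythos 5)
Your proposal follows essentially the same route as the paper: both identify $\mfrak{gl}(P)$ with $(I_1/I_1^2)^*$, differentiate the explicit $\GL(P)$-coactions from \cite{hu_supergroups_2024} to obtain the matrices of $x, x', y, y'$, and match these against the classification $L(\epsilon,a,b)$, with $\chi$ (hence all $\chi^n$-twists) having zero differential. The only cosmetic differences are that the paper deduces the $\xi T_1$ and $\xi T_3$ cases from the rule that tensoring by $\xi$ sends $b \mapsto b+1$ rather than by separate computations, and that your shortcut ``underlying object $P$ forces $e=1$'' should be (and in your plan effectively is) replaced by the direct computation of the $x'$-action, since $e$ is only an idempotent, not a priori a scalar, on a restricted module.
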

\begin{proof}
The Lie algebra of $\GL(P)$ is $(I_1/I_1^2)^*$, $I_1 \subset k[G]$ is the augmentation ideal.
Explicitly, since elements of $P$ are of 
the form $\begin{pmatrix}
  a & a'\\
  b & a + b'
\end{pmatrix}$ where $a$ is invertible,
\begin{equation*}
    k [G] = k [A - 1, B, A^{- 1} -
1, A', B'], I_1 = (A - 1, B, A', B').
\end{equation*}
The Lie algebra of $G$ consists of $\mu \in
\Dist (G)$ with $\mu (1) = 0, \mu (I_1^2) = 0$; the space of such $\mu$ is spanned by $\mu_{A - 1}, \mu_B, \mu_{A'}, \mu_{B'}$; this is isomorphic to
$\mathfrak{gl} (P)$ as above with $x \mapsto \mu_{A'}$, $y \mapsto
\mu_{B'}$, $x' \mapsto \mu_{A - 1}$, $y \mapsto \mu_B$.

Then the $\GL(P)$ representations have the following $\mfrak{gl}(P)$-action:
\begin{itemize}
    \item Trivial representation: The $\GL(P)$-coaction is $v \mapsto 1 \otimes v$, so everything in
$\Lie (G)$ acts by 0. Therefore, it is $L(0, 0, 0)$ as a $\mfrak{gl}(P)$-representation.
\item $T_1$: the $\GL(P)$ coaction is
\begin{align*}
    v &\mapsto (A - 1) \otimes v + 1 \otimes v + B \otimes
v' \\
v' &\mapsto A' \otimes v + (A - 1) \otimes v' + 1 \otimes v' + B'
\otimes v'.
\end{align*}
So $\mu_{A'} \mapsto \begin{pmatrix}
  0 & 1\\
  0 & 0
\end{pmatrix}$, $\mu_{B'} \mapsto \begin{pmatrix}
  0 & 0\\
  0 & 1
\end{pmatrix}$, $\mu_A \mapsto I$, and $\mu_B \mapsto
\begin{pmatrix}
  0 & 0\\
  1 & 0
\end{pmatrix}$. So this is the simple 2-dimensional $A_1$-module with $a =
0, b = 0$, i.e. $L(1, 0, 0)$.
\item $T_2$: the $\GL(P)$ coaction is
\begin{eqnarray*}
  v & \mapsto & (A - 1)^2 \otimes v + 1 \otimes v\\
  &  & + A' B' \otimes w + A' \otimes w + (A - 1) A' \otimes w\\
  w & \mapsto & A' \otimes v + (A - 1) A' \otimes v\\
  &  & + (A - 1)^2 \otimes w + 1 \otimes w + B' \otimes w + (A - 1) B'
  \otimes w + BA' \otimes w
\end{eqnarray*}
with $v' = t w = 0$, so the $\Lie (G)$ action is
\begin{equation*} \mu_{A'} : \begin{pmatrix}
     0 & 1\\
     1 & 0
   \end{pmatrix}, \mu_{B'} : \begin{pmatrix}
     0 & 0\\
     0 & 1
   \end{pmatrix}, \mu_{A - 1} = \mu_B = 0. \end{equation*}
So this is a 2-dimensional simple $A_0$-module with $a = 1$, $b = 0$ or $1$, i.e. $L(0, 1, 0)$, which is the same as $L(0, 1, 1)$.
\item $T_3$: the $\GL(P)$ coaction is
\begin{eqnarray*}
  v & \mapsto & ((A - 1)^3 + (A - 1)^2 + (A - 1) + 1) \otimes v + (A - 1)^2 B
  \otimes v' + B \otimes v'\\
  &  & + (A - 1) A' B' \otimes v' + A' B' \otimes v' + A' \otimes v' + (A
  - 1)^2 A' \otimes v'
\end{eqnarray*}
so
\begin{equation*} \mu_{A'} : \begin{pmatrix}
     0 & 1\\
     1 & 0
   \end{pmatrix}, \mu_{B'} : \begin{pmatrix}
     0 & 0\\
     0 & 1
   \end{pmatrix}, \mu_{A - 1} : I, \mu_B : \begin{pmatrix}
     0 & 0\\
     1 & 0
   \end{pmatrix} . \end{equation*}
This is the 2-dimensional $A_1$-module with $a = 1, b = 0$, i.e. $L(1, 1, 0)$.
\item $\chi$ also restricts to the trivial $\mathfrak{gl}(P)$-representation.
\item $\xi$ (where $g$ acts by $a^4 +
a^3 b' + a^2 ba'$) has $\mu_{B'}$ acting by 1, while the other elements act by 0. So this is the 1-dimensional $A_0$-module with $b = 1$, i.e. $L(0, 0, 1)$.
\end{itemize}

The other representations of $\GL (P)$ are tensor products of the above.
In particular, tensoring by $\xi$ sends $b \mapsto b + 1$, so this sends $T_2$
to itself but $T_1, T_3$ to something different.
\end{proof}
\begin{corollary}
    The irreducible $\GL(P)$ representations restrict to exactly the irreducible $\mfrak{gl}(P)$ representations with parameters in $\mbb{F}_2$.
\end{corollary}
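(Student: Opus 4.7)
The plan is to deduce this corollary directly from the preceding proposition, which already carries out the case-by-case computation of the $\mfrak{gl}(P)$-action on each basic irreducible $\GL(P)$-representation. The key input from outside is the classification of irreducible $\GL(P)$-representations in \cite{hu_supergroups_2024}, which asserts that every such representation has the form $\chi^n$, $T_i \otimes \chi^n$ for $i \in \{1,2,3\}$, or $\xi T_i \otimes \chi^n$ for $i \in \{1,3\}$. I would first invoke this classification so that the problem reduces to a finite enumeration.

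Next I would observe that $\chi$ contributes nothing at the Lie-algebra level: since $\chi$ is a character of $\GL(P)$, its differential is a character of $\mfrak{gl}(P)$, and one checks from the explicit coaction formula that $\mu_{A'}, \mu_{B'}, \mu_{A-1}, \mu_B$ all act by zero on it. Consequently tensoring by any power of $\chi$ does not change the restriction to $\mfrak{gl}(P)$, so the set of restrictions is exactly the seven modules listed in the previous proposition, namely
\begin{equation*}
L(0,0,0),\ L(0,0,1),\ L(0,1,0),\ L(1,0,0),\ L(1,0,1),\ L(1,1,0),\ L(1,1,1).
\end{equation*}
This immediately establishes one direction: every restriction lies among the $L(\epsilon, a, b)$ with $(a,b) \in \mbb{F}_2^2$.

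For the reverse direction, I would enumerate the eight pairs $(\epsilon, a, b) \in \{0,1\}^3$ and verify each is represented in the list above. The only subtlety is the apparent missing case $L(0,1,1)$, which is resolved by the identification $L(0,1,0) = L(0,1,1)$ noted in the previous subsection (the parameter $b$ is only defined up to the shift $b \mapsto b+1$ when $a = 1$). With this observation, every pair in $\mbb{F}_2^2$, together with either value of $\epsilon$, corresponds to exactly one entry of the list, giving the reverse inclusion.

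The main ``obstacle'' is really only the bookkeeping above: making sure that the family $L(0,1,b)$ is not being double-counted and that the classification of $\GL(P)$-irreducibles cited from \cite{hu_supergroups_2024} is truly exhaustive. Neither requires any new calculation, so the proof is essentially a direct corollary of the previous proposition.
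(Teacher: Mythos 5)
Your proof is correct and follows essentially the same route as the paper, which treats the corollary as an immediate consequence of the preceding proposition: the restrictions are read off from that list, powers of $\chi$ restrict trivially, and the identification $L(0,1,0)=L(0,1,1)$ accounts for all of $\mbb{F}_2^2$ in both families. The only blemish is that your restatement of the classification of $\GL(P)$-irreducibles omits $\xi\otimes\chi^n$ (whose restriction $L(0,0,1)$ you nonetheless include), but since your argument defers to the proposition's complete list this slip does not affect the conclusion.
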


\printbibliography
\end{document}